\newtheorem*{definition}{Definition}
\newtheorem{theorem}{Theorem}
\newtheorem{theorem1}{Theorem}
\newtheorem{conjecture2}[theorem1]{Conjecture}
\newtheorem{theorem3}[theorem1]{Theorem}
\newtheorem{lemma}[theorem]{Lemma}
\newtheorem*{fact}{Fact}
\newtheorem{claim}{Claim}
\newtheorem{conjecture}[theorem]{Conjecture}
\newtheorem{proposition}[theorem]{Proposition}
\newtheorem{corollary}[theorem]{Corollary}
\theoremstyle{definition}
\newtheorem{remark}{Remark}
\newcommand{\ud}{\,\mathrm{d}}
\DeclareMathOperator{\Cay}{Cay}
\newcommand{\perm}{\textrm{perm}}
\newcommand{\domgeq}{\trianglerighteq}
\newenvironment{myproof}[1][\proofname]{\proof[#1]}{\endproof}
\newcommand{\round}[1]{\operatorname{round}(#1)}
\newcommand{\beq}[1]{\begin{equation}\label{#1}}
\newcommand{\enq}[0]{\end{equation}}
\begin{document}
\title{A quasi-stability result for dictatorships in $S_n$ }
\author{David Ellis, Yuval Filmus\footnote{Supported by the Canadian Friends of the Hebrew University / University of Toronto Permanent Endowment.}, and Ehud Friedgut\footnote{Supported in part by I.S.F. grant 0398246, and BSF grant 2010247.}}
\date{December 2013}
\maketitle

\begin{abstract}
We prove that Boolean functions on \(S_n\) whose Fourier transform is highly concentrated on the first two irreducible representations of \(S_n\), are close to being unions of cosets of point-stabilizers. We use this to give a natural proof of a stability result on intersecting families of permutations, originally conjectured by Cameron and Ku \cite{cameron}, and first proved in \cite{cameronkuconj}. We also use it to prove a `quasi-stability' result for an edge-isoperimetric inequality in the transposition graph on \(S_n\), namely that subsets of \(S_n\) with small edge-boundary in the transposition graph are close to being unions of cosets of point-stabilizers.
\end{abstract}

\section{Introduction}
In extremal combinatorics, we are typically interested in subsets \(S\) of a finite set \(X\) which satisfy some property, \(P\) say. Often, we wish to determine the maximum or minimum possible size of a subset \(S \subset X\) which has the property \(P\). The maximum or minimum-sized subsets of \(X\) with the property \(P\) are called the {\em extremal} sets.

In the past fifty years, discrete Fourier analysis has been used to solve a number of extremal problems where the set \(X\) may be given the structure of a finite group, \(G\). In this case, given a set \(S\) with the property \(P\), one may consider the {\em characteristic function} \({\bf{1}}_{S}\), and take the Fourier transform of \({\bf{1}}_S\).  (Recall that the characteristic function of a subset \(S \subset G\) is the real-valued function on \(G\) with value \(1\) on \(S\) and \(0\) elsewhere.) If we are lucky, the property \(P\) gives us information about the Fourier transform of \({\bf{1}}_{S}\), which can then be used to obtain a sharp bound on \(|S|\). Often, such proofs tell us that if \(S\) is an extremal set, then the Fourier transform of \({\bf{1}}_{S}\) must be supported on a certain set, \(T\) say; this can then be used to describe the structure of the extremal sets.

Under these conditions, it often turns out that if \(S\) is `{\em almost-extremal}', meaning that it has size close to the extremal size, then the Fourier transform of \({\bf{1}}_S\) is highly concentrated on \(T\). If one can characterize the Boolean functions whose Fourier transform is highly concentrated on \(T\), one can describe the structure of the almost-extremal sets. Sometimes, almost-extremal sets must be close in structure to a genuine extremal set; this phenomenon is known as {\em stability}. 

Characterizing the Boolean functions whose Fourier transform is highly concentrated on \(T\) often turns out to be a hard problem. To date, such a characterization has been obtained in several cases where the group \(G\) is Abelian, using the well-developed theory of Fourier analysis on Abelian groups. The simplest case is that of  dictatorships: Friedgut, Kalai and Naor \cite{fkn} prove that a Boolean function on \(\{0,1\}^n\) whose Fourier transform is close to being concentrated on the first two levels, must be close to a dictatorship (a function determined by just one coordinate). This was useful for Kalai in \cite{KalaiArrow} where he deduced a stability version of Arrow's theorem on social choice functions, namely that if a neutral social choice function has small probability of irrationality, then it must be close to a dictatorship.
 Alon, Dinur, Friedgut and Sudakov \cite{adfs} proved a similar result for $\mathbb{Z}_r^n$ (a result later improved by  Hatami and Ghandehari, \cite{hatami}), and utilized it to describe the large independent sets in powers of a large family of graphs.

In this paper, we obtain a similar result for Boolean functions on \(S_n\). It is easy to see that if $f\colon S_n \to \mathbb{R}$, then the Fourier transform of $f$ is supported on the first two irreducible representations of $S_n$ if and only if it lies in the subspace spanned by the characteristic functions of cosets of point-stabilisers. (For brevity, we refer to the cosets of point-stabilisers as {\em 1-cosets}, and we denote the subspace spanned by their characteristic functions as $U_1$.  Similarly, a {\em $t$-coset} is a coset of the stabilizer of an ordered $t$-tuple of distinct points.) If $f$ is Boolean, i.e. $f\colon S_n \to \{0,1\}$, and $f \in U_1$, then $f$ is the characteristic function of a disjoint union of 1-cosets. (This is somewhat trickier to show; a proof may be found e.g.\ in \cite{tintersecting}.) A disjoint union of 1-cosets is precisely a subset of $S_n$ whose characteristic function is determined by the image of the pre-image of just one coordinate; by analogy with the $\{0,1\}^n$ case, we call these subsets (or their characteristic functions) {\em dictators}.

In this paper, we consider Boolean functions on $S_n$ whose Fourier transform is {\em highly concentrated} on the first two irreducible representations of $S_n$ --- equivalently, Boolean functions which are {\em close} (in Euclidean distance) to the subspace $U_1$. We prove the following `quasi-stability' result.

\begin{theorem}
\label{thm:main}
There exist absolute constants \(C_0,\epsilon_0 >0\) such that the following holds. Let \(\mathcal{A} \subset S_n\) with \(|\mathcal{A}| = c(n-1)!\), where \(c \leq n/2\), and let \(f = {\bf{1}}_{\mathcal{A}} \colon S_n \to \{0,1\}\) be the characteristic function of \(\mathcal{A}\), so that \(\mathbb{E}[f] = c/n\). Let \(f_1\) denote the orthogonal projection of \(f\) onto \(U_1\). If \(\mathbb{E}[(f-f_1)^2] \leq \epsilon c/n\), where \(\epsilon \leq \epsilon_0\), then there exists a Boolean function \(\tilde{f}:S_n \to \{0,1\}\) such that
\begin{equation}\label{eq:expbound} \mathbb{E}[(f-\tilde{f})^2] \leq C_0c^2 (\epsilon^{1/2} + 1/n)/n,\end{equation}
and \(\tilde{f}\) is the characteristic function of a union of \(\round{c}\) 1-cosets of \(S_n\). Moreover, \(|c-\round{c}| \leq C_0c^2 (\epsilon^{1/2} + 1/n)\). (Here, $\round{c}$ denotes the nearest integer to $c$, rounding up if $c \in \mathbb{Z}+\tfrac{1}{2}$.)
\end{theorem}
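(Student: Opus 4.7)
The plan is to first encode $\mathcal A$ by the $n\times n$ matrix $A\in[0,1]^{n\times n}$ with entries $A_{ij}:=|\mathcal A\cap T_{ij}|/(n-1)!$, where $T_{ij}=\{\sigma\in S_n:\sigma(i)=j\}$ is the 1-coset. All row and column sums of $A$ equal $c$. Since $\mathbf 1_{T_{ij}}\in U_1$ and $\mathbb E[f\,\mathbf 1_{T_{ij}}]=A_{ij}/n$, the projection $f_1$ is determined by $A$, and one checks by matching inner products that
\[ f_1(\sigma)=\frac{n-1}{n}\sum_{i=1}^n A_{i,\sigma(i)}-\frac{c(n-2)}{n}.\]
Squaring this and using the uniform two-point marginals $\Pr[\sigma(i)=j,\sigma(k)=\ell]=1/(n(n-1))$ for $i\ne k$, $j\ne \ell$ yields the identity
\[ \mathbb E[(f-f_1)^2]=\frac{nc+(n-2)c^2-(n-1)S_2}{n^2},\qquad S_2:=\sum_{i,j}A_{ij}^2.\]
Pythagoras together with $\mathbb E[f^2]=c/n$ forces $S_2\le(nc+(n-2)c^2)/(n-1)$, with equality precisely when $\mathcal A$ is a union of $c$ 1-cosets sharing a common $i$ or a common $j$. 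The hypothesis therefore says $S_2$ lies within an additive $\epsilon c\cdot n/(n-1)$ of this maximum.

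The main step, and what I expect to be the principal obstacle, is to convert this near-maximality into the existence of a ``dominant'' row or column: an $i^*\in[n]$ (or dually $j^*\in[n]$) with $R_{i^*}:=\sum_j A_{i^*,j}^2\ge c-C_0c^2(\epsilon^{1/2}+1/n)$, so that row $i^*$ of $A$ is close to a $0$--$1$ vector with $\approx c$ ones. Simple pigeonholing on $\sum_i R_i=S_2$ only gives $R_{i^*}\gtrsim S_2/n$, which is far too weak, so I would exploit the detailed structure of the extremal matrices: in a dictator, row $i^*$ is $0$--$1$ while every other row equals $(c-A_{i^*,\cdot})/(n-1)$, so rows $i\ne i^*$ are all identical. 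To force this, I would analyse the cross-correlations $K_{ik}:=\sum_\ell A_{i\ell}A_{k\ell}$ together with the column norms $C_\ell:=\sum_i A_{i\ell}^2$; the identities $\sum_i A_{i\ell}=c$ and $\sum_{ik}K_{ik}=nc^2$ combined with the near-maximality of $S_2$ give quadratic inequalities on the $K_{ik}$ that (after a Cauchy--Schwarz step absorbing the $h$-contribution into an $L^2$ error) force all but one row of $A$ to be approximately equal. That Cauchy--Schwarz is where I expect the exponent $1/2$ on $\epsilon$ to appear.

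Granted the dominant row $i^*$, set $J:=\{j:A_{i^*,j}>1/2\}$ and $\tilde f(\sigma):=\mathbf 1[\sigma(i^*)\in J]$, which is by construction the characteristic function of a union of $|J|$ 1-cosets. The identity $\sum_j A_{i^*,j}=c$ and the bound $\sum_j A_{i^*,j}(1-A_{i^*,j})=c-R_{i^*}\le C_0 c^2(\epsilon^{1/2}+1/n)$, combined with the fact that $A_{i^*,j}(1-A_{i^*,j})\ge \tfrac12\min(A_{i^*,j},1-A_{i^*,j})$, yield
\[ \sum_{j\in J}(1-A_{i^*,j})+\sum_{j\notin J}A_{i^*,j}\le 2C_0 c^2(\epsilon^{1/2}+1/n).\]
The difference $|J|-c=\sum_{j\in J}(1-A_{i^*,j})-\sum_{j\notin J}A_{i^*,j}$ is therefore bounded by the same quantity; provided $\epsilon_0$ is small enough this is $<1/2$, so $|J|=\round c$ and the moreover clause follows. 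Finally, since both $f$ and $\tilde f$ are Boolean,
\[ \mathbb E[(f-\tilde f)^2]=\Pr[\mathcal A\,\triangle\,\{\sigma:\sigma(i^*)\in J\}]=\frac{1}{n}\Bigl(\sum_{j\in J}(1-A_{i^*,j})+\sum_{j\notin J}A_{i^*,j}\Bigr),\]
using $\Pr[\sigma\in\mathcal A\cap T_{i^*j}]=A_{i^*,j}/n$ and $\Pr[\sigma\in T_{i^*j}\setminus\mathcal A]=(1-A_{i^*,j})/n$. This gives the bound~(\ref{eq:expbound}).
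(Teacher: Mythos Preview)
There is a genuine gap at your ``main step'': the existence of a dominant row or column is simply false in general, and it is precisely this failure that makes the theorem a \emph{quasi}-stability result rather than a genuine stability result. Consider $\mathcal A=T_{11}\cup T_{22}$, so $c=2-1/(n-1)$. One checks that $\mathbb E[(f-f_1)^2]=\Theta(1/n^2)$, so $\epsilon=\Theta(1/n)$ and the hypothesis holds with $\epsilon\to 0$. But in the matrix $A$, rows~1 and~2 each have one entry equal to~1 and the rest of order $1/n$, so $R_1,R_2\approx 1$, while $R_i\approx 0$ for $i\ge 3$; no row (and, by symmetry, no column) has $R_{i^*}$ close to $c\approx 2$. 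Your $\tilde f$ would be a dictatorship $\{\sigma:\sigma(i^*)\in J\}$, and the best such approximation to $T_{11}\cup T_{22}$ has symmetric difference $\approx 2(n-1)!$, giving $\mathbb E[(f-\tilde f)^2]\approx 2/n$, whereas the theorem demands $O(c^2(\epsilon^{1/2}+1/n)/n)=O(n^{-3/2})$. The correct $\tilde f$ here is $\mathbf 1_{T_{11}\cup T_{22}}$ itself, a union of two 1-cosets \emph{not} sharing an index.

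The paper's proof avoids this trap by never seeking a dominant row. Instead it works with the centred matrix $b_{ij}=A_{ij}-c/n$ and controls both $\sum b_{ij}^2$ (as you do) and $\sum b_{ij}^3$. The cubic sum is bounded below via an optimisation argument: the nonnegative function $g(\sigma)=\sum_i A_{i,\sigma(i)}$ is $L^2$-close to a two-valued function, and minimising $\mathbb E[g^3]$ under this constraint (plus $\mathbb E[g]=c$) yields $\sum b_{ij}^3\ge c-O(c^2)\epsilon^{1/2}-O(c^2/n)$; this is where the $\epsilon^{1/2}$ arises. Subtracting from the square-sum bound gives $\sum b_{ij}^2(1-b_{ij})\le O(c^2)(\epsilon^{1/2}+1/n)$, which forces each individual $b_{ij}$ to be near $0$ or near $1$. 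The $\approx c$ entries near~1 may lie anywhere in the matrix, and $\tilde f$ is the indicator of the union of the corresponding 1-cosets $T_{i_\ell j_\ell}$. Your second-moment information alone cannot distinguish ``$c$ large entries scattered arbitrarily'' from ``$c$ large entries in one row''; the third moment is what does this.
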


This theorem says that a Boolean function on \(S_n\) (of small expectation) whose Fourier transform is close to being concentrated on the first two irreducible representations of \(S_n\), must be close in structure to the characteristic function of a union of 1-cosets. Equivalently, a (small) subset of \(S_n\), whose characteristic function is close (in Euclidean distance) to \(U_1\), must be close in symmetric difference to a union of \(1\)-cosets.

This statement is not `stability' in the strongest sense; in fact, `genuine stability' does not occur. A `genuine' stability result would say that a subset of $S_n$ whose characteristic function is close to $U_1$ must be close in symmetric difference to a subset of $S_n$ whose characteristic function lies {\em in} $U_1$ --- i.e., close in symmetric difference to a disjoint union of 1-cosets. A union of two {\em non-disjoint} \(1\)-cosets is not close in symmetric difference to any of these, and yet its characteristic function {\em is} close to \(U_1\). Our result says that subsets close to unions of cosets (not necessarily disjoint) are the only possibility. We therefore call it a `quasi-stability' result.

If we restrict our attention to subsets \(\mathcal{A} \subset S_n\) with size close to \((n-1)!\), Theorem \ref{thm:main} says that if the characteristic function \({\bf{1}}_{\mathcal{A}}\) is close to \(U_1\), then \(\mathcal{A}\) must be close in symmetric difference to a single \(1\)-coset. This leads to our first application: a natural proof of the following conjecture of Cameron and Ku \cite{cameronkuconj}.

\begin{conjecture}
\label{conj:weakcameronku}
There exists \(\delta >0\) such that for all \(n \in \mathbb{N}\), the following holds. If \(\mathcal{A} \subset S_n\) is an intersecting family of permutations with \(|\mathcal{A}| \geq (1-\delta)(n-1)!\), then \(\mathcal{A}\) is contained within a 1-coset of \(S_n\).
\end{conjecture}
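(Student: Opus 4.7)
The plan is to reduce Conjecture \ref{conj:weakcameronku} to Theorem \ref{thm:main} via a spectral argument that establishes Fourier concentration on $U_1$, and then to upgrade the approximate structure produced by that theorem to exact containment by using the intersecting property one final time.

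Consider the Cayley graph $\Gamma = \Cay(S_n, D_n)$, where $D_n \subset S_n$ is the set of derangements. Two permutations $\sigma,\pi$ agree somewhere iff $\sigma\pi^{-1} \notin D_n$, so $\mathcal{A}$ intersecting is equivalent to $\mathcal{A}$ being an independent set in $\Gamma$. Since $D_n$ is a union of conjugacy classes, the eigenspaces of $\Gamma$ are the isotypic components of the irreducibles of $S_n$, and the eigenvalue on irreducible $\lambda$ equals $\tfrac{1}{\dim\lambda}\sum_{g\in D_n}\chi_\lambda(g)$. This gives $|D_n|$ on the trivial representation and $-|D_n|/(n-1)$ on the standard $(n{-}1,1)$-representation (whose isotypic component, together with the constants, spans $U_1$); by the known estimates on derangement characters, the latter is the unique minimum eigenvalue, with the next smallest eigenvalue being a factor of $\Omega(n)$ closer to zero. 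The Hoffman bound $|\mathcal{A}|\leq n!\cdot |\lambda_{\min}|/(d-\lambda_{\min})$ then yields $|\mathcal{A}|\leq(n-1)!$, and a stability form of the same argument --- decomposing $f=\mathbf{1}_{\mathcal{A}}$ into its $U_1$-projection and the complementary piece, and exploiting $\langle A_\Gamma f,f\rangle = 0$ --- gives $\mathbb{E}[(f-f_1)^2]=O(\delta/n)$.

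Feeding this into Theorem \ref{thm:main} with $c=|\mathcal{A}|/(n-1)!\in[1-\delta,1]$ and $\epsilon=O(\delta)$ produces a Boolean $\tilde f$ indicating a union of $\round{c}$ 1-cosets with $\mathbb{E}[(f-\tilde f)^2]=O(\delta^{1/2}/n+1/n^2)$. For $\delta<1/2$ we have $\round{c}=1$, so $\tilde f=\mathbf{1}_{T_{i\to j}}$ for a single 1-coset $T_{i\to j}=\{\sigma:\sigma(i)=j\}$, yielding $|\mathcal{A}\triangle T_{i\to j}|=O((\delta^{1/2}+1/n)(n-1)!)$. To bootstrap this to actual containment, suppose for contradiction that some $\pi\in\mathcal{A}$ satisfies $\pi(i)\neq j$. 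Then any $\sigma\in T_{i\to j}$ that is a derangement of $\pi$ must be excluded from $\mathcal{A}$; such $\sigma$ are in bijection with bijections $[n]\setminus\{i\}\to[n]\setminus\{j\}$ that disagree with $\pi$ at every input, numbering $(1/e+o(1))(n-1)!$. Choosing $\delta$ small enough that this exceeds the upper bound on $|T_{i\to j}\setminus\mathcal{A}|$ forces a contradiction, so $\mathcal{A}\subseteq T_{i\to j}$. (A bounded number of small $n$ can be handled separately.)

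The step I expect to require the most care is the quantitative spectral concentration: Theorem \ref{thm:main} is only useful if the Hoffman deficit translates to $\epsilon$ \emph{linear} in $\delta$, not weaker, and this in turn demands that the gap between the $(n{-}1,1)$-eigenvalue of $\Gamma$ and the rest of the spectrum be $\Omega(|D_n|/n)$ in absolute value. Fortunately this is exactly what the standard character bounds on derangements (e.g.\ via the Murnaghan--Nakayama rule, or Renteln's and Ku--Wales's explicit formulas) deliver, so the argument closes with constants comparable to those in Theorem \ref{thm:main} itself.
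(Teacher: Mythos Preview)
Your proposal is correct and follows essentially the same route as the paper: a stability version of Hoffman's bound on the derangement graph (using that $|\lambda_K|=O(d_n/n^2)$, so the gap to $\lambda_{\min}$ is $\Omega(d_n/n)$) gives $\mathbb{E}[(f-f_1)^2]=O(\delta/n)$, Theorem~\ref{thm:main} then yields $|\mathcal{A}\triangle T_{ij}|\leq\phi(n-1)!$ for a single $1$-coset, and the bootstrap via the derangement count $d_{n-1}+d_{n-2}=(1/e+o(1))(n-1)!$ forces $\mathcal{A}\subset T_{ij}$. The paper phrases the final step as bounding $|\mathcal{A}\cap T_{ij}|$ from above rather than $|T_{ij}\setminus\mathcal{A}|$ from below, and handles small $n$ by shrinking $\delta$ and invoking the exact Cameron--Ku theorem, but these are cosmetic differences.
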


(Recall that a family $\mathcal{A} \subset S_n$ is said to be {\em intersecting} if any two permutations in $\mathcal{A}$ agree at some point.) Conjecture \ref{conj:weakcameronku} is a rather strong stability statement for intersecting families of permutations. It was first proved by the first author in \cite{cameronkuconj} using a different method (viz., by obtaining much weaker stability information, and then using the intersecting property to `bootstrap' this information). As suggested by Hatami and Ghandehari \cite{hatami}, progress on the Cameron--Ku conjecture has indeed been linked to a greater understanding of a kind of stability phenomenon for Boolean functions on \(S_n\).

As a second application, we obtain a structural description of subsets of \(S_n\) of various sizes which are almost-extremal for the edge-isoperimetric inequality for \(S_n\). If $\mathcal{A} \subset S_n$, we let $\partial \mathcal{A}$ denote the edge-boundary of $\mathcal{A}$ in the {\em transposition graph}, the Cayley graph on $S_n$ generated by the transpositions. We prove the following.

\begin{theorem}
\label{thm:roughstability}
For each \(c \in \mathbb{N}\), there exists \(n_0(c) \in \mathbb{N}\) and \(\delta_0(c)>0\) such that the following holds. Let \(\mathcal{A} \subset S_n\) with \(|\mathcal{A}| = c(n-1)!\), and with
\[|\partial \mathcal{A}| \leq \frac{|\mathcal{A}|(n!-|\mathcal{A}|)}{(n-1)!} + \delta n |\mathcal{A}|,\]
where \(n \geq n_0(c)\) and \(\delta \leq \delta_0(c)\). Then there exists a family \(\mathcal{B} \subset S_n\) such that \(\mathcal{B}\) is a union of \(c\) 1-cosets of \(S_n\), and
\[|\mathcal{A} \setminus \mathcal{B}| \leq O(c\delta)(n-1)! + O(c^2)(n-2)!.\]
(We may take \(\delta_0(c) = \Omega(c^{-4})\) and \(n_0(c) = O(c^2)\).)
\end{theorem}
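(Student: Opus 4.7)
The plan is to combine a spectral computation with Theorem~\ref{thm:main} and then bootstrap.

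\textbf{Spectral reduction.} The transposition graph is a normal Cayley graph of $S_n$, so its combinatorial Laplacian is diagonal in the isotypic basis; the eigenvalue on the irreducible $V_\lambda$ is $0$ for $\lambda=(n)$, equals $n$ for $\lambda=(n-1,1)$, and is at least $2(n-1)$ for every other $\lambda$ (the next-smallest value $2(n-1)$ being attained at $(n-2,2)$, computed via Murnaghan--Nakayama). Writing $f=\mathbf{1}_\mathcal{A}$ and expanding the Dirichlet form yields
\[|\partial\mathcal{A}| \geq |\mathcal{A}|(n-c) + (n-2)\,n!\,\mathbb{E}[(f-f_1)^2],\]
so the hypothesis forces $\mathbb{E}[(f-f_1)^2] \leq \delta c/(n-2) \leq \epsilon c/n$ with $\epsilon := n\delta/(n-2) \leq 2\delta$ for $n \geq 4$. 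Choosing $\delta_0(c) \leq \epsilon_0/2$, we are in position to apply Theorem~\ref{thm:main}.

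\textbf{Boolean approximation.} Since $c\in\mathbb{N}$ and so $\round c = c$, Theorem~\ref{thm:main} produces a union $\mathcal{B}$ of exactly $c$ 1-cosets with
\[|\mathcal{A} \triangle \mathcal{B}| \leq O(c^2 \sqrt\delta)\,(n-1)! + O(c^2)(n-2)!.\]
This is weaker than the target by a factor of order $c/\sqrt{\delta}$ in the leading term; a bootstrap is needed to pass from $\sqrt\delta$ to $\delta$.

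\textbf{Bootstrap.} Let $X := \mathcal{A}\setminus\mathcal{B}$ and $Y := \mathcal{B}\setminus\mathcal{A}$. First handle the ``clean'' case in which the $c$ cosets of $\mathcal{B}$ share a common index: then $\mathbf{1}_\mathcal{B} \in U_1$, $|\mathcal{B}|=c(n-1)!=|\mathcal{A}|$, $|X|=|Y|$, every $\sigma\notin\mathcal{B}$ has exactly $c$ neighbours in $\mathcal{B}$, and every $\tau\in\mathcal{B}$ has exactly $\binom{n-1}{2}+c-1$ neighbours in $\mathcal{B}$. Expanding $|\partial \mathcal{A}|$ vertex-by-vertex gives
\[|\partial\mathcal{A}|-|\mathcal{A}|(n-c) = (|X|+|Y|)\left(\binom{n-1}{2}+c-1\right) - 2(e_{\mathrm{in}}(X)+e_{\mathrm{in}}(Y)) - 2e(\mathcal{A}\cap\mathcal{B},X).\]
Bounding $e_{\mathrm{in}}(X)$ and $e_{\mathrm{in}}(Y)$ using the edge-isoperimetric inequality on $S_n$ refined with the information that $X\subseteq\mathcal{B}^c$ is already small (from the previous step), and comparing with the hypothesis $\leq \delta c\,n!$, yields $|X|+|Y| \leq O(c\delta)(n-1)! + O(c^2)(n-2)!$. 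The non-clean case follows from the same argument after noting that the overlap defect $\bigl|c(n-1)! - |\mathcal{B}|\bigr|$ is at most $\binom{c}{2}(n-2)!$, which is absorbed into the $O(c^2)(n-2)!$ term.

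The main obstacle is the bootstrap: the naive edge-isoperimetric upper bound on $e_{\mathrm{in}}(X)$ exactly matches the main term $|X|\binom{n-1}{2}$, so one must exploit the containment $X\subseteq\mathcal{B}^c$ together with the Step~2 estimate on $|X|$ to squeeze out a strictly better bound. The constants $\delta_0(c)=\Omega(c^{-4})$ and $n_0(c)=O(c^2)$ are calibrated precisely so that the quadratic correction terms in this refinement remain dominated by the linear ones at the target scale.
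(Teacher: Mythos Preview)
Your spectral reduction and the subsequent application of Theorem~\ref{thm:main} are correct and match the paper exactly (these are Lemma~\ref{lemma:close} and Proposition~\ref{prop:roughstability}). The gap is in the bootstrap.

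You correctly identify the difficulty: in your expansion, the main term $(|X|+|Y|)\bigl(\binom{n-1}{2}+c-1\bigr)$ is cancelled by the trivial upper bounds on $e_{\mathrm{in}}(X)$ and $e_{\mathrm{in}}(Y)$, and you do not actually explain how to beat this. Saying ``exploit the containment $X\subseteq\mathcal{B}^c$ together with the Step~2 estimate on $|X|$'' is not a proof; smallness of $|X|$ alone gives nothing beyond the trivial bound, and the non-clean case is dismissed by hand-waving. The paper's bootstrap is organised differently and contains the missing idea. With $\mathcal{E}=\mathcal{A}\setminus\mathcal{B}$, $\mathcal{M}=\mathcal{B}\setminus\mathcal{A}$ and $\mathcal{N}_i=\mathcal{M}\cap B_i\setminus\bigcup_{j\neq i}B_j$, one writes
\[
|\partial\mathcal{A}|\ \geq\ |\partial\mathcal{B}|+|\partial\mathcal{E}|-2e(\mathcal{E},\mathcal{B})-e(\mathcal{M},S_n\setminus\mathcal{B})+e(\mathcal{M},\mathcal{B}\setminus\mathcal{M}),
\]
and then applies the Diaconis--Shahshahani inequality \emph{twice}: once to $\mathcal{E}$ in $S_n$, giving $|\partial\mathcal{E}|\geq\psi(n-\psi)(n-1)!$, and once to each $\mathcal{N}_i$ inside $B_i\cong T_{n-1}$, giving $e(\mathcal{N}_i,B_i\setminus\mathcal{N}_i)\geq\nu_i(1-\nu_i)(n-1)(n-1)!$. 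The second application is what recovers the loss $e(\mathcal{M},S_n\setminus\mathcal{B})\leq(n-1)|\mathcal{M}|$ that would otherwise eat the gain. After elementary bounds on the cross terms (each $\sigma\notin B_i$ has a unique neighbour in $B_i$), one arrives at the quadratic inequality $\psi(1-\psi)\leq 3c\delta+O(c^2/n)$; the Step~2 bound is then used only to rule out the large root, forcing $\psi\leq 6c\delta+O(c^2/n)$. This handles the general (non-disjoint) $\mathcal{B}$ directly, with the overlap $\binom{c}{2}(n-2)!$ appearing naturally in the estimates for $|\partial\mathcal{B}|$ and $\sum_i\nu_i$; there is no need for a separate ``clean case''.
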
 

Here, the almost-extremal sets include unions of \(1\)-cosets which are not disjoint, whereas the extremal sets consist only of disjoint unions of \(1\)-cosets. We feel that this is a good example of a problem where the class of almost-extremal sets is considerably richer than the class of extremal sets.

This paper is part one of a `trilogy' dealing with results similar to Theorem \ref{thm:main}. In \cite{EFF2}, we deal with {\em balanced} Boolean functions whose Fourier transform is highly concentrated on the first two irreducible representations of $S_n$. We prove that if $f$ is a Boolean function on $S_n$ with expectation bounded away from 0 and 1, with Fourier transform highly concentrated on the first two irreducible representations of $S_n$, then $f$ is close in structure to a dictatorship. Hence, in the balanced case, genuine stability occurs, as opposed to the `quasi-stability' phenomenon for Boolean functions with expectation $O(1/n)$, in the current paper.
In both cases, however, the Boolean functions which are close to $U_1$ are close in structure to unions of 1-cosets.
The reason for the disparity between the balanced case and the sparse case is that a union of $c$ pairwise non-disjoint 1-cosets is $\Theta(c^2/n^2)$-far from $U_1$. In the setting of the current paper, $c = o(n)$ and so $\Theta(c^2/n^2) = o(1)$. By contrast, in the setting of~\cite{EFF2}, $c = \Theta(n)$ and so $\Theta(c^2/n^2) = \Theta(1)$, so a union of 1-cosets can be close to $U_1$ only if it is essentially a disjoint union of 1-cosets.
Our approaches in the two cases are completely different, and to date, we have not been able to come up with a unified approach works for the entire range $1/n \le \mathbb{E}(f) \le 1-1/n$.

The third part of our trilogy, \cite{EFF3}, deals with Boolean functions on $S_n$ whose Fourier transform is highly concentrated on irreducible representations corresponding to partitions of $n$ with first row of length at least $n-t$, or equivalently, Boolean functions which are `close' in Euclidean distance to the subspace of $\mathbb{C}[S_n]$ spanned by all characteristic functions of $t$-cosets. We prove that such a function must be `close' to the characteristic function of a union of $t$-cosets, using methods similar to the ones in this paper. However, the amount of representation theory needed for this makes for a hefty treatise which deserves a separate showcase. We point out that this is analogous to the state of affairs in the theory of Boolean functions on $\{0,1\}^n$. There, the theorems dealing with Boolean functions whose Fourier transform is highly concentrated on sets of size at most $t$, for $t >1$ (e.g.\ \cite{Bourgain}, \cite{FriedgutJunta}, \cite{KindlerSafra}, \cite{NisanSzegedy}, and recently \cite{KindlerODonnell}), tend to be far more complicated than in the $t=1$ case. In the $\{0,1\}^n$ case, such theorems have proven to be quite useful, e.g.\ as an important component in the proof of a `stability' version of the Simonovits-S\'{o}s conjecture \cite{EFF}. We trust that the symmetric-group versions will prove useful too.

The structure of the rest of the paper is as follows. In section 2 we provide some general background and notation. In section 3 we state and prove our main theorem. In section 4 we describe our two applications. Finally, in section 5 we mention some open problems.

\section{Notation and Background}
\subsection{General representation theory}
In this section, we recall the basic notions and results we need from general representation theory. For more background, the reader may consult \cite{serre}.

Let \(G\) be a finite group. A {\em representation of \(G\) over \(\mathbb{C}\)} is a pair \((\rho,V)\), where \(V\) is a finite-dimensional complex vector space, and \(\rho\colon G \to GL(V)\) is a group homomorphism from \(G\) to the group of all invertible linear endomorphisms of \(V\). The vector space \(V\), together with the linear action of \(G\) defined by \(gv = \rho(g)(v)\), is sometimes called a \(\mathbb{C}G\)-{\em module}.  A {\em homomorphism} between two representations \((\rho,V)\) and \((\rho',V')\) is a linear map \(\phi\colon V \to V'\) such that \(\phi(\rho(g)(v)) = \rho'(g)(\phi(v))\) for all \(g \in G\) and \(v \in V\). If \(\phi\) is a linear isomorphism, the two representations are said to be {\em equivalent}, or {\em isomorphic}, and we write \((\rho,V) \cong (\rho',V')\). If \(\dim(V)=n\), we say that \(\rho\) {\em has dimension} \(n\), and we write \(\dim(\rho) = n\).

The representation \((\rho,V)\) is said to be {\em irreducible} if it has no proper subrepresentation, i.e. there is no proper subspace of \(V\) which is \(\rho(g)\)-invariant for all \(g \in G\).

It turns out that for any finite group \(G\), there are only finitely many equivalence classes of irreducible complex representations of \(G\), and {\em any} complex representation of \(G\) is isomorphic to a direct sum of irreducible representations of \(G\). Hence, we may choose a set of representatives \(\mathcal{R}\) for the equivalence classes of complex irreducible representations of \(G\).

If \((\rho,V)\) is a complex representation of \(V\), the {\em character} \(\chi_{\rho}\) of \(\rho\) is the map defined by
\begin{eqnarray*}
\chi_{\rho}\colon  G & \to & \mathbb{C};\\
 \chi_{\rho}(g) &=& \textrm{Tr} (\rho(g)),
\end{eqnarray*}
where \(\textrm{Tr}(\alpha)\) denotes the trace of the linear map \(\alpha\) (i.e. the trace of any matrix of \(\alpha\)). Note that \(\chi_{\rho}(\textrm{Id}) = \dim(\rho)\), and that \(\chi_{\rho}\) is a {\em class function} on \(G\) (meaning that it is constant on each conjugacy-class of \(G\).)

The usefulness of characters lies in the following
\begin{fact}
Two complex representations are isomorphic if and only if they have the same character.
\end{fact}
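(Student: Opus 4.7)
The plan is to prove both directions of the biconditional, with the forward implication being immediate and the reverse being the substantive content. If $(\rho,V) \cong (\rho',V')$ via a linear isomorphism $\phi\colon V \to V'$, then $\rho'(g) = \phi \circ \rho(g) \circ \phi^{-1}$ for every $g \in G$, and since trace is invariant under conjugation, $\chi_{\rho'}(g) = \chi_{\rho}(g)$. This direction needs no machinery.

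For the converse direction, my strategy is to reduce the statement to the claim that the character of a representation determines the multiplicities of its irreducible constituents. Concretely, I would first invoke the complete reducibility statement already recorded in the excerpt to write any representation $(\tau,W)$ as $\tau \cong \bigoplus_{\rho \in \mathcal{R}} m_\rho \rho$, so that $\chi_\tau = \sum_{\rho \in \mathcal{R}} m_\rho \chi_\rho$. Then it suffices to show that the $m_\rho$ can be recovered from $\chi_\tau$ alone, because then if $\chi_\tau = \chi_{\tau'}$ the two decompositions have the same multiplicities and hence $\tau \cong \tau'$.

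The core technical step is therefore to establish the orthogonality relations
\[ \frac{1}{|G|} \sum_{g \in G} \chi_\rho(g) \overline{\chi_{\rho'}(g)} = \begin{cases} 1 & \rho \cong \rho', \\ 0 & \text{otherwise,} \end{cases} \]
for $\rho,\rho' \in \mathcal{R}$. My approach would be the standard averaging/Schur argument: for any linear map $T\colon V \to V'$ between irreducible representations, define the averaged map $\widetilde{T} = \frac{1}{|G|} \sum_{g \in G} \rho'(g^{-1}) \, T \, \rho(g)$, which is an intertwiner by construction. By Schur's lemma, $\widetilde{T} = 0$ if $\rho \not\cong \rho'$, and $\widetilde{T}$ is a scalar multiple of the identity (with scalar $\mathrm{Tr}(T)/\dim\rho$) when $V=V'$ and $\rho = \rho'$. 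Taking $T$ to range over matrix units and reading off traces produces the orthogonality relations in matrix-entry form, from which the character version follows by summing diagonal entries. Granted orthogonality, we obtain $m_\rho = \langle \chi_\tau, \chi_\rho \rangle$, completing the proof.

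The main obstacle is Schur's lemma together with the averaging trick, which is the one place where the finiteness of $G$ and the algebraic closure of $\mathbb{C}$ are both essential; once this is in hand, the passage from character equality to isomorphism is purely formal bookkeeping using unique decomposition into isotypic components.
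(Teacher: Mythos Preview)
Your argument is correct and is the standard textbook proof: the forward direction is immediate from trace invariance under conjugation, and for the converse you use complete reducibility together with the first orthogonality relations (derived via the averaging operator and Schur's lemma) to recover the irreducible multiplicities from the character.

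However, there is nothing to compare against: the paper does not prove this statement. It is stated as a \emph{Fact} in the background section on general representation theory, with the reader referred to Serre's textbook \cite{serre} for details. So your proposal supplies a proof where the paper deliberately omits one; your approach is exactly the classical one found in Serre and other standard references.
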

\begin{definition}
Let \(\mathcal{R}\) be a complete set of non-isomorphic, irreducible representations of \(G\), i.e. containing one representative from each isomorphism class of irreducible representations of \(G\). Let \(f:G \to \mathbb{C}\) be a complex-valued function on \(G\). The {\em Fourier transform} of \(f\) is defined by
\begin{equation}
 \Hat{f}(\rho) = \frac {1}{|G|} \sum_{\sigma \in G} f(\sigma)\rho(\sigma) \quad (\rho \in \mathcal{R});
\end{equation}
it can be viewed as a map from $\mathcal{R}$ to \(\textrm{End}(V)\), the space of all linear endomorphisms of $V$.
\end{definition}

Let \(G\) be a finite group. Let \(\mathbb{C}[G]\) denote the vector space of all complex-valued functions on \(G\). Let \(\mathbb{P}\) denote the uniform probability measure on \(G\):
\[\mathbb{P}(\mathcal{A}) = |\mathcal{A}|/|G|\quad (\mathcal{A} \subset G).\]

We equip \(\mathbb{C}[G]\) with the inner product induced by the uniform probability measure on \(G\):
\[\langle f,g \rangle = \frac{1}{|G|}\sum_{\sigma \in G} f(\sigma) \overline{g(\sigma)}.\]
Let
\[||f||_2 = \sqrt{\mathbb{E}[f^2]} = \sqrt{\frac{1}{|G|}\sum_{\sigma \in G} |f(\sigma)|^2}\]
denote the induced Euclidean norm.

For each irreducible representation \(\rho\) of \(G\), let
\[U_{\rho} := \{f \in \mathbb{C}[G]:\ \hat{f}(\pi) = 0\ \textrm{for all irreducible representations } \pi \ncong \rho\}.\]

We refer to this as `the subspace of functions whose Fourier transform is supported on the irreducible representation \(\rho\)'. Note that if \(\rho' \cong \rho\), then \(U_{\rho'} = U_{\rho}\). It turns out that the \(U_{\rho}\)'s are pairwise orthogonal, and that
\[\mathbb{C}[G] = \bigoplus _{\rho \in \mathcal{R}} U_{\rho}.\]
Moreover, we have $\dim(U_\rho) = (\dim(\rho))^2$ for all $\rho$. 

For each \(\rho \in \mathcal{R}\), let \(f_{\rho}\) denote orthogonal projection onto the subspace \(U_{\rho}\). Then we have
\begin{equation}\label{eq:parseval}||f||_2^2 = \sum_{\rho \in \mathcal{R}} ||f_\rho||_2^2.\end{equation}

\subsection*{Background on the representation theory of \(S_n\).}
\label{sec:backgroundS_n}
\begin{definition}
A \emph{partition} of \(n\) is a non-increasing sequence of integers summing to \(n\), i.e. a sequence $\lambda = (\lambda_1, \ldots, \lambda_k)$ with \(\lambda_{1} \geq \lambda_{2} \geq \ldots \geq \lambda_{k}\) and \(\sum_{i=1}^{k} \lambda_{i}=n\); we write \(\lambda \vdash n\). For example, \((3,2,2) \vdash 7\).
\end{definition}
The following two orders on partitions of \(n\) will be useful.

\begin{definition}
  (Dominance order) Let $\lambda = (\lambda_1, \ldots, \lambda_r)$ and $\mu = (\mu_1,
  \ldots, \mu_s)$ be partitions of $n$. We say that $\lambda \domgeq
  \mu$ ($\lambda$ {\em dominates} $\mu$) if $\sum_{j=1}^{i} \lambda_i \geq \sum_{j=1}^{i} \mu_i \ \forall i$ (where we define \(\lambda_{i} = 0 \ \forall i > r,\ \mu_{j} = 0 \ \forall j > s\)).
\end{definition}

It is easy to see that this is a partial order. 

\begin{definition}
  (Lexicographic order) Let $\lambda = (\lambda_1, \ldots, \lambda_r)$ and $\mu = (\mu_1,
  \ldots, \mu_s)$ be partitions of $n$. We say that $\lambda > \mu$ if \(\lambda_{j} > \mu_{j}\), where \(j = \min\{i \in [n]:\ \lambda_{i} \neq \mu_{i}\}\).
\end{definition}
It is easy to see that this is a total order which extends the dominance order.

It is well-known that there is an explicit 1-1 correspondence between irreducible representations of \(S_{n}\) (up to isomorphism) and partitions of \(n\). The reader may refer to \cite{sagan} for a full description of this correspondence, or to the paper \cite{tintersecting} for a shorter description.

For each partition \(\alpha\), we write \([\alpha]\) for the corresponding isomorphism class of irreducible representations of \(S_n\), and we write \(U_{\alpha} = U_{[\alpha]}\) for the vector space of complex-valued functions on \(\Gamma\) whose Fourier transform is supported on \([\alpha]\). Similarly, if \(f \in \mathbb{C}[S_n]\), we write \(f_{\alpha}\) for the orthogonal projection of \(f\) onto \(U_{\alpha}\).

We will be particularly interested in the first two irreducible representations of \(S_n\) (under the lexicographic order on partitions). The first, \([n]\), is the trivial representation; \(U_{[n]}\) is the subspace of \(\mathbb{C}[S_n]\) consisting of the constant functions. The second may be obtained as follows.

The {\em permutation representation} \(\rho_{\textrm{perm}}\) is the representation corresponding to the permutation action of \(S_n\) on \(\{1,2,\ldots,n\}\). It turns out that \(\rho_{\perm}\) decomposes into a direct sum of a copy of the trivial representation \([n]\) and a copy of \([n-1,1]\), the second irreducible representation of \(S_n\).

As observed in \cite{tintersecting}, we have
\[U_{(n)} \oplus U_{(n-1,1)} = \textrm{Span}\{{\bf{1}}_{T_{ij}}:\ i,j \in [n]\},\]
where
\[T_{ij} = \{\sigma \in S_n: \sigma(i)=j\}.\]
The \(T_{ij}\)'s are the cosets of the point-stabilizers in \(S_n\); for brevity, we call them the {\em 1-cosets} of \(S_n\). We write
\[U_1 := U_{(n)} \oplus U_{(n-1,1)} = \textrm{Span}\{{\bf{1}}_{T_{ij}}:\ i,j \in [n]\}.\]
If \(f \in \mathbb{C}[S_n]\), we will write \(f_1\) for the orthogonal projection of \(f\) onto the subspace \(U_1\); note that \(f_1 = f_{(n)}+f_{(n-1,1)}\).

Similarly, if $t >1$, and if $I$ and $J$ are ordered $t$-tuples of distinct elements of $[n]$, then we write
$$T_{IJ} := \{\sigma \in S_n:\ \sigma(I)=J\}.$$
We call the $T_{IJ}$'s the {\em $t$-cosets} of $S_n$, and we define
$$U_t: = \textrm{Span}\{{\bf{1}}_{T_{IJ}}:\ I,J \text{ are ordered }t\text{-tuples of distinct elements of }[n]\}.$$

Recall the following theorem from \cite{tintersecting}, which completely characterizes the Boolean functions in \(U_1\).
\begin{theorem}
\label{thm:characterization}
If \(\mathcal{A} \subset S_n\) has \({\bf{1}}_{\mathcal{A}} \in U_1\), then \(\mathcal{A}\) is a disjoint union of 1-cosets of \(S_n\).
\end{theorem}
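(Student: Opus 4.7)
The hypothesis $\mathbf{1}_\mathcal{A}\in U_1=\mathrm{Span}\{\mathbf{1}_{T_{ij}}\}$ allows us to write
\[
\mathbf{1}_\mathcal{A}(\sigma)=\sum_{i,j} a_{ij}\,\mathbf{1}[\sigma(i)=j]=\sum_i a_{i,\sigma(i)}
\]
for some real $n\times n$ matrix $A$, unique modulo the $(2n-2)$-dimensional gauge freedom $a_{ij}\mapsto a_{ij}+r_i+s_j$ with $\sum r+\sum s=0$. The plan is to combine this freedom with the Boolean condition $\sum_i a_{i,\sigma(i)}\in\{0,1\}$ (for every $\sigma\in S_n$) to force $A$ into a rigid combinatorial form. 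The cases $n\le 2$ are trivial, so assume $n\ge 3$.

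First, since translating $\mathcal{A}$ by a fixed permutation preserves the property of being a disjoint union of $1$-cosets (a translate of $T_{ij}$ is again some $T_{i'j}$ or $T_{ij'}$), one may assume $\mathrm{id}\in\mathcal{A}$, so $\sum_i a_{ii}=1$. A short linear-algebra check shows that the gauge freedom then suffices to zero out the off-diagonal entries of the first row and first column of $A$; after this normalization $a_{11}$ and the lower diagonal $(a_{ii})_{i\ge 2}$ are uniquely determined. Evaluating $\sum_i a_{i,\sigma(i)}$ on the transposition $\sigma=(1,k)$ for $k\ge 2$ yields $a_{11}+a_{kk}\in\{0,1\}$, which combined with $\sum_i a_{ii}=1$ forces every lower-diagonal entry $a_{kk}$ to lie in $\{0,1\}$. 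Next, evaluating on the cycle $\sigma=(1,k_1,k_2,\ldots,k_r)$ with distinct $k_j\in\{2,\ldots,n\}$ yields the path inequality
\[
a_{k_1,k_2}+a_{k_2,k_3}+\cdots+a_{k_{r-1},k_r}\;\in\;\{-c_r,\,1-c_r\},\qquad c_r:=\sum_{j=1}^{r} a_{k_j,k_j}\in\{0,1\}.
\]
In particular, specialising to $r=2$ pins every off-diagonal entry of the lower block $B=(a_{ij})_{i,j\ge 2}$ to $\{-1,0,1\}$.

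The combinatorial heart of the argument is a rigidity lemma. Let $E\subseteq\{2,\ldots,n\}^2$ be the set of positions where $a_{ij}=1$ in the lower block. Since $c_r\le 1$, the path inequality forces the path sum to be $\le 1$ along every directed path on distinct vertices in $\{2,\ldots,n\}$; hence $E$ cannot contain two entries $(i,j),(k,\ell)$ with $i\ne k$ and $j\ne\ell$, because in each of the subcases $j=k$, $i=\ell$, or $\{i,j,k,\ell\}$ all distinct one exhibits an explicit path ($i\to j\to\ell$, $k\to i\to j$, or $i\to j\to k\to\ell$ respectively) whose sum is $\ge 2$. Therefore all $1$-entries of $E$ share a common row or a common column. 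A short case analysis on $a_{11}\in\{0,1\}$ and on the possible diagonal entry $a_{\ell_0,\ell_0}=1$ (which arises when $a_{11}=0$ and $\sum_{i\ge 2}a_{ii}=1$) then shows that $\mathcal{A}$ equals either $\bigcup_{j\in J}T_{i_0,j}$ or $\bigcup_{i\in I}T_{i,j_0}$, which is automatically a disjoint union of $1$-cosets; undoing the initial translation yields the claim.

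The main obstacle will be the bookkeeping around the distinguished index $1$: the inequalities coming from cycles that avoid $1$ are weaker than those passing through $1$, and the case $a_{11}=0$, $\sum_{i\ge 2}a_{ii}=1$ requires separate analysis, since then a diagonal entry $a_{\ell_0,\ell_0}=1$ can coexist with off-diagonal $1$-entries in row $\ell_0$ or in column $\ell_0$ (but not both --- evaluating on the transposition $(k,\ell_0)$ or the $4$-cycle $(1,k,\ell_0,j)$ rules out the mixed configuration). This bookkeeping is delicate but purely combinatorial, and needs no additional representation-theoretic input beyond the identification $U_1=\mathrm{Span}\{\mathbf{1}_{T_{ij}}\}$.
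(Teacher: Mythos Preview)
The paper does not actually prove this theorem; it is only quoted from \cite{tintersecting}, so there is no in-paper argument to compare against. Your approach is an attempt at a direct, elementary proof, and the overall strategy (write $\mathbf 1_{\mathcal A}(\sigma)=\sum_i a_{i,\sigma(i)}$, fix a gauge, and read off rigidity from evaluations at well-chosen permutations) is reasonable. However, the specific execution has a genuine gap.

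Your claim that ``$a_{11}+a_{kk}\in\{0,1\}$, which combined with $\sum_i a_{ii}=1$ forces every lower-diagonal entry $a_{kk}$ to lie in $\{0,1\}$'' is false. Take $\mathcal A=S_n$ (certainly a disjoint union of $1$-cosets, namely $\bigcup_j T_{1j}$). Starting from the obvious matrix $a_{1j}=1$, $a_{ij}=0$ otherwise, and applying your gauge (zeroing out the off-diagonal first row and column with $\sum r+\sum s=0$), a short computation gives
\[
a_{11}=-\tfrac{1}{n-2},\qquad a_{k\ell}=\tfrac{1}{n-2}\ \ \text{for all }k,\ell\ge 2.
\]
Here $a_{11}+a_{kk}=0\in\{0,1\}$ and $\sum_i a_{ii}=1$, yet $a_{kk}=\tfrac{1}{n-2}\notin\{0,1\}$. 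The same example refutes the subsequent claim that the off-diagonal entries of the lower block lie in $\{-1,0,1\}$, and therefore the hypothesis ``$c_r\in\{0,1\}$'' on which your path inequality and the rigidity lemma rest. Since the rigidity lemma is the combinatorial heart of the argument, the proof as written does not go through.

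The underlying problem is that your chosen normalisation does not force integrality of the matrix entries; it merely pins each $a_{kk}$ to one of the two values $-a_{11}$ or $1-a_{11}$, with $a_{11}$ itself undetermined. Any repair would have to either handle these fractional cases explicitly (the case $a_{11}=-1/(n-2)$ above corresponds exactly to $\mathcal A=S_n$, but ruling out other fractional configurations looks no easier than the original problem), or abandon this gauge in favour of an argument that tracks $\mathcal A$ itself rather than a particular matrix representing it.
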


\begin{remark}
\label{rem:dict}
If $\mathcal{A} \subset S_n$ is a disjoint union of 1-cosets of $S_n$, then we either have
$$\mathcal{A} = \bigcup_{j \in J}T_{ij}$$
for some $i \in [n]$ and some $J \subset [n]$, or
$$\mathcal{A} = \bigcup_{i \in I}T_{ij}$$
for some $j \in [n]$ and some $I \subset [n]$. Hence, $\boldsymbol{1}_{\mathcal{A}}$ must be determined by the image or preimage of a single element. We may therefore call $\boldsymbol{1}_{\mathcal{A}}$ a {\em dictatorship}, by analogy with the $\{0,1\}^n$ case, hence the title of this paper.
\end{remark}

\begin{remark}
It is in place to remark that if $t \geq 2$, then a Boolean function in $U_t$ is not necessarily the characteristic function of a union of $t$-cosets. Theorem 27 in \cite{tintersecting} states that a Boolean function in $U_t$ is the characteristic function of a disjoint union of $t$-cosets, but this is false for $t \geq 2$; a counterexample, and the error in the proof, is pointed out by the second author in \cite{F-note}. A counterexample when $t=2$ is as follows. Let $n \geq 8$. For any permutation $\pi \in S_n$, define $x = x(\pi) \in \{0,1\}^4$ by $x_i = \boldsymbol{1}_{\{\pi(i) \in [4]\}}$, and consider the function
$$f:S_n \to \{0,1\}; \quad \pi \mapsto \boldsymbol{1}_{\{x_1 \geq x_2 \geq x_3 \geq x_4 \text{ or } x_1 \leq x_2 \leq x_3 \leq x_4\}}.$$
It can be checked that $f \in U_2$, but the value of $f$ clearly cannot be determined by fixing the images of at most two elements of $[n]$, so neither $f$ nor $1-f$ is a union of 2-cosets. It is easy to use $f$ to construct a counterexample for each $t \geq 3$, by considering a product of $f$ with the characteristic function of the pointwise stabilizer of a $(t-2)$-set. We note that the main application of Theorem 27 in \cite{tintersecting} was to characterize (for large $n$) the $t$-intersecting families in $S_n$ of maximum size (i.e., to characterize the cases of equality in the Deza-Frankl conjecture); fortunately, this characterization follows immediately e.g.\ from the Hilton-Milner type result of the first author in \cite{tstability}, where the proof does not depend on Theorem 27 in \cite{tintersecting} (and indeed predates the latter).
\end{remark}

Our main theorem describes what happens when \({\bf{1}}_{\mathcal{A}}\) is {\em near} \(U_1\). We will show that if \(f\colon S_n \to \{0,1\}\) is a Boolean function on \(S_n\) such that
\[\mathbb{E}[(f - f_1)^2]\]
is small, then there exists a Boolean function \(h\) such that
\[\mathbb{E}[(f-h)^2]\]
is small, and \(h\) is the characteristic function of a union of 1-cosets of \(S_n\). Note that, by (\ref{eq:parseval}), we have
\[\mathbb{E}[(f-f_1)^2] = ||f-f_1||_2^2 = \sum_{\alpha \neq (n),(n-1,1)} ||f_\alpha||^2 = \textrm{dist}(f,U_1)^2,\]
where \(\textrm{dist}\) denotes the Euclidean distance.

Our proof relies on considering the first, second and third moments of a non-negative function \(g\) which is an affine shift of the projection \(f_1\). One may compare this with the proofs of the Abelian analogues in \cite{adfs} and \cite{fkn}, where the fourth moment is considered in order to obtain structural information.

Throughout, if \(u\) and \(v\) are functions of several variables (e.g. \(n,c,\epsilon_1\)), the notation \(u = O(v)\) will mean that there exists an absolute constant \(C\) (not depending upon any of the variables) such that \(|u| \leq C|v|\) pointwise. Similarly, the notation \(u = \Omega(v)\) will mean that there exists a universal constant \(C>0\) such that \(|u| \geq C|v|\) pointwise. As usual, $\round x$ will denote $x$ rounded to the nearest integer, rounding up if $x \in \mathbb{Z}+\tfrac{1}{2}$.

\newpage

\section{The quasi-stability theorem}
In this section, we prove our main `quasi-stability' theorem, Theorem \ref{thm:main}.
\begin{theorem1}
There exist absolute constants \(C_0,\epsilon_0 >0\) such that the following holds. Let \(\mathcal{A} \subset S_n\) with \(|\mathcal{A}| = c(n-1)!\), where \(c \leq n/2\), and let \(f = {\bf{1}}_{\mathcal{A}} \colon S_n \to \{0,1\}\) be the characteristic function of \(\mathcal{A}\), so that \(\mathbb{E}[f] = c/n\). Let \(f_1\) denote orthogonal projection of \(f\) onto \(U_1 = U_{(n)} \oplus U_{(n-1,1)}\). If \(\mathbb{E}[(f-f_1)^2] \leq \epsilon c/n\), where \(\epsilon \leq \epsilon_0\), then there exists a Boolean function \(\tilde{f}:S_n \to \{0,1\}\) such that
\begin{equation}\label{eq:expbound} \mathbb{E}[(f-\tilde{f})^2] \leq C_0c^2 (\epsilon^{1/2} + 1/n)/n,\end{equation}
and \(\tilde{f}\) is the characteristic function of a union of \(\round{c}\) 1-cosets of \(S_n\). Moreover, \(|c-\round{c}| \leq C_0c^2 (\epsilon^{1/2} + 1/n)\).
\end{theorem1}
\begin{remark}
Observe that Theorem \ref{thm:main} is non-trivial only if $\epsilon = O(c^{-2})$. Unfortunately, it does not imply Theorem \ref{thm:characterization} when we take $\epsilon = 0$, due to the presence of the `extra' term $1/n$ in the right-hand side of (\ref{eq:expbound}). We conjecture in Section \ref{sec:conc} that this term can be removed (see Conjecture \ref{conj:strengthening}).
\end{remark}

While the ideas behind the proof are quite simple, the proof itself is rather long and technical. So before presenting the actual proof, we give an overview.

\begin{myproof}[Proof overview.]
The proof concentrates on analysing the matrix $B = (b_{ij})_{i,j \in [n]}$ defined by
\[b_{ij} = \frac{|\mathcal{A} \cap T_{ij}|}{(n-1)!} - \frac{|\mathcal{A}|}{n!} = n\langle f, {\bf{1}}_{T_{ij}}\rangle - \langle f,\boldsymbol{1}\rangle,\]
where \(\boldsymbol{1}\) denotes the constant function with value $1$. 

The $b_{ij}$'s turn out to be quite informative. If $\mathcal{A}$ contains $T_{ij}$, and $c = o(n)$, then $b_{ij}$ is close to $1$, whereas if $\mathcal{A} = T_{kl}$, and $(k,l) \not = (i,j)$, then $b_{ij}$ is close to $0$. This is illustrated by the following example, which is a good one to keep in mind while reading the proof overview.

If $\mathcal{A}$ is a disjoint union of $c$ 1-cosets (a dictatorship), then $B$ takes one of the following forms:
\begin{equation} \label{eq:disjoint-cosets}
\begin{aligned}
& \mspace{15mu} \begin{array}{c}
\overbrace{\hphantom{-\tfrac{n-c}{n(n-1)}\kern2\tabcolsep\cdots\kern1\tabcolsep-\tfrac{n-c}{n(n-1)}}}^{c}
\end{array} \\[-8pt]
&\begin{pmatrix}
1-\frac{c}{n} & \cdots & 1-\frac{c}{n}& -\frac{c}{n} & \cdots & -\frac{c}{n} \\
-\frac{n-c}{n(n-1)} & \cdots & -\frac{n-c}{n(n-1)} & \frac{c}{n(n-1)} & \cdots & \frac{c}{n(n-1)} \\
\vdots && \vdots & \vdots && \vdots \\
-\frac{n-c}{n(n-1)} & \cdots & -\frac{n-c}{n(n-1)} & \frac{c}{n(n-1)} & \cdots & \frac{c}{n(n-1)}
\end{pmatrix}
\end{aligned}
\end{equation}
or
\begin{equation}
\begin{array}{c}
c \left\{ \vphantom{\begin{matrix} -\frac{n-c}{n(n-1)} \\ \vdots \\ -\frac{n-c}{n(n-1)} \end{matrix}}
\right. \\ \vphantom{\begin{matrix} \frac{c}{n(n-1)} \\ \vdots \\ \frac{c}{n(n-1)} \end{matrix}}
\end{array} \hspace{-10pt}
\begin{pmatrix}
1-\frac{c}{n} & -\frac{n-c}{n(n-1)} & \cdots & -\frac{n-c}{n(n-1)} \\
\vdots & \vdots && \vdots \\
1-\frac{c}{n} & -\frac{n-c}{n(n-1)} & \cdots & -\frac{n-c}{n(n-1)} \\
-\frac{c}{n} & \frac{c}{n(n-1)} & \cdots & \frac{c}{n(n-1)} \\
\vdots & \\
-\frac{c}{n} & \frac{c}{n(n-1)} & \cdots & \frac{c}{n(n-1)}
\end{pmatrix}.
\end{equation}

Note that in both the above matrices, the sum of the squares of the entries is approximately $c$, and also the sum of the cubes of the entries is approximately $c$. Our first step will be to show that, under the hypotheses of the theorem, the same is true for $B$. This in turn will enable us to show that $B$ contains $m$ entries which are close to $1$, where $m \approx c$, and all other entries are close to $0$.

Rather than working directly with $f_1$, it turns out to be easier to work with the function
\[h = \sum_{i,j}b_{ij}{\bf{1}}_{T_{i,j}},\]
which is an affine shift of \(f_1\). This is because the second and third moments of $h$ are nicely related to the $b_{ij}$'s, whereas the same is not true of $f_1$. Indeed, it turns out (see Lemma \ref{lemma:h2h3}) that
\[\mathbb{E}[h^2] = \frac{1}{n-1} \sum_{i,j}b_{ij}^2\]
and
\[\mathbb{E}[h^3] = \frac{n}{(n-1)(n-2)} \sum_{i,j} b_{ij}^3.\]
The bound on $\mathbb{E}[(f-f_1)^2]$ gives us a bound on \(\mathbb{E}[h^2]\), and hence a bound on $\sum_{i,j}b_{ij}^2$. To obtain information about \(\mathbb{E}[h^3]\), it is helpful to consider another affine shift of \(f_1\), namely the function
\[g = \sum_{i,j} \frac{|\mathcal{A} \cap T_{ij}|}{(n-1)!} {\bf{1}}_{T_{ij}} = \frac{n}{n-1} f_1 + \frac{n-2}{n-1} c,\]
which is non-negative. We use the bound on \(\mathbb{E}[(f-f_1)^2]\), together with the fact that \(f\) is Boolean, to obtain a lower bound on \(\mathbb{E}[g^3]\), which translates to a lower bound on \(\mathbb{E}[h^3]\), finally giving us a lower bound on \(\sum_{i,j}b_{ij}^3\).

Let $F(n,c)$ and $G(n,c)$ denote respectively the sum of the squares and the sum of the cubes of the entries of the matrix (\ref{eq:disjoint-cosets}). We obtain
\beq{1} F(n,c) - c(1+\tfrac{1}{n-1})\epsilon \leq \sum_{i,j} b_{ij}^2 \leq F(n,c),
\enq
and
\beq{2} \sum_{i,j} b_{ij}^3 \geq G(n,c)-\tfrac{27}{4} (3c^2+2c) \epsilon_1^{1/2}.
\enq

Subtracting (\ref{2}) from (\ref{1}), we deduce that 
\[ \sum_{i,j} b_{ij}^2 (1 - b_{ij}) \leq O(c^2)\sqrt{\epsilon} + O(c/n). \]
This means that each \(b_{ij}\) is either very close to~$1$ or very close to~$0$. Since \(\sum_{i,j}b_{ij}^2\) is close to $F(n,c) \approx c$, it follows that there are roughly~$c$ entries which are very close to~$1$. (This implies that $c$ must be close to an integer, \(m\) say.) These entries correspond to \(m\) 1-cosets of \(S_n\), whose union is almost contained within \(\mathcal{A}\). These 1-cosets need not be disjoint, but provided \(c = o(n)\), their union has size roughly \(c(n-1)!\) (the error is of order $c^2(n-2)!$), so it gives a good approximation to \(\mathcal{A}\). This will complete the proof. 
\end{myproof}

\begin{proof}[Proof of Theorem \ref{thm:main}:]
First, note that for any absolute constant \(n_0\), we may choose \(C_0\) sufficiently large that the conclusion of the theorem holds for all \(n \leq n_0\). Hence, we may assume throughout that \(n > n_0\), for any fixed \(n_0 \in \mathbb{N}\).

Let
\[a_{ij} = \frac{|\mathcal{A} \cap T_{ij}|}{(n-1)!},\]
and let
\[b_{ij} = a_{ij}-c/n.\]
Let \(B\) denote the matrix \((b_{ij})_{i,j \in [n]}\). Note that
\[a_{ij} = n\langle f,{\bf{1}}_{T_{ij}}\rangle,\]
so
\[b_{ij} = n\langle f,{\bf{1}}_{T_{ij}}\rangle-c/n.\]
Moreover,
\begin{equation}\label{eq:cbistochastic}\sum_{j=1}^{n}b_{ij} = 0\ \textrm{for all }i \in [n],\quad \textrm{and} \quad \sum_{i=1}^{n} b_{ij}=0\ \textrm{for all }j \in [n],\end{equation}
i.e. the matrix \(B\) has all its row and column sums equal to \(0\).

Instead of working directly with the function \(f_1\), it will be convenient to work with the functions
\begin{equation}\label{eq:defg}g = \sum_{i,j}a_{ij}{\bf{1}}_{T_{ij}}\end{equation}
and
\begin{equation}\label{eq:defh}h = \sum_{i,j}b_{ij}{\bf{1}}_{T_{ij}}.\end{equation}
These are both affine shifts of \(f_1\); indeed, we have
\begin{equation}
\label{eq:lincombg}
g = n\frac{f_1+(n-2)\mathbb{E}[f_1]}{n-1} = (1+\tfrac{1}{n-1})f_1 + (1-\tfrac{1}{n-1})c,
\end{equation}
since the functions on both sides lie in \(U_1\), and they both have the same inner product with \({\bf{1}}_{T_{ij}}\), for every \(i,j \in [n]\). (Recall that \(U_1 = \textrm{Span}\{{\bf{1}}_{T_{ij}}:\ i,j \in [n]\}\).)
Observe that
\begin{equation}
\label{eq:lincombh}
h = g-c = n\frac{f_1+(n-2)\mathbb{E}[f_1]}{n-1} -c= (1+\tfrac{1}{n-1})f_1 -\tfrac{c}{n-1}.
\end{equation}

We now proceed to translate the information we know about \(f_1\) to information about \(h\); this in turn will give us information about the matrix \(B=(b_{ij})\). We have \(\mathbb{E}[f_1] = \mathbb{E}[f] = c/n\), and therefore
\[\mathbb{E}[g] = c,\]
so
\[\mathbb{E}[h]=0.\]
Write \(\mathbb{E}[(f-f_1)^2] = \epsilon_1 c/n\); by assumption, \(\epsilon_1 \leq \epsilon\). Since \(f_1\) is an orthogonal projection of \(f\), we have
\begin{equation}\label{eq:expectationf12} \mathbb{E}[f_1^2] = \mathbb{E}[f^2] - \mathbb{E}[(f-f_1)^2] = \mathbb{E}[f] - \mathbb{E}[(f-f_1)^2] = (1-\epsilon_1) c/n.\end{equation}
From (\ref{eq:lincombg}), we have
\[g^2 = (\tfrac{n}{n-1})^2(f_1^2 + 2(n-2)f_1 \mathbb{E}[f_1]+ (n-2)^2 (\mathbb{E}[f_1])^2).\]
Taking expectations, we obtain
\[\mathbb{E}[g^2] = (\tfrac{n}{n-1})^2(\mathbb{E}[f_1^2]+n(n-2)(\mathbb{E}[f_1])^2).\]
Substituting \(\mathbb{E}[f_1] = c/n\) and (\ref{eq:expectationf12}) to this expression yields:
\begin{equation}\label{eq:g2bound}\mathbb{E}[g^2] = c^2\left(1-\tfrac{1}{(n-1)^2}\right) + \left(1+\tfrac{1}{n-1}\right)^2 \tfrac{c}{n}(1-\epsilon_1).\end{equation}
Since \(h = g-\mathbb{E}[g]\), we obtain
\begin{equation}\label{eq:h2bound}\mathbb{E}[h^2] = \mathbb{E}[g^2] - (\mathbb{E}[g])^2 = \left(1+\tfrac{1}{n-1}\right)^2 \tfrac{c}{n}(1-\epsilon_1) -\tfrac{c^2}{(n-1)^2}.\end{equation}
We now proceed to express \(\mathbb{E}[h^2]\) and \(\mathbb{E}[h^3]\) in terms of the coefficients \(b_{ij}\).
\begin{lemma}
\label{lemma:h2h3}
Let \((b_{ij})_{i,j=1}^n\) be a real matrix satisfying
\begin{equation}\label{eq:cbistochastic}\sum_{j=1}^{n}b_{ij} = 0\ \textrm{for all }i \in [n],\quad and \quad \sum_{i=1}^{n} b_{ij}=0\ \textrm{for all }j \in [n],\end{equation}
and let
\[h = \sum_{i,j}b_{ij}{\bf{1}}_{T_{ij}}.\]
Then
\[\mathbb{E}[h^2] = \tfrac{1}{n-1} \sum_{i,j} b_{ij}^2,\]
and
\begin{equation}
\label{eq:b3}
\mathbb{E}[h^3] = \tfrac{n}{(n-1)(n-2)} \sum_{i,j} b_{ij}^3.
\end{equation}
\end{lemma}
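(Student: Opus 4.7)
The plan is to expand $h^2$ and $h^3$ using $h=\sum_{i,j}b_{ij}\mathbf{1}_{T_{ij}}$, take expectations termwise, and reduce each resulting multi-sum to $\sum b_{ij}^2$ or $\sum b_{ij}^3$ by using the hypothesis that all row and column sums of $(b_{ij})$ vanish. The central combinatorial fact is that
\[
\mathbb{E}\bigl[\mathbf{1}_{T_{i_1 j_1}}\cdots\mathbf{1}_{T_{i_r j_r}}\bigr]
=\Pr_{\sigma\in S_n}[\sigma(i_s)=j_s\text{ for }1\le s\le r]
\]
equals $(n-m)!/n!$ when the $r$ constraints are consistent and reduce to $m$ distinct constraints, and is $0$ otherwise. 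In particular, $\mathbb{E}[\mathbf{1}_{T_{ij}}\mathbf{1}_{T_{kl}}]$ equals $1/n$ on the diagonal, $1/(n(n-1))$ when $i\neq k$ and $j\neq l$, and $0$ in the ``row-conflict'' and ``column-conflict'' cases. For $\mathbb{E}[h^2]$ the diagonal contributes $\tfrac{1}{n}\sum b_{ij}^2$; for the remaining sum $\sum_{i\neq k,\,j\neq l}b_{ij}b_{kl}$ one uses inclusion-exclusion together with (\ref{eq:cbistochastic}) to obtain $\sum_{k\neq i,\,l\neq j}b_{kl}=\sum_{k,l}b_{kl}-\sum_l b_{il}-\sum_k b_{kj}+b_{ij}=b_{ij}$, which collapses the double sum to $\sum b_{ij}^2$. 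Combining gives $\mathbb{E}[h^2]=\bigl(\tfrac{1}{n}+\tfrac{1}{n(n-1)}\bigr)\sum b_{ij}^2=\tfrac{1}{n-1}\sum b_{ij}^2$.

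\textbf{Third moment.} The cube expansion introduces ordered triples $\bigl((i_s,j_s)\bigr)_{s=1}^3$. I classify these by the equality partition of $\{1,2,3\}$ induced by the $i$-coordinates; for the expectation to be non-zero this must coincide with the partition induced by the $j$-coordinates, leaving three cases. Case (i), all three pairs equal, contributes $\tfrac{1}{n}\sum b_{ij}^3$. Case (ii), exactly two pairs coinciding, has three symmetric sub-cases, each contributing $\tfrac{1}{n(n-1)}\sum_{i\neq k,\,j\neq l}b_{ij}^2 b_{kl}$, which by the same zero-sum manipulation as above equals $\tfrac{1}{n(n-1)}\sum b_{ij}^3$. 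Case (iii), all three pairs distinct in both coordinates, contributes $\tfrac{1}{n(n-1)(n-2)}\cdot Y$, where $Y$ is the sum over such ordered triples. To evaluate $Y$, fix $(i_2,j_2)$ and $(i_3,j_3)$ with $i_2\neq i_3$ and $j_2\neq j_3$ and apply inclusion-exclusion to the inner sum over $(i_1,j_1)$; using (\ref{eq:cbistochastic}) three times one picks up only the ``corner'' correction $b_{i_2 j_2}+b_{i_2 j_3}+b_{i_3 j_2}+b_{i_3 j_3}$. Expanding the resulting four sub-sums and invoking the zero row/column sums once more, each of them simplifies to $\sum b_{ij}^3$, so $Y=4\sum b_{ij}^3$.

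\textbf{Combining and obstacle.} The announced coefficient then falls out of the identity
\[
\frac{1}{n}+\frac{3}{n(n-1)}+\frac{4}{n(n-1)(n-2)}
=\frac{(n-1)(n-2)+3(n-2)+4}{n(n-1)(n-2)}
=\frac{n^2}{n(n-1)(n-2)}
=\frac{n}{(n-1)(n-2)}.
\]
The only real obstacle is bookkeeping in case (iii): one must carefully verify that inconsistent equality patterns between the $i$- and $j$-coordinates contribute $0$, and apply inclusion-exclusion symmetrically in $i$ and $j$ when computing $Y$. The algebraic miracle that $(n-1)(n-2)+3(n-2)+4$ collapses to exactly $n^2$ serves as a reassuring sanity check that the case enumeration and the evaluation $Y=4\sum b_{ij}^3$ have been carried out correctly.
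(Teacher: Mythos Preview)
Your proof is correct and follows essentially the same approach as the paper's: expand $h^2$ and $h^3$, classify terms by the equality pattern among the index pairs (which must match in both coordinates for the expectation to be non-zero), and repeatedly exploit the zero row/column sums to collapse each surviving multi-sum to $\sum b_{ij}^2$ or $\sum b_{ij}^3$. The only cosmetic difference is that the paper writes out the full inclusion--exclusion expansion of $\sum_{i,k,p\ \text{distinct};\ j,l,q\ \text{distinct}} b_{ij}b_{kl}b_{pq}$ in one go, whereas you compute it iteratively by first summing over one index pair to pick up the ``corner'' term $b_{i_2 j_2}+b_{i_2 j_3}+b_{i_3 j_2}+b_{i_3 j_3}$ and then handling the four resulting sub-sums; both routes yield $Y=4\sum b_{ij}^3$.
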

\begin{proof}
We first consider \(\mathbb{E}[h^2]\). Squaring (\ref{eq:defh}), we obtain
\[h^2 = \sum_{i,j,k,l}b_{ij}b_{kl}{\bf{1}}_{T_{ij}}{\bf{1}}_{T_{kl}} = \sum_{i,j}b_{ij}^2 {\bf{1}}_{T_{ij}} +\sum_{\substack{i,j,k,l:\\ i \neq k,\ j \neq l}}b_{ij}b_{kl} {\bf{1}}_{T_{ij} \cap T_{kl}}.\]
Taking expectations, we obtain:
\begin{align}
\label{eq:h2}
\mathbb{E}[h^2] & = \tfrac{1}{n}\sum_{i,j} b_{ij}^2 + \tfrac{1}{n(n-1)} \sum_{\substack{i,j,k,l:\\ i \neq k,\ j \neq l}} b_{ij}b_{kl}\nonumber \\
&= \tfrac{1}{n}\sum_{i,j} b_{ij}^2 + \tfrac{1}{n(n-1)} \left(\sum_{i,j,k,l}b_{ij}b_{kl}-\sum_{i,j,l} b_{ij}b_{il} - \sum_{i,j,k}b_{ij}b_{kj}+\sum_{i,j}b_{ij}^2\right) \nonumber \\
&= \tfrac{1}{n-1} \sum_{i,j}b_{ij}^2,
\end{align}
where the last equality follows from (\ref{eq:cbistochastic}).

We now consider \(\mathbb{E}[h^3]\). Cubing (\ref{eq:defh}), we obtain
\begin{align*}h^3 &= \sum_{i,j,k,l,p,q}b_{ij}b_{kl}b_{pq}{\bf{1}}_{T_{ij}}{\bf{1}}_{T_{kl}}{\bf{1}}_{T_{pq}}\\
& = \sum_{i,j}b_{ij}^3 {\bf{1}}_{T_{ij}} +3\sum_{\substack{i,j,k,l:\\ i \neq k,\ j \neq l}}b_{ij}^2 b_{kl} {\bf{1}}_{T_{ij} \cap T_{kl}}\\
&+\sum_{\substack{i,k,p\ \textrm{distinct}, \\ j,l,q\ \textrm{distinct}}}b_{ij} b_{kl} b_{pq} {\bf{1}}_{T_{ij} \cap T_{kl} \cap T_{pq}}.\end{align*}
Taking the expectation of the above gives:
\begin{align}\label{eq:eh3}\mathbb{E}[g^3] &= \frac{1}{n}\sum_{i,j}b_{ij}^3 +\frac{3}{n(n-1)}\sum_{\substack{i,j,k,l:\\ i \neq k,\ j \neq l}}b_{ij}^2 b_{kl}\nonumber \\
&+\frac{1}{n(n-1)(n-2)}\sum_{\substack{i,k,p\ \textrm{distinct}, \\ j,l,q\ \textrm{distinct}}}b_{ij} b_{kl} b_{pq}.\end{align}
Observe that
\begin{align} \label{eq:sim1} \sum_{\substack{i,j,k,l:\\ i \neq k,\ j \neq l}}b_{ij}^2 b_{kl} & = \sum_{i,j,k,l} b_{ij}^2b_{kl} - \sum_{i,j,l} b_{ij}^2b_{il}-\sum_{i,j,k}b_{ij}^2b_{kj}+\sum_{i,j}b_{ij}^3 \nonumber \\
&=\sum_{i,j}b_{ij}^3,
\end{align}
using (\ref{eq:cbistochastic}).

Similarly, we have
\begin{align} \label{eq:sim2}\sum_{\substack{i,k,p\ \textrm{distinct}, \\ j,l,q\ \textrm{distinct}}}b_{ij} b_{kl} b_{pq}& = \sum_{\substack{i,k,p, \\ j,l,q}}b_{ij} b_{kl} b_{pq}(1-1_{i=k})(1-1_{k=p})(1-1_{p=i})(1-1_{j=l})(1-1_{l=q})(1-1_{q=j}) \nonumber \\
&= \sum_{\substack{i,k,p, \\ j,l,q}}b_{ij} b_{kl} b_{pq}-3\sum_{\substack{i,p\\ j,l,q}}b_{ij} b_{il} b_{pq}-3\sum_{\substack{i,k,p\\ j,q}}b_{ij} b_{kj} b_{pq} \nonumber \\
&+2\sum_{\substack{i\\ j,l,q}}b_{ij}b_{il}b_{iq}+2\sum_{\substack{i,k,p\\ j}}b_{ij}b_{kj}b_{pj}\nonumber \\
&+6\sum_{\substack{i,p\\ j,l}}b_{ij}b_{il}b_{pl}+3\sum_{\substack{i,k\\ j,l}}b_{ij}b_{kl}^2\nonumber \\
&-6\sum_{\substack{i\\ j,l}}b_{ij}b_{il}^2-6\sum_{\substack{i,p\\ j}}b_{ij}^2b_{pj}+4\sum_{i,j}b_{ij}^3\nonumber \\
&= 4\sum_{i,j}b_{ij}^3,\end{align}
again using (\ref{eq:cbistochastic}). Substituting (\ref{eq:sim1}) and (\ref{eq:sim2}) into (\ref{eq:eh3}) gives:
\begin{equation} \label{eq:h3expression} \mathbb{E}[h^3] = \frac{n}{(n-1)(n-2)} \sum_{i,j}b_{ij}^3,\end{equation}
completing the proof of Lemma \ref{lemma:h2h3}.
\end{proof}

Combining (\ref{eq:h2}) and (\ref{eq:h2bound}) yields
\begin{equation}\label{eq:sumsquares}\sum_{i,j} b_{ij}^2 = \left(1+\tfrac{1}{n-1}\right)(1-\epsilon_1)c -\tfrac{c^2}{n-1}.\end{equation}

We now require a lower bound on \(\mathbb{E}[h^3]\). In fact, it is more convenient to deal with the non-negative function \(g\); since \(g\) is an affine shift of \(h\), and \(\mathbb{E}[h]\) and \(\mathbb{E}[h^2]\) are both known, a lower bound on \(\mathbb{E}[g^3]\) will immediately yield a lower bound on \(\mathbb{E}[h^3]\).

\begin{lemma}
\label{lemma:eg3lowerbound}
Under the hypotheses of Theorem \ref{thm:main}, if \(g = (1+\tfrac{1}{n-1})f_1 + (1-\tfrac{1}{n-1})c,\) then
\[\mathbb{E}[g^3] \geq c^3 \frac{(n-2)^2(n+1)}{(n-1)^3} +3c^2 \frac{n(n-2)}{(n-1)^3}+c \frac{n^2}{(n-1)^3} - \tfrac{27}{4} (3c^2+2c) \epsilon_1^{1/2}/n.\]
\end{lemma}
\begin{proof}
Recall that 
\[\mathbb{E}[(f-f_1)^2] = \epsilon_1 c/n,\]
and
\[g = (1+\tfrac{1}{n-1})f_1 + (1-\tfrac{1}{n-1})c.\]
Let
\[F = (1+\tfrac{1}{n-1})f + (1-\tfrac{1}{n-1})c.\]
Observe that \(F\) takes just two values, \(L := (1-\tfrac{1}{n-1})c\) and \(H := (1-\tfrac{1}{n-1})c + 1+\tfrac{1}{n-1}\). Moreover, \(\mathbb{E}[F] = c\), and 
\[\mathbb{E}[(g-F)^2] = (\tfrac{n}{n-1})^2 \epsilon_1 c/n.\]
These conditions suffice to obtain a lower bound on \(\mathbb{E}[g^3]\). Indeed, we will now solve the following optimization problem.\\
\\
\textit{Problem} \(P\). Let $\theta \in (0,1)$ and let $H,L,\eta \in \mathbb{R}_{\geq 0}$ be such that $H > L$. Define a function $F\colon [0,1] \rightarrow \{H,L\}$ by
\[F(x) = \begin{cases} H & \textrm{if } 0 \leq x < \theta, \\ L & \textrm{if } \theta \leq x \leq 1. \end{cases}\]
Among all (measurable) functions $g\colon [0,1] \rightarrow \mathbb{R}_{\geq 0}$ such that $\mathbb{E}[g] = \mathbb{E}[F]$ and $\mathbb{E}[(g-F)^2] \leq \eta$, find the minimum value of $\mathbb{E}[g^3]$.\\

Observe that if \(g\colon [0,1] \to \mathbb{R}_{\geq 0}\) is feasible for \(P\), then the function
\[\tilde{g}(x) = \begin{cases} \frac{1}{\theta} \int_{0}^{\theta}g(x) \ud x & \textrm{if } 0 \leq x < \theta, \\ \frac{1}{1-\theta} \int_{\theta}^{1}g(x) \ud x & \textrm{if }\theta \leq x \leq 1, \end{cases}\]
obtained by averaging \(g\) first over \([0,\theta]\) and then over \([\theta,1]\), is also feasible. Indeed, we clearly have \(\mathbb{E}[\tilde{g}] = \mathbb{E}[g]\), and
\begin{align*}\mathbb{E}[(g-F)^2] & = \int_{0}^{\theta} (g(x)-H)^2 \ud x + \int_{\theta}^{1} (g(x)-L)^2 \ud x \\
& \geq \frac{1}{\theta}\left(\int_{0}^{\theta} (g(x) - H) \ud x \right)^2 + \frac{1}{1-\theta}\left(\int_{\theta}^{1} (g(x) - L) \ud x \right)^2\\
& = \theta \left(\frac{1}{\theta} \int_{0}^{\theta} g(x) \ud x - H \right)^2 + (1-\theta)\left(\frac{1}{1-\theta} \int_{\theta}^{1} g(x) \ud x - L \right)^2\\
& = \mathbb{E}[(\tilde{g}-F)^2],
\end{align*}
by the Cauchy-Schwarz inequality. Moreover, we have
\begin{align*}\mathbb{E}[g^3] & = \int_{0}^{\theta} g(x)^3 \ud x + \int_{\theta}^{1} g(x)^3 \ud x \\
& = \theta \cdot \frac{1}{\theta} \int_{0}^{\theta} g(x)^3 \ud x + (1-\theta) \cdot \frac{1}{1-\theta}\int_{\theta}^{1} g(x)^3 \ud x \\
& \geq \theta \left(\frac{1}{\theta}\int_{0}^{\theta} g(x) \ud x\right)^3 + (1-\theta)\left(\frac{1}{1-\theta}\int_{\theta}^{1} g(x) \ud x\right)^3\\
& = \mathbb{E}[\tilde{g}^3],
\end{align*}
by the convexity of \(y \mapsto y^3\). Hence, replacing \(g\) with \(\tilde{g}\) if necessary, we may assume that \(g\) is constant on \([0,\theta)\) and on \([\theta,1]\). In other words, we may assume that \(g\) has the following form:
\[g(x) = \begin{cases} r & \textrm{if }0 \leq x < \theta, \\ s & \textrm{if }\theta \leq x \leq 1. \end{cases}\]

Therefore, \(P\) is equivalent to the following problem:\\
\\
\textit{Problem} \(Q\):
\begin{align*} \textrm{Minimize}\quad & \theta r^3 + (1-\theta)s^3\\
\textrm{subject to}\quad & \theta r+(1-\theta)s = \theta H + (1-\theta)L,\\
& \theta (r-H)^2 + (1-\theta)(s-L)^2 \leq \eta,\\
& r,s \geq 0.\end{align*}

Or, writing \(r = H - (1-\theta)\delta\) (so that \(s = L+\theta \delta\)), we obtain the following reformulation:\\
\\
\textit{Problem} \(Q'\):
\begin{align*}\textrm{Minimize}\quad & \theta (H- (1-\theta)\delta)^3 + (1-\theta)(L+\theta \delta)^3\\
\textrm{subject to}\quad & \theta(1-\theta)\delta^2 \leq \eta,\\
& -L/\theta \leq \delta \leq H/(1-\theta).\end{align*}

When $\delta = H - L$, the function $g$ is constant. By the strict convexity of the function $y \mapsto y^3$ (on $\mathbb{R}_{\geq 0}$), the objective function is strictly decreasing on \([0,H-L]\) as a function of \(\delta\).\footnote{Alternatively, consider the derivative of the objective function, which is $3\theta(1-\theta)(\delta - (H-L)) (L + H - (1-2\theta)\delta)$.} Hence, provided \(\sqrt{\eta/(\theta(1-\theta))} \leq H-L\), the minimum is attained at \(\delta = \sqrt{\eta/(\theta(1-\theta))}\), at which point the value of the objective function is
\begin{align*}& \theta H^3 +(1-\theta)L^3  - 3(H^2-L^2) \sqrt{\theta(1-\theta)} \eta^{1/2} + 3((1-\theta) H +\theta L)\eta-\frac{1-2\theta}{\sqrt{\theta(1-\theta)}}\eta^{3/2}\\
&= \mathbb{E}[F^3] - 3(H^2-L^2) \sqrt{\theta(1-\theta)} \eta^{1/2} + 3((1-\theta) H + \theta L)\eta-\frac{1-2\theta}{\sqrt{\theta(1-\theta)}}\eta^{3/2},\end{align*}
using the fact that \(\theta H^3 +(1-\theta)L^3 = \mathbb{E}[F^3]\). Substituting in our values, namely \(\eta = (\tfrac{n}{n-1})^2 \epsilon_1 c/n\), \(H = (1-\tfrac{1}{n-1})c+(1+\tfrac{1}{n-1})\), \(L = (1-\tfrac{1}{n-1})c\), and \(\theta = c/n\), we see that provided \(\epsilon_1 \leq 1-c/n\) (which holds provided \(\epsilon_0 \leq 1/2\)), the optimum value of \(Q'\) is
\begin{align*} & \mathbb{E}[F^3] - 3(1+\tfrac{1}{n-1})\left((1+\tfrac{1}{n-1})^2 + 2c(1-\tfrac{1}{(n-1)^2})\right)(1-c/n)^{1/2} \epsilon_1^{1/2}c/n\\
& + 3\left( \left(1-\tfrac{1}{n-1}\right)c + (1-c/n)\left(1+\tfrac{1}{n-1}\right) \right) \tfrac{nc}{(n-1)^2} \epsilon_1 - \frac{1-2c/n}{\sqrt{1-c/n}}(1+\tfrac{1}{n-1})^{3} \epsilon_1^{3/2}c/n.\end{align*}
Hence, we have
\begin{align*}\mathbb{E}[g^3] & \geq \mathbb{E}[F^3] - 3(1+\tfrac{1}{n-1})\left((1+\tfrac{1}{n-1})^2 + 2c(1-\tfrac{1}{(n-1)^2})\right)(1-c/n)^{1/2} \epsilon_1^{1/2}c/n\\
& + 3\left( \left(1-\tfrac{1}{n-1}\right)c + (1-c/n)\left(1+\tfrac{1}{n-1}\right) \right) \tfrac{nc}{(n-1)^2} \epsilon_1 - \frac{1-2c/n}{\sqrt{1-c/n}}(1+\tfrac{1}{n-1})^{3} \epsilon_1^{3/2}c/n\\
& \geq \mathbb{E}[F^3] - 3(1+\tfrac{1}{n-1})^3 (2c^2+c)\epsilon_1^{1/2}/n-(1+\tfrac{1}{n-1})^3 \epsilon_1^{3/2}c/n\\
& \geq \mathbb{E}[F^3] - (1+\tfrac{1}{n-1})^3 (6c^2+4c)\epsilon_1^{1/2}/n\\
& \geq \mathbb{E}[F^3] - \tfrac{27}{4} (3c^2+2c) \epsilon_1^{1/2}/n,
\end{align*}
using the facts that \(\epsilon_1 \leq 1\), \(c/n \leq 1/2\) and \(n \geq 3\). Observe that
\[\mathbb{E}[F^3] = c^3 \frac{(n-2)^2(n+1)}{(n-1)^3} +3c^2 \frac{n(n-2)}{(n-1)^3}+c \frac{n^2}{(n-1)^3},\]
so we have
\[\mathbb{E}[g^3] \geq c^3 \frac{(n-2)^2(n+1)}{(n-1)^3} +3c^2 \frac{n(n-2)}{(n-1)^3}+c \frac{n^2}{(n-1)^3} - \tfrac{27}{4} (3c^2+2c) \epsilon_1^{1/2}/n,\]
as required.
\end{proof}

Since \(g = h+c\), we have
\[\mathbb{E}[g^3] = \mathbb{E}[(h+c)^3] = \mathbb{E}[h^3]+3c\mathbb{E}[h^2]+3c^2\mathbb{E}[h]+c^3.\]
Combining this with Lemma \ref{lemma:eg3lowerbound} yields:
\[\mathbb{E}[h^3] \geq c^3 \frac{(n-2)^2(n+1)}{(n-1)^3} +3c^2 \frac{n(n-2)}{(n-1)^3}+c \frac{n^2}{(n-1)^3} - \tfrac{27}{4} (3c^2+2c) \epsilon_1^{1/2}/n - 3c\mathbb{E}[h^2]-3c^2\mathbb{E}[h] - c^3.\]
Using \(\mathbb{E}[h]=0\) and (\ref{eq:h2bound}), we obtain:
\begin{align*}
\mathbb{E}[h^3] & \geq \frac{cn^2}{(n-1)^3}-\frac{3c^2 n}{(n-1)^3}+\frac{2c^3}{(n-1)^3}+\epsilon_1 \frac{3c^2 n}{(n-1)^2}-\tfrac{27}{4}\epsilon_1^{1/2}(3c^2+2c)/n.\\
& \geq \frac{cn^2}{(n-1)^3}-\frac{3c^2 n}{(n-1)^3}+\frac{2c^3}{(n-1)^3}-\tfrac{27}{4}\epsilon_1^{1/2}(3c^2+2c)/n.
\end{align*}
Combining this with (\ref{eq:b3}) yields:
\begin{align*}\sum_{i,j}b_{ij}^3 & \geq c\left(1-\tfrac{1}{(n-1)^2}\right) -c^2 \frac{3(n-2)}{(n-1)^2}+c^3 \frac{2(n-2)}{n(n-1)^2}-\epsilon_1^{1/2} (3c^2+2c)\frac{27(n-1)(n-2)}{4n^2}\\
& \geq c\left(1-\tfrac{1}{(n-1)^2}\right) -c^2 \frac{3(n-2)}{(n-1)^2}+c^3 \frac{2(n-2)}{n(n-1)^2}-\tfrac{27}{4}(3c^2+2c)\epsilon_1^{1/2}.
\end{align*}

To summarize, we now know that the \(b_{ij}\)'s satisfy:
\begin{equation}\label{eq:sumofsquares} \sum_{i,j} b_{ij}^2 = \left(1+\tfrac{1}{n-1}\right)(1-\epsilon_1)c -\tfrac{c^2}{n-1},\end{equation}
\begin{equation}\label{eq:sumofcubes} \sum_{i,j}b_{ij}^3 \geq c\left(1-\tfrac{1}{(n-1)^2}\right) -c^2 \frac{3(n-2)}{(n-1)^2}+c^3 \frac{2(n-2)}{n(n-1)^2}-\tfrac{27}{4}(3c^2+2c)\epsilon_1^{1/2}.\end{equation}
To illuminate the above, we now calculate the values of \(\sum_{i,j}b_{ij}^2\) and \(\sum_{i,j}b_{ij}^3\) when \(\mathcal{A}\) is a disjoint union of \(c\) 1-cosets of \(S_n\), for some \(c \in [n]\). Such a set \(\mathcal{A}\) must be of the form
\[T_{i_1 j} \cup T_{i_2 j} \cup \ldots \cup T_{i_c j}\]
for some \(j \in [n]\) and some distinct \(i_1,i_2,\ldots,i_c \in [n]\), or of the form
\[T_{ij_1}\cup T_{ij_2} \cup \ldots \cup T_{ij_c}\]
for some \(i \in [n]\) and some distinct \(i_1,i_2,\ldots,i_c \in [n]\). Clearly, all these families have the same \(\sum_{i,j}b_{ij}^2\) and the same \(\sum_{i,j} b_{ij}^3\); we may therefore assume that \(\mathcal{A} = T_{11} \cup T_{12} \cup \ldots \cup T_{1c}\). For this family, the matrix \((b_{ij})\) is as follows:
\[\left(\begin{array}{cccccc} 1-\tfrac{c}{n} & \ldots & 1-\tfrac{c}{n} & -\tfrac{c}{n} & \ldots & -\tfrac{c}{n}\\
-\tfrac{n-c}{n(n-1)} & \ldots & -\tfrac{n-c}{n(n-1)} & \tfrac{c}{n(n-1)} & \ldots & \tfrac{c}{n(n-1)}\\
\vdots && \vdots & \vdots && \vdots \\
-\tfrac{n-c}{n(n-1)} & \ldots & -\tfrac{n-c}{n(n-1)} & \tfrac{c}{n(n-1)} & \ldots & \tfrac{c}{n(n-1)}\\ \end{array} \right);\]
we have
\begin{align*} \sum_{i,j} b_{ij}^2 &= c\left(1-\tfrac{c}{n}\right)^2+c(n-1)(\tfrac{n-c}{n(n-1)})^2 + (n-c)(n-1)(\tfrac{c}{n(n-1)})^2\\
& = \frac{c(n-c)}{n-1}\\
& = c\left(1+\tfrac{1}{n-1}\right) - \tfrac{c^2}{n-1}\\
& := F(n,c),\end{align*}
and
\begin{align*}\sum_{i,j} b_{ij}^3 & = c\left(1-\tfrac{c}{n}\right)^3 - (n-c)\left(\tfrac{c}{n}\right)^3 - c(n-1)(\tfrac{c-1}{n(n-1)})^3 + (n-c)(n-1)(\tfrac{c}{n(n-1)})^3\\
& = c\left(1-\tfrac{1}{(n-1)^2}\right) -c^2 \frac{3(n-2)}{(n-1)^2}+c^3 \frac{2(n-2)}{n(n-1)^2} \\
&:= G(n,c).\end{align*}
Hence,  if \(c\) is a fixed integer, and \(\mathcal{A}\) is a family of size \(c(n-1)!\) whose characteristic function has Fourier transform which is highly concentrated on the first two levels, then (\ref{eq:sumofsquares}) says that \(\sum_{i,j}b_{ij}^2\) is close to \(F(n,c)\), the value it takes when \(\mathcal{A}\) is a disjoint union of \(c\) 1-cosets of \(S_n\). Similarly, (\ref{eq:sumofcubes}) says that \(\sum_{i,j}b_{ij}^3\) is not too far below \(G(n,c)\), the value it takes when \(\mathcal{A}\) is a disjoint union of \(c\) 1-cosets of \(S_n\). Formally, we have
\begin{equation}\label{eq:sumofsquaresF} \sum_{i,j} b_{ij}^2 = F(n,c)- c(1+\tfrac{1}{n-1})\epsilon_1,\end{equation}
and
\begin{equation}\label{eq:sumofcubesG} \sum_{i,j} b_{ij}^3 \geq G(n,c)-\tfrac{27}{4} (3c^2+2c) \epsilon_1^{1/2}.\end{equation}
These two facts will suffice to show that \(\mathcal{A}\) is close to being a union of 1-cosets of \(S_n\).

We now observe that \(c\) cannot be too small.

\begin{claim}
\label{claim:c-large}
$$c \geq 1-O(1/n)-\tfrac{135}{2} \epsilon_0^{1/2}.$$
\end{claim}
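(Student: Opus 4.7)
The key observation is that $a_{ij} = |\mathcal{A}\cap T_{ij}|/(n-1)! \leq |\mathcal{A}|/(n-1)! = c$ for every $i,j$, so $b_{ij} = a_{ij} - c/n \leq c(n-1)/n$. This yields the pointwise inequality
\[
b_{ij}^3 \;\leq\; \tfrac{c(n-1)}{n}\, b_{ij}^2 \qquad \text{for every $i,j$,}
\]
which holds trivially when $b_{ij}\leq 0$ (the right-hand side being non-negative) and follows from multiplying $b_{ij}^2 \geq 0$ by $b_{ij} \leq c(n-1)/n$ when $b_{ij}>0$. Summing over all $i,j$ and then combining with (\ref{eq:sumofsquaresF}) and (\ref{eq:sumofcubesG}) gives
\[
G(n,c) - \tfrac{27}{4}(3c^2+2c)\,\epsilon_1^{1/2} \;\leq\; \sum_{i,j} b_{ij}^3 \;\leq\; \tfrac{c(n-1)}{n}\sum_{i,j} b_{ij}^2 \;\leq\; \tfrac{c(n-1)}{n}\,F(n,c).
\]

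The next step --- the only nontrivial computational piece --- is to simplify the resulting discrepancy:
\[
G(n,c) - \tfrac{c(n-1)}{n}\,F(n,c) \;=\; \frac{c(n-c)\bigl[n^2(1-c) - 2n + 3c\bigr]}{n(n-1)^2}.
\]
The slickest route is to first notice the factorisation $G(n,c) = (n-2)c(n-c)(n-2c)/(n(n-1)^2)$ (verified by direct expansion against the formula for $G(n,c)$ obtained above) and then combine with $F(n,c) = c(n-c)/(n-1)$; a brief calculation then yields the displayed identity. I expect this algebraic identity to be the main obstacle of the proof; everything else is careful bookkeeping.

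To extract the lower bound on $c$, I assume $0 < c \leq 1$, since for $c \geq 1$ the claimed bound is trivial. I split on the size of $1-c$. If $1-c < 4/n$ then already $c > 1 - 4/n$, which is absorbed into the $O(1/n)$ term. Otherwise $n(1-c) \geq 4$, so $n^2(1-c) - 2n + 3c \geq n^2(1-c)/2$, and since $c \leq 1$ also $n-c \geq n-1$; together these bounds show that the left-hand side of the displayed identity is at least $cn(1-c)/(2(n-1))$. Using $3c^2 + 2c \leq 5c$ for $c \leq 1$, the right-hand side of the inequality derived in the first paragraph is at most $(135/4)\, c\, \epsilon_0^{1/2}$. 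Cancelling the positive factor $c$ yields $n(1-c)/(2(n-1)) \leq (135/4)\epsilon_0^{1/2}$, and hence $1 - c \leq (135/2)\epsilon_0^{1/2}$. Combining the two cases gives $c \geq 1 - O(1/n) - \tfrac{135}{2}\epsilon_0^{1/2}$, as required.
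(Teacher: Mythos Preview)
Your argument is correct, and it takes a genuinely different route from the paper's.

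The paper assumes $c\le 1$ and bounds the cubic sum by the $\ell_p$-norm inequality
\[
\sum_{i,j} b_{ij}^3 \;\le\; \Bigl(\sum_{i,j} b_{ij}^2\Bigr)^{3/2} \;\le\; \bigl(c+O(c/n)\bigr)^{3/2} \;=\; c^{3/2}\bigl(1+O(1/n)\bigr),
\]
and then compares with the cubic lower bound from (\ref{eq:sumofcubesG}), obtaining $c^{3/2}(1+O(1/n)) \ge c\bigl(1-O(1/n)-\tfrac{135}{4}\epsilon_0^{1/2}\bigr)$; dividing by $c$ and squaring gives the claim directly. Your approach instead exploits the pointwise bound $b_{ij}\le c(n-1)/n$ (coming from $a_{ij}\le c$) to get $\sum b_{ij}^3 \le \tfrac{c(n-1)}{n}\sum b_{ij}^2$, and then feeds in the \emph{exact} expressions $F(n,c)$ and $G(n,c)$, relying on the neat factorisations $G(n,c)=\tfrac{(n-2)c(n-c)(n-2c)}{n(n-1)^2}$ and $F(n,c)=\tfrac{c(n-c)}{n-1}$ to produce the identity
\[
G(n,c)-\tfrac{c(n-1)}{n}F(n,c)=\frac{c(n-c)\bigl[n^2(1-c)-2n+3c\bigr]}{n(n-1)^2},
\]
from which the bound on $1-c$ follows after the case split. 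The paper's route is shorter and avoids the algebraic identity entirely; your route is more explicit and in fact extracts slightly sharper intermediate information (the factor $n^2(1-c)$ rather than the cruder $c^{1/2}$), at the cost of the algebraic verification. Both are perfectly valid and arrive at the same constant $\tfrac{135}{2}$.
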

\begin{proof}[Proof of claim:]
Suppose that \(c \leq 1\). By the convexity of \(y \mapsto y^{3/2}\), we have
\begin{align*}
\sum_{i,j}b_{ij}^3 & \leq \left(\sum_{i,j}b_{ij}^2\right)^{3/2}\\
& \leq (c+2c/n)^{3/2}\\
& \leq c^{3/2}(1+2/n)^{3/2}\\
& \leq c^{3/2}(1+O(1/n)).
\end{align*}
Combining this with (\ref{eq:sumofcubes}) yields:
\begin{align*}c^{3/2}(1+O(1/n)) & \geq c-O(c/n^2) - O(c^2/n) - \tfrac{27}{4} (3c^2+2c) \epsilon_1^{1/2}\\
& \geq c(1-O(1/n) - \tfrac{135}{4}\epsilon_1^{1/2})\\
& \geq c(1-O(1/n)-\tfrac{135}{4} \epsilon_0^{1/2}).
\end{align*}
Rearranging, we obtain:
\[c^{1/2} \geq 1-O(1/n) - \tfrac{135}{4} \epsilon_0^{1/2}.\]
Squaring yields
\[c \geq 1-O(1/n) - \tfrac{135}{2} \epsilon_0^{1/2},\]
as required.
\end{proof}

Provided \(n_0\) is sufficiently large, and \(\epsilon_0\) is sufficiently small, Claim \ref{claim:c-large} implies that \(c \geq 1/2\), so \(c \leq 2c^2\). Hence, (\ref{eq:sumofsquares}) and (\ref{eq:sumofcubes}) imply the following.
\begin{equation}\label{eq:sumb2simpler}\sum_{i,j}b_{ij}^2 \leq  c+2c/n = c+O(c/n),\end{equation}
\begin{equation}\label{eq:sumb3simpler}\sum_{i,j}b_{ij}^3 \geq c - O(c^2)\epsilon_1^{1/2} -O(c^2/n).\end{equation}

Let \(x_1,\ldots,x_N\) denote the entries \((b_{ij})_{i,j \in [n]}\) in non-increasing order. We have
\begin{equation}\label{eq:sumx2simpler}\sum_{k=1}^{N}x_k^2 \leq c+O(c/n),\end{equation}
and
\begin{equation}\label{eq:sumx3simpler}\sum_{k=1}^{N}x_k^3 \geq c - O(c^2)\epsilon_1^{1/2} -O(c^2/n).\end{equation}

Subtracting \eqref{eq:sumx3simpler} from \eqref{eq:sumx2simpler} yields:
\begin{equation} \label{eq:skewness}
\sum_{k=1}^N x_k^2 (1 - x_k) \leq O(c^2) \epsilon_1^{1/2} + O(c^2/n).
\end{equation}
Let $m$ be the largest index \(k\) such that $x_k \geq 1/2$ (recall that the $x_k$ are arranged in non-increasing order). Then
\[ \sum_{k=m+1}^N x_k^2 \leq 2 \sum_{k=m+1}^N x_k^2 (1-x_k) \leq O(c^2) \epsilon_1^{1/2} + O(c^2/n).\]
Therefore
\begin{equation} \label{eq:bigterms-lb}
m \geq \sum_{k=1}^{m} x_k^2 \geq c - O(c^2) \epsilon_1^{1/2} - O(c^2/n).
\end{equation}
On the other hand, we have
\[ \sum_{k=1}^m (1 - x_k) \leq 4 \sum_{k=1}^m x_k^2 (1 - x_k) \leq O(c^2) \epsilon_1^{1/2} + O(c^2/n).\]
Rearranging,
\begin{equation} \label{eq:bigterms}
\sum_{k=1}^m x_k \geq m - O(c^2) \epsilon_1^{1/2} - O(c^2/n).
\end{equation}
Since $2x_k - 1 \leq x_k^2$, we have
\begin{equation} \label{eq:bigterms-ub}
c + O(c/n) \geq \sum_{k=1}^m x_k^2 \geq 2\sum_{k=1}^m x_k - m \geq m - O(c^2) \epsilon_1^{1/2} - O(c^2/n).
\end{equation}
Combining (\ref{eq:bigterms-lb}) and (\ref{eq:bigterms-ub}) yields:
\begin{equation} \label{eq:bigterms-int}
|c - m| \leq O(c^2) \epsilon_1^{1/2} + O(c^2/n).
\end{equation}
Our aim is now to replace $m$ by an integer $m'$ which satisfies the analogues of \eqref{eq:bigterms} and \eqref{eq:bigterms-int}, and which in addition has $|c-m'| < 1$. If $m \geq c$, then let $m' = \lceil c \rceil$. Certainly, the analogue of~\eqref{eq:bigterms-int} is satisfied, and we have
 \[\sum_{k=1}^{m'} x_k \geq \frac{m'}{m} \left(m - O(c^2) \epsilon_1^{1/2} - O(c^2/n)\right) \geq m' - O(c^2) \epsilon_1^{1/2} - O(c^2/n). 
\]
If $m < c$, then let $m' = \lfloor c \rfloor$. Again, the analogue of~\eqref{eq:bigterms-int} is satisfied, and using $x_k \geq -c/n$, we have
\[
 \sum_{k=1}^{m'} x_k \geq \sum_{k=1}^{m} x_k - c^2/n \geq m - O(c^2) \epsilon_1^{1/2} - O(c^2/n) \geq m' - O(c^2) \epsilon_1^{1/2} - O(c^2/n).
\]
Summarising, we have
\begin{gather}
 |c-m'| < \min(1, O(c^2) \epsilon_1^{1/2} + O(c^2/n)), \label{eq:bigterms-int-m} \\
 \sum_{k=1}^{m'} x_k \geq m' - O(c^2) \epsilon_1^{1/2} - O(c^2/n). \label{eq:bigterms-m}
\end{gather}
Now observe that (\ref{eq:bigterms-int-m}) and (\ref{eq:bigterms-m}) hold with $m'$ replaced by $\round c$. Indeed, we have $|c-\round c| \leq |c-m'|$ always. Moreover, if $m' \neq \round c $, then (since $|m'-c|<1$) we have 
$$1=|m'-\round c | \leq 2|m'-c| \leq O(c^2) \epsilon_1^{1/2} + O(c^2/n).$$
It follows that
$$\sum_{k=1}^{\round c} x_k \geq \round c  -1- c/n-O(c^2)\epsilon_1^{1/2} - O(c^2/n) \geq \round c -O(c^2)\epsilon_1^{1/2} - O(c^2/n),$$
since $-c/n \leq x_k \leq 1$ for all $k$. We may therefore redefine $m'=\round c $.

Let \(b_{i_lj_1},b_{i_2j_2},\ldots,b_{i_{m'} j_{m'}}\) be the entries of the matrix \(B\) corresponding to \(x_1,\ldots,x_{m'}\), and let 
\[\mathcal{C} = \bigcup_{l=1}^{m'} T_{i_l j_l}\]
denote the corresponding union of \(m'\) 1-cosets of \(S_n\). We have
\begin{align*}
\sum_{l=1}^{m'} |\mathcal{A} \cap T_{i_l j_l}| & \geq (n-1)!\sum_{l=1}^{m'} b_{i_l j_l} \\
& = (n-1)!\sum_{l=1}^{m'} x_l\\
& \geq (m'-O(c^2) \epsilon_1^{1/2} - O(c^2/n))(n-1)! \\ &\geq (c-O(c^2) \epsilon_1^{1/2} - O(c^2/n))(n-1)!.
\end{align*}
Since $|T_{i_l j_l} \cap T_{i_k j_k}| \leq (n-2)!$ whenever $k \neq l$, we have
\[
|\mathcal{A} \cap \mathcal{C}| \geq \sum_{l=1}^{m'} |\mathcal{A} \cap T_{i_l j_l}| - \binom{m'}{2} (n-2)!
\geq (c-O(c^2) \epsilon_1^{1/2} - O(c^2/n))(n-1)!,\]
i.e. \(\mathcal{A}\) contains almost all of \(\mathcal{C}\). Since \(|\mathcal{A}| = c(n-1)!\), we must have
\[|\mathcal{A} \triangle \mathcal{C}| = |\mathcal{A}| + |\mathcal{C}| - 2|\mathcal{A} \cap \mathcal{C}| \leq (O(c^2) \epsilon_1^{1/2} + O(c^2/n))(n-1)!.\]
Let \(\tilde{f} = 1_{\mathcal{C}}\) denote the characteristic function of \(\mathcal{C}\); then we have
\[\mathbb{E}[(f-\tilde{f})^2] = |\mathcal{A} \triangle \mathcal{C}|/n! \leq (O(c^2)\epsilon_1^{1/2}+O(c^2/n))/n.\]
This completes the proof of Theorem \ref{thm:main}.
\end{proof}

\section{Applications}
We now give two applications of Theorem \ref{thm:main}. The first application is a short proof of a conjecture of Cameron and Ku (Conjecture \ref{conj:weakcameronku}) on the structure of large intersecting families of permutations in $S_n$. The second application (Theorem \ref{thm:roughstability}) describes the structure of families of permutations which have small edge-boundary in the transposition graph. Both of these applications involve {\em normal Cayley graphs} on \(S_n\), so we will first give some background on normal Cayley graphs on finite groups.
\begin{definition}
Let \(G\) be a finite group, and let \(S \subset G \setminus \{\textrm{Id}\}\) be {\em symmetric} (meaning that \(S^{-1}=S\)). The {\em Cayley graph on \(G\) with generating set \(S\)} is the undirected graph with vertex-set \(G\), where we join \(g\) to \(gs\) for every \(g \in G\) and \(s \in S\); we denote it by \(\Cay(G,S)\). Formally,
\[V(\Cay(G,S)) = G,\quad E(\Cay(G,S)) = \{\{g,gs\}:\ g \in G,\ s \in S\}.\]
Note that the Cayley graph \(\Cay(G,S)\) is \(|S|\)-regular. If the generating set \(S\) is conjugation-invariant, i.e. is a union of conjugacy classes of \(G\), the Cayley graph \(\Cay(G,S)\) is said to be a {\em normal} Cayley graph.
\end{definition} 
The connection between normal Cayley graphs and the Fourier transform arises from the following fundamental theorem, which states that for any normal Cayley graph, the eigenspaces of its adjacency matrix are in 1-1 correspondence with the isomorphism classes of irreducible representations of the group.

\begin{theorem}[Frobenius / Schur / Diaconis--Shahshahani]
\label{thm:normalcayley}
Let \(G\) be a finite group, let \(S \subset G\) be an inverse-closed, conjugation-invariant subset of \(G\), let \(\Gamma\) be the Cayley graph on \(G\) with generating set \(S\), and let \(A\) be the adjacency matrix of \(\Gamma\). Let \(\{\rho_{1},\ldots,\rho_{k}\}\) be a complete set of non-isomorphic irreducible representations of \(G\) --- i.e., containing one representative from each isomorphism class of irreducible representations of \(G\). Let \(U_{\rho_i}\) denote the subspace of \(\mathbb{C}[G]\) consisting of functions whose Fourier transform is supported on \([\rho_i]\). Then we have
\[\mathbb{C}[G] = \bigoplus_{i=1}^{k}U_{\rho_i},\]
and each \(U_{\rho_i}\) is an eigenspace of \(A\) with dimension \(\dim(\rho_{i})^{2}\) and eigenvalue
\begin{equation}\label{eq:normalcayley}\lambda_{i} = \frac{1}{\dim(\rho_{i})}\sum_{g \in S} \chi_{i}(g),\end{equation}
where \(\chi_{i}(\sigma) = \textrm{Trace}(\rho_{i}(\sigma))\) denotes the character of the irreducible representation \(\rho_{i}\). 
\end{theorem}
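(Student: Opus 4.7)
The plan is to recognise the adjacency matrix $A$ as the operator of right multiplication by the element $z := \sum_{s \in S} s$ inside the group algebra $\mathbb{C}[G]$, and then read off its spectral data from the Peter--Weyl / Wedderburn decomposition of $\mathbb{C}[G]$ together with Schur's lemma. Identifying a function $f : G \to \mathbb{C}$ with the formal sum $\sum_g f(g)\,g \in \mathbb{C}[G]$, the defining relation $(Af)(g) = \sum_{s \in S} f(gs)$ becomes precisely $Af = f\cdot z$.

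Next I would invoke Peter--Weyl in its bimodule form, $\mathbb{C}[G] \cong \bigoplus_i \mathrm{End}(V_{\rho_i})$ as a $(G\times G)$-module under left and right multiplication, with the $\rho_i$-summand of dimension $\dim(\rho_i)^2$. Fourier inversion identifies this summand with the subspace $U_{\rho_i}$ of functions whose Fourier transform is supported on $[\rho_i]$, which simultaneously establishes the stated orthogonal decomposition $\mathbb{C}[G] = \bigoplus_i U_{\rho_i}$ and the claim $\dim U_{\rho_i} = \dim(\rho_i)^2$.

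Since $S$ is conjugation-invariant, $z$ lies in the centre of $\mathbb{C}[G]$, so right multiplication by $z$ commutes with both left and right multiplication by group elements. It therefore preserves each $U_{\rho_i}$ and, being $(G\times G)$-equivariant on the simple $(G\times G)$-bimodule $\mathrm{End}(V_{\rho_i})$, must act as a scalar $\lambda_i$ by Schur's lemma. Hence $U_{\rho_i}$ is a single eigenspace of $A$, as claimed.

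To pin down the scalar, apply Schur once more to $\rho_i(z) = \sum_{s \in S} \rho_i(s)$ acting on $V_{\rho_i}$: it commutes with all of $\rho_i(G)$, so it equals $\lambda_i\cdot \mathrm{Id}_{V_{\rho_i}}$, and taking traces gives $\lambda_i \dim(\rho_i) = \sum_{s \in S} \chi_i(s)$, which is \eqref{eq:normalcayley}. The only genuinely delicate point is the bookkeeping that matches the Fourier-analytic $U_{\rho_i}$ with the representation-theoretic isotypic component $\mathrm{End}(V_{\rho_i})$, and tracking the left/right convolution conventions so that a \emph{conjugation-invariant} (rather than merely symmetric) $S$ is what Schur's lemma actually needs; beyond this, everything reduces cleanly to centrality of $z$ plus two applications of Schur.
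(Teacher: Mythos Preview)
Your argument is correct and is essentially the standard proof of this classical result: identify $A$ with right multiplication by the central element $z=\sum_{s\in S}s$, decompose $\mathbb{C}[G]$ via Wedderburn/Peter--Weyl into the isotypic blocks $\mathrm{End}(V_{\rho_i})\cong U_{\rho_i}$, and apply Schur's lemma twice to see that $z$ acts as the scalar $\lambda_i=\tfrac{1}{\dim\rho_i}\sum_{s\in S}\chi_i(s)$ on each block. The bookkeeping you flag (matching $U_{\rho_i}$ with the isotypic component, and the convolution conventions) is routine.

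Note, however, that the paper does not give its own proof of this theorem: it is stated as background (attributed to Frobenius, Schur, and Diaconis--Shahshahani) and used as a black box. So there is nothing to compare against; you have simply supplied the standard proof that the paper omits.
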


\subsection{Large intersecting families in \(S_n\)}
In this section, we will apply Theorem \ref{thm:main} to give a new proof of a conjecture of Cameron and Ku on the structure of large intersecting families in \(S_n\).

The following definition was introduced by Deza and Frankl \cite{dezafrankl}.
\begin{definition}
We say that a family \(\mathcal{A} \subset S_n\) is {\em intersecting} if any two permutations in \(\mathcal{A}\) agree on some point --- i.e., for any \(\sigma,\pi \in \mathcal{A}\), there exists \(i \in [n]\) such that \(\sigma(i)=\pi(i)\).
\end{definition}

Deza and Frankl \cite{dezafrankl} proved the following analogue of the Erd\H{o}s--Ko--Rado theorem \cite{tekr} for permutations.

\begin{theorem}[Deza--Frankl]
\label{thm:dezafrankl}
If \(\mathcal{A} \subset S_{n}\) is intersecting, then \(|\mathcal{A}| \leq (n-1)!\).
\end{theorem}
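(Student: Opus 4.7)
My plan is to realize intersecting families as independent sets in a natural normal Cayley graph on $S_n$ and then apply the Hoffman ratio bound, using Theorem \ref{thm:normalcayley} to identify the relevant eigenvalue. Let $\mathcal{D}_n \subset S_n$ denote the set of derangements, and let $\Gamma = \Cay(S_n, \mathcal{D}_n)$. Since $\mathcal{D}_n$ is inverse-closed and a union of conjugacy classes, $\Gamma$ is a normal Cayley graph; moreover, $\sigma$ and $\pi$ are adjacent in $\Gamma$ iff $\sigma^{-1}\pi \in \mathcal{D}_n$, which is exactly the condition that $\sigma(i) \neq \pi(i)$ for every $i \in [n]$. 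Hence intersecting families in $S_n$ are precisely the independent sets of $\Gamma$, and the theorem becomes the claim $\alpha(\Gamma) \leq (n-1)!$.

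I would then invoke the Hoffman ratio bound: for a $d$-regular graph on $N$ vertices with smallest adjacency eigenvalue $\lambda_{\min} < 0$, $\alpha \leq \tfrac{-\lambda_{\min} N}{d - \lambda_{\min}}$. With $N = n!$ and $d = |\mathcal{D}_n|$, this bound yields exactly $(n-1)!$ when $\lambda_{\min} = -d/(n-1)$. The eigenvalue associated with $[n-1,1]$ is computed via Theorem \ref{thm:normalcayley}: since the permutation representation decomposes as $[n] \oplus [n-1,1]$, one has $\chi_{(n-1,1)}(\sigma) = \operatorname{fix}(\sigma) - 1$, so every derangement contributes $-1$, giving $\lambda_{(n-1,1)} = \tfrac{1}{n-1}\sum_{\sigma \in \mathcal{D}_n}(-1) = -d/(n-1)$. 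Substituting into the Hoffman bound yields $\alpha(\Gamma) \leq n!/n = (n-1)!$.

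The main obstacle is verifying that $-d/(n-1)$ is genuinely the \emph{smallest} eigenvalue of $\Gamma$, since a strictly more negative eigenvalue coming from another irreducible representation would weaken the Hoffman bound beyond $(n-1)!$. To handle this, I would estimate $\sum_{\sigma \in \mathcal{D}_n}\chi_\alpha(\sigma)$ for each partition $\alpha \notin \{(n),(n-1,1)\}$, writing $\mathbf{1}_{\mathcal{D}_n}$ by inclusion--exclusion on fixed-point sets and controlling the resulting character sums over Young subgroups via the Murnaghan--Nakayama rule. As a cleaner alternative that sidesteps this spectral subtlety entirely, I would in practice write out the classical Deza--Frankl cyclic-coset argument: fix an $n$-cycle $c$ and partition $S_n$ into the $(n-1)!$ left cosets of $\langle c \rangle$; within each coset, any two distinct elements differ by a non-identity power of $c$, which is a derangement of $[n]$, so at most one element per coset can belong to an intersecting family. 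This immediately yields $|\mathcal{A}| \leq (n-1)!$ with no representation theory at all.
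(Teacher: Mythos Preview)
Your ``cleaner alternative'' is exactly the paper's proof: partition $S_n$ into the $(n-1)!$ left cosets of the cyclic group $\langle c\rangle$ generated by an $n$-cycle, observe that any two distinct permutations in the same coset disagree everywhere, and conclude that an intersecting family meets each coset in at most one element.

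Your primary route via the Hoffman bound on the derangement graph is also discussed in the paper, but only \emph{after} this theorem, and explicitly flagged there as a ``much longer'' alternative. The gap you identify is real: showing that $-d_n/(n-1)$ is the minimum eigenvalue of $\Gamma$ is a nontrivial theorem of Renteln (Theorem~\ref{thm:renteln} in the paper), and your sketch of how to verify it (inclusion--exclusion on fixed-point sets plus Murnaghan--Nakayama over all partitions) would amount to reproving that result. So the spectral approach is correct in principle but imports heavy machinery for a statement whose direct combinatorial proof is three lines. Since you already wrote out the coset argument, you should lead with it and drop the Hoffman discussion entirely for this particular theorem.
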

\begin{proof} We reproduce the original proof of Deza and Frankl. Let \(\rho \in S_n\) be an \(n\)-cycle, and let \(H\) be the cyclic group of order \(n\) generated by \(\rho\). For any left coset \(\sigma H\) of \(H\), any two distinct permutations in \(\sigma H\) disagree at every point, and therefore \(\sigma H\) contains at most one member of \(\mathcal{A}\). Since the left cosets of \(H\) partition \(S_{n}\), it follows that \(|\mathcal{A}| \leq (n-1)!\).
\end{proof}
Note that equality holds in Theorem \ref{thm:dezafrankl} if $\mathcal{A}$ is a 1-coset. Deza and Frankl conjectured that equality holds only for the 1-cosets. This turned out to be much harder to prove than is usual with equality statements for Erd\H{o}s--Ko--Rado type theorems; it was eventually proved by Cameron and Ku \cite{cameron}.

\begin{theorem}[Cameron--Ku]
\label{thm:cameronku}
If \(\mathcal{A} \subset S_n\) is intersecting with \(|\mathcal{A}|=(n-1)!\), then \(\mathcal{A}\) is a 1-coset.
\end{theorem}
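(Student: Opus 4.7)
The plan is to use the Hoffman (ratio) bound on the derangement graph to force the characteristic function of $\mathcal{A}$ to lie in $U_1$, and then invoke Theorem \ref{thm:characterization} to conclude. Let $D_n \subset S_n$ denote the set of derangements, and let $\Gamma = \Cay(S_n, D_n)$. Since $D_n$ is inverse-closed and conjugation-invariant, $\Gamma$ is a normal Cayley graph. The intersecting property of $\mathcal{A}$ is equivalent to saying that $\mathcal{A}$ is an independent set in $\Gamma$, so the problem reduces to showing that every independent set in $\Gamma$ of size $(n-1)!$ is a 1-coset of a point-stabiliser.

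By Theorem \ref{thm:normalcayley}, the eigenspaces of the adjacency matrix of $\Gamma$ are exactly the subspaces $U_\alpha$ indexed by partitions $\alpha \vdash n$, with corresponding eigenvalues $\lambda_\alpha = \tfrac{1}{\chi_\alpha(\mathrm{Id})}\sum_{g \in D_n}\chi_\alpha(g)$. Two values are immediate: $\lambda_{(n)} = |D_n|$ (the degree of regularity), and $\lambda_{(n-1,1)} = -|D_n|/(n-1)$, since the character of the standard representation $[n-1,1]$ sends $g$ to $\mathrm{fix}(g) - 1$, which equals $-1$ at every derangement. Granting for now that $-|D_n|/(n-1)$ is the minimum eigenvalue of $\Gamma$, and that this minimum is attained \emph{only} on $U_{(n-1,1)}$, the Hoffman bound yields
\[
  |\mathcal{A}| \leq \frac{-\lambda_{\min}}{|D_n| - \lambda_{\min}} \, n! \; = \; \frac{|D_n|/(n-1)}{|D_n|\cdot n/(n-1)} \, n! \; = \; (n-1)!,
\]
reproving Theorem \ref{thm:dezafrankl}. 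Moreover, equality in the Hoffman bound forces $f - \mathbb{E}[f]$ to lie in the minimum eigenspace, which by assumption is exactly $U_{(n-1,1)}$; since $\mathbb{E}[f] \in U_{(n)}$, this gives $f = \mathbf{1}_\mathcal{A} \in U_{(n)} \oplus U_{(n-1,1)} = U_1$.

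Now Theorem \ref{thm:characterization} applies, so $\mathcal{A}$ is a disjoint union of 1-cosets of $S_n$; since each 1-coset has cardinality $(n-1)!$ and $|\mathcal{A}| = (n-1)!$, the union must consist of a single 1-coset, as required.

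The main obstacle in this plan is the representation-theoretic input: one must show that for every partition $\alpha$ of $n$ with $\alpha \neq (n),(n-1,1)$ the eigenvalue $\lambda_\alpha$ satisfies $\lambda_\alpha > -|D_n|/(n-1)$. Equivalently, one must produce a sufficiently sharp character bound $\bigl|\sum_{g \in D_n}\chi_\alpha(g)\bigr| < \chi_\alpha(\mathrm{Id})\cdot |D_n|/(n-1)$ for all such $\alpha$. This estimate is non-trivial; it was first established by Renteln via generating-function methods and subsequently reproved by several authors using the Murnaghan--Nakayama rule together with careful inductive bookkeeping on partitions. Once this input is in place, the rest of the argument is the short Hoffman-bound computation above combined with the clean application of Theorem \ref{thm:characterization}.
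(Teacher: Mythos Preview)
Your proposal is correct and follows essentially the same route as the paper: Hoffman's bound on the derangement graph, the identification of the minimum eigenvalue $-d_n/(n-1)$ with eigenspace exactly $U_{(n-1,1)}$ (via Renteln's theorem and the refinement in \cite{cameronkuconj}), the equality case forcing $\mathbf{1}_{\mathcal{A}}\in U_1$, and then Theorem~\ref{thm:characterization}. The only cosmetic difference is the final step: you conclude a single 1-coset from $|\mathcal{A}|=(n-1)!$, whereas the paper invokes the intersecting property; both are valid.
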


Larose and Malvenuto \cite{larose} independently found a different proof of Theorem \ref{thm:cameronku}. More recently, Wang and Zhang \cite{wang} gave a shorter proof. All three proofs were combinatorial; none are straightforward, all requiring a certain amount of ingenuity. In \cite{GM}, Godsil and Meagher gave an algebraic proof. In \cite{tintersecting}, a proof quite similar to that of \cite{GM} is presented.

We say that an intersecting family \(\mathcal{A} \subset S_{n}\) is \emph{centred} if there exist \(i,j \in [n]\) such that every permutation in \(\mathcal{A}\) maps \(i\) to \(j\), i.e. \(\mathcal{A}\) is contained within a 1-coset. Cameron and Ku asked how large a \textit{non-centred} intersecting family can be. Experimentation suggests that the further an intersecting family is from being centred, the smaller it must be. The following are natural candidates for large, non-centred intersecting families:
\\
\begin{itemize}
\item \(\mathcal{B}=\{\sigma \in S_{n}: \sigma \textrm{ fixes at least two elements of }\{1,2,3\}\}\).\\
\\
This has size \(3(n-2)!-2(n-3)!\).\\
It requires the removal of \((n-2)!-(n-3)!\) permutations to make it centred.\\
\item \(\mathcal{C} = \{\sigma: \sigma(1)=1,\ \sigma \textrm{ intersects } (1\ 2)\} \cup \{(1\ 2)\}\).\\
\\
\textit{Claim:} \(|\mathcal{C}| = (1-1/e+o(1))(n-1)!\).\\
\textit{Proof of Claim:} Let \(\mathcal{D}_{n} = \{\sigma \in S_{n}:\ \sigma(i)\neq i\ \forall i \in [n]\}\) be the set of \textit{derangements} of \([n]\) (permutations without fixed points); let \(d_{n} = |\mathcal{D}_{n}|\) be the number of derangements of \([n]\). By the inclusion-exclusion formula,
\[d_{n} = \sum_{i=0}^{n} (-1)^{i} {\binom{n}{i}}(n-i)! = n! \sum_{i=0}^{n}\frac{(-1)^{i}}{i!} = n!(1/e + o(1)).\]
Note that a permutation which fixes 1 intersects \((1\ 2)\) if and only if it has a fixed point greater than \(2\). The number of permutations fixing 1 alone is clearly \(d_{n-1}\); the number of permutations fixing 1 and 2 alone is clearly \(d_{n-2}\), so the number of permutations fixing 1 and some other point greater than 2 is \((n-1)!-d_{n-1}-d_{n-2}\). Hence,
\[|\mathcal{C}| = (n-1)!-d_{n-1}-d_{n-2} = (1-1/e+o(1))(n-1)!,\]
as required.\\
\\
Note that \(\mathcal{C}\) can be made centred just by removing \((1\ 2)\).
\end{itemize}
\vspace{\baselineskip}

For \(n \leq 5\), \(\mathcal{B}\) and \(\mathcal{C}\) have the same size; for \(n \geq 6\), \(\mathcal{C}\) is larger. Cameron and Ku \cite{cameron} made the following conjecture.

\begin{conjecture}
\label{conj:cameronku}
If \(n \geq 6\), and \(\mathcal{A} \subset S_n\) is a non-centred intersecting family, then \(|\mathcal{A}| \leq |\mathcal{C}|\). Equality holds only if \(\mathcal{A}\) is a `double translate' of \(\mathcal{C}\), meaning that there exist permutations \(\sigma,\pi \in S_n\) such that \(\mathcal{A} = \sigma \mathcal{C} \pi\).
\end{conjecture}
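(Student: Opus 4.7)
The plan is to bootstrap the weak Cameron--Ku statement (Conjecture~\ref{conj:weakcameronku}), which is established in this paper via Theorem~\ref{thm:main}, into the sharp conjecture above. Throughout, write $f = \mathbf{1}_{\mathcal{A}}$ and $c = |\mathcal{A}|/(n-1)!$; since $|\mathcal{C}|/(n-1)! = 1 - 1/e + o(1)$, the task is to rule out $c > 1 - 1/e + o(1)$ for non-centred intersecting $\mathcal{A}$ and to pin down the equality cases.

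First I would produce the Fourier-analytic input required by Theorem~\ref{thm:main}. Intersecting families are exactly the independent sets in the derangement graph $\Gamma_n = \Cay(S_n, \mathcal{D}_n)$, a normal Cayley graph to which Theorem~\ref{thm:normalcayley} applies. The Hoffman (ratio) bound for $\Gamma_n$ is tight at $(n-1)!$, and --- critically --- the eigenvalue attached to $U_1$ is the unique negative eigenvalue achieving tightness. A character estimate on derangements (of Deza--Frankl type, refined quantitatively) shows that every other irreducible representation contributes an eigenvalue strictly smaller in absolute value by a definite multiplicative factor. From this one extracts an inequality of the form $\mathbb{E}[(f - f_1)^2] \leq C(1-c)c/n$, so that once $1-c$ is bounded away from $0$ by less than some absolute constant (which one may assume after applying Conjecture~\ref{conj:weakcameronku}), the hypotheses of Theorem~\ref{thm:main} hold with $\epsilon$ small.

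Theorem~\ref{thm:main} then places $\mathcal{A}$ within symmetric difference $O(c^2/n)(n-1)!$ of a single $1$-coset $T_{ij}$. Write $X = T_{ij} \setminus \mathcal{A}$ and $Y = \mathcal{A} \setminus T_{ij}$, both of size $o((n-1)!)$. The intersecting property now does the sharp combinatorial work. Every $\sigma \in Y$ has $\sigma(i) \neq j$ and must agree at some coordinate in $[n]\setminus\{i\}$ with every $\pi \in T_{ij} \cap \mathcal{A}$; equivalently, each $\sigma \in Y$ forbids from $\mathcal{A}$ all $\pi \in T_{ij}$ that are derangements of $\sigma$ on $[n]\setminus\{i\}$, a set of size $d_{n-1}$. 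A double-counting of $|X|$ against $|Y|$, exploiting that the forbidden subsets indexed by distinct $\sigma,\sigma' \in Y$ overlap in at most $\approx d_{n-2}$ permutations, yields $|\mathcal{A}| \leq (n-1)! - d_{n-1} - d_{n-2} = |\mathcal{C}|$. Equality forces $|Y| = 1$ and $X$ to be exactly the set of permutations fixing $i$ that are fixed-point-free off $\{i,\sigma^{-1}(j)\}$, which is the structure of $\mathcal{C}$ up to a double translate.

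The hardest part is the third step. The output of Theorem~\ref{thm:main} has error term of order $c^2(n-1)!/n$, which for $c = \Theta(1)$ is a constant fraction of $(n-1)!/n$ --- enough to localize $\mathcal{A}$ to a neighbourhood of a single $1$-coset, but only just. Converting this qualitative closeness into the sharp inequality $|\mathcal{A}| \leq |\mathcal{C}|$ requires pushing the $X$--$Y$ double-counting down to genuinely lower-order terms, which is delicate precisely when $|X|$ and $|Y|$ are comparable to the slack in the Hoffman bound. This is the point at which the intersecting condition must substitute for the spectral gap, and it is presumably why the authors list Conjecture~\ref{conj:cameronku} separately rather than as an immediate corollary of Theorem~\ref{thm:main}.
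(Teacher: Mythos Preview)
The paper does not itself prove Conjecture~\ref{conj:cameronku}; it is stated as a conjecture of Cameron and Ku, proved for all sufficiently large $n$ in \cite{cameronkuconj}. The paper only proves the weaker Conjecture~\ref{conj:weakcameronku}. What the paper \emph{does} say is that the four-stage argument of \cite{cameronkuconj} (spectral concentration; weak structure from Fourier; a cross-intersecting bootstrap; a combinatorial stability step) can be shortened by replacing the second and third stages with a single application of Theorem~\ref{thm:main}. Your outline matches this streamlined route, so at the level of strategy you are aligned with what the paper suggests.

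However, your third step contains genuine gaps. First, a minor miscount: for $\sigma \notin T_{ij}$, the number of $\pi \in T_{ij}$ disjoint from $\sigma$ is $d_{n-1}+d_{n-2}$, not $d_{n-1}$ (the paper computes this explicitly in its proof of Conjecture~\ref{conj:weakcameronku}). More seriously, your inclusion--exclusion for $|Y|\geq 2$ is asserted rather than argued: the claim that the forbidden sets for distinct $\sigma,\sigma' \in Y$ overlap in at most $\approx d_{n-2}$ elements is not obvious and in general depends on how $\sigma$ and $\sigma'$ interact; a naive union bound does not yield $|\mathcal{A}| \leq |\mathcal{C}|$ once $|Y|$ exceeds a bounded constant. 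The argument in \cite{cameronkuconj} does not proceed via such a double count at all, but through a separate extremal result on products of sizes of cross-intersecting families followed by a dedicated stability analysis; the paper is explicit that Theorem~\ref{thm:main} removes only the bootstrap stage, not the final combinatorial stability stage. Finally, any route through Theorem~\ref{thm:main} or Lemma~\ref{lemma:stabilityhoffman} yields the conclusion only for $n$ sufficiently large, not for all $n \geq 6$ as the conjecture demands; your proposal does not address the small-$n$ regime.
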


This was proved for all sufficiently large \(n\) by the first author in \cite{cameronkuconj}. To prove it, he first shows that if \(\mathcal{A} \subset S_n\) is an intersecting family with \(|\mathcal{A}| = \Omega((n-1)!)\), then the Fourier transform of \({\bf{1}}_{\mathcal{A}}\) is highly concentrated on the first two irreducible representations of \(S_n\). Secondly, he appeals to a weak version of Theorem \ref{thm:main}, namely that if \(\mathcal{A} \subset S_n\) has \(|\mathcal{A}| = \Omega((n-1)!)\), and the Fourier transform of \({\bf{1}}_{\mathcal{A}}\) is highly concentrated on the first two irreducible representations, then there exists a 1-coset \(T_{ij}\) such that \(|\mathcal{A} \cap T_{ij}| \geq \omega((n-2)!)\). Thirdly, he uses the fact that \(\mathcal{A}\) is intersecting to `bootstrap' this weak statement, showing that in fact, \(|\mathcal{A} \cap T_{ij}| \geq \Omega((n-1)!)\). This is done by showing that if $|\mathcal{A} \cap T_{ij}|$ is somewhat large, then $|\mathcal{A} \cap T_{ik}|$ must be very small for each $k \neq j$, using an extremal result on the products of the sizes of cross-intersecting families of permutations. Fourthly, he uses a combinatorial stability argument to deduce that almost all of \(T_{ij}\) is contained within \(\mathcal{A}\). Note that applying Theorem \ref{thm:main} leads to a slicker proof, as it allows us to conclude straight away that \(|\mathcal{A} \cap T_{ij}| \geq \Omega((n-1)!)\), eliminating the third stage of the argument.

Cameron and Ku also made the following weaker conjecture.
\begin{conjecture2}
There exists \(\delta >0\) such that for all \(n\), if \(\mathcal{A} \subset S_n\) is an intersecting family with \(|\mathcal{A}| \geq (1-\delta)(n-1)!\), then \(\mathcal{A}\) is contained within a 1-coset of \(S_n\).
\end{conjecture2}

Note that Conjecture \ref{conj:weakcameronku} follows immediately from Conjecture \ref{conj:cameronku}. Again, the most natural proof of Conjecture \ref{conj:weakcameronku} is via Theorem \ref{thm:main}. We give this (new) proof below.

First, we give some background on eigenvalues techniques for studying intersecting families in $S_n$.

Let \(\Gamma_n\) denote the {\em derangement graph} on \(S_n\), where two permutations are joined iff they disagree everywhere. This is simply the Cayley graph on \(S_n\) generated by the set of derangements of \([n]\). As above, we let \(\mathcal{D}_n\) denote the set of derangements of \([n]\), and we let \(d_n = |\mathcal{D}_n|\); then \(\Gamma_n\) is $d_n$-regular. Observe that an intersecting family in \(S_n\) is precisely an independent set in \(\Gamma_n\).

The following theorem of Hoffman gives an upper bound on the size of an independent set in a regular graph in terms of the eigenvalues of the adjacency matrix of the graph.

\begin{theorem}[Hoffman's theorem]
\label{thm:hoffman}
Let \(G = (V,E)\) be a \(d\)-regular graph, whose adjacency matrix \(A\) has eigenvalues \(d = \lambda_{1} \geq \lambda_{2} \geq \ldots \geq \lambda_{|V|}\).  Let \(X \subset V(G)\) be an independent set, and let \(\alpha = |X|/|V|\). Then
\[|X| \leq \frac{-\lambda_{|V|}}{d-\lambda_{|V|}}|V|.\]
If equality holds, then \({\bf{1}}_{X} - \alpha \mathbf{f} \in E_{A}(\lambda_{|V|})\), the \(\lambda_{|V|}\)-eigenspace of \(A\), where $\mathbf{f}$ denotes the all-1's vector.
\end{theorem}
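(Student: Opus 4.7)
The plan is to use the standard spectral / Rayleigh-quotient argument: exploit the independence of $X$ to force the quadratic form $\langle {\bf{1}}_X, A{\bf{1}}_X\rangle$ to vanish, and then bound this form from below using the bottom eigenvalue of $A$. Since $G$ is $d$-regular, the all-ones vector $\mathbf{f}$ is an eigenvector of $A$ with eigenvalue $d = \lambda_1$, and every other eigenspace of $A$ lies in $\mathbf{f}^\perp$. I would decompose
$${\bf{1}}_X = \alpha \mathbf{f} + w, \qquad w \perp \mathbf{f},$$
and note by Pythagoras that $\|w\|^2 = \|{\bf{1}}_X\|^2 - \|\alpha \mathbf{f}\|^2 = \alpha |V| - \alpha^2 |V| = \alpha(1-\alpha)|V|$.

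Next, since $X$ is independent, the quantity $\langle {\bf{1}}_X, A{\bf{1}}_X\rangle$ counts twice the number of edges with both endpoints in $X$, and hence equals $0$. Expanding via the decomposition, the cross terms vanish because $A\mathbf{f} = d\mathbf{f}$ and $w \perp \mathbf{f}$, yielding
$$0 \;=\; \langle {\bf{1}}_X, A{\bf{1}}_X\rangle \;=\; \alpha^2 d\,|V| \;+\; \langle w, Aw\rangle.$$
The Rayleigh quotient inequality gives $\langle w, Aw\rangle \geq \lambda_{|V|}\|w\|^2 = \lambda_{|V|}\alpha(1-\alpha)|V|$. Substituting, dividing by $\alpha|V|$ (the case $\alpha = 0$ being vacuous), and rearranging produces $\alpha \leq -\lambda_{|V|}/(d - \lambda_{|V|})$; here one uses that $\lambda_{|V|} \leq 0$ (which follows from $\mathrm{tr}(A) = 0$, so $d - \lambda_{|V|} > 0$).

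For the equality case, I would simply trace back through the chain of estimates: the only inequality we applied was the Rayleigh bound $\langle w, Aw\rangle \geq \lambda_{|V|}\|w\|^2$. If $|X|$ meets the Hoffman bound, then equality must hold throughout, which forces $w$ to lie entirely in the bottom eigenspace $E_A(\lambda_{|V|})$. Since $w = {\bf{1}}_X - \alpha \mathbf{f}$, this is precisely the claimed conclusion. The argument is purely linear-algebraic and presents no serious obstacle; the only points requiring a line of care are the sign of $\lambda_{|V|}$ and the trivial case $\alpha = 0$.
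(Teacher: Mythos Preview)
Your argument is correct and is the standard spectral proof of Hoffman's bound. Note, however, that the paper does not supply its own proof of this theorem: it is stated as a classical result (with a reference to Hoffman) and then applied. So there is nothing in the paper to compare against; your write-up would serve perfectly well as a self-contained proof, and the equality analysis (tracing the single Rayleigh inequality back to $w \in E_A(\lambda_{|V|})$) is exactly what is needed for the downstream use in the paper.
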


Note that \(\mathcal{D}_n\) is a union of conjugacy classes of \(S_n\), so \(\Gamma_n\) is a normal Cayley graph, and therefore Theorem \ref{thm:normalcayley} can be used to calculate the eigenvalues of its adjacency matrix.

Using Theorem \ref{thm:normalcayley}, together with an analysis of symmetric functions, Renteln \cite{renteln} proved the following.

\begin{theorem}[Renteln]
\label{thm:renteln}
The minimum eigenvalue of \(\Gamma_n\) is \(-d_n/(n-1)\).
\end{theorem}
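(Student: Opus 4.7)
The plan is to invoke Theorem \ref{thm:normalcayley}. Since $\mathcal{D}_n$ is a union of conjugacy classes of $S_n$, the derangement graph $\Gamma_n$ is a normal Cayley graph, and its eigenvalues are indexed by partitions $\lambda \vdash n$ via
$$\xi_\lambda \;=\; \frac{1}{f^\lambda}\sum_{g \in \mathcal{D}_n} \chi^\lambda(g),$$
where $f^\lambda$ is the dimension of $[\lambda]$. The trivial representation gives $\xi_{(n)} = d_n$ (as it should, since $\Gamma_n$ is $d_n$-regular). For the standard representation $[n-1,1]$, the character at a derangement is $\mathrm{fix}(g)-1 = -1$, so $\xi_{(n-1,1)} = -d_n/(n-1)$. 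Hence it suffices to prove $\xi_\lambda \geq -d_n/(n-1)$ for every $\lambda \notin \{(n),(n-1,1)\}$.

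To make the remaining task combinatorial, I would apply inclusion-exclusion on the fixed-point set. Writing $\mathbf{1}[g\in\mathcal{D}_n] = \sum_{S \subseteq \mathrm{Fix}(g)}(-1)^{|S|}$ and noting that the permutations fixing a prescribed $k$-set form an embedded $S_{n-k}$, one obtains
$$\sum_{g \in \mathcal{D}_n}\chi^\lambda(g) \;=\; \sum_{k=0}^{n}(-1)^k \binom{n}{k}\sum_{g \in S_{n-k}} \chi^\lambda(g).$$
By Frobenius reciprocity and the iterated branching rule, the inner sum equals $(n-k)!\, f^{\lambda/(n-k)}$, where $f^{\lambda/(n-k)}$ is the number of standard Young tableaux of the skew shape $\lambda/(n-k)$ (and is zero unless $\lambda_1 \geq n-k$). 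This yields the closed form
$$\xi_\lambda \;=\; \frac{n!}{f^\lambda}\sum_{k=0}^{n}\frac{(-1)^k\, f^{\lambda/(n-k)}}{k!}.$$
One can quickly cross-check this against the two computed cases: $\lambda = (n)$ gives $f^{(n)/(n-k)} = 1$ for all $k$, recovering $\xi_{(n)} = d_n$, and $\lambda = (n-1,1)$ gives $f^{(n-1,1)/(n-k)} = k$ for $1 \le k \le n-1$ and $n-1$ for $k=n$, recovering $\xi_{(n-1,1)} = -d_n/(n-1)$.

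The hard part will be the resulting inequality
$$\sum_{k=0}^{n}\frac{(-1)^k\, f^{\lambda/(n-k)}}{k!} \;\geq\; -\frac{f^\lambda\, d_n}{n!\,(n-1)}\qquad(\lambda\neq(n),(n-1,1)).$$
My plan is to route through symmetric functions: the alternating sum on the left is (up to normalisation) a principal specialisation/coefficient in the Jacobi--Trudi expansion $s_\lambda = \det(h_{\lambda_i-i+j})$ of the Schur function, so the sum can be written as a determinant whose entries are themselves derangement-like numbers $d_{n-j}$. I would then combine this determinantal expression with the hook length formula for $f^\lambda$ to compare $\xi_\lambda$ with $\xi_{(n-1,1)}$. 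The analysis splits naturally: for partitions ``close'' to $(n-1,1)$ in the Young lattice (in particular, two-row shapes $(n-j,j)$ and hooks $(n-k,1^k)$) one does the estimate by direct calculation; for ``deep'' partitions one exploits the fact that $f^\lambda$ grows much faster than the skew numbers $f^{\lambda/\mu}$ appearing in the sum, making $|\xi_\lambda|$ small. The delicate endgame is the sharpness: the bound must be tight only at $\lambda = (n-1,1)$, so a uniform monotonicity argument along chains in the Young lattice (or an interlacing-type argument between $\Gamma_n$ and $\Gamma_{n-1}$) will likely be needed to rule out any other extremiser.
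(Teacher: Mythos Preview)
The paper does not actually prove Theorem \ref{thm:renteln}; it is quoted as a result of Renteln and cited, with the remark that Renteln's argument combines Theorem \ref{thm:normalcayley} with ``an analysis of symmetric functions'', and that an alternative proof avoiding symmetric functions appears in \cite{cameronkuconj}. So there is nothing in the paper to compare your argument against, beyond the one-line description of Renteln's method.

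Your setup is correct and is in the spirit of Renteln's approach: the eigenvalue formula via Theorem \ref{thm:normalcayley}, the computation of $\xi_{(n-1,1)} = -d_n/(n-1)$ from $\chi^{(n-1,1)} = \mathrm{fix}-1$, and the inclusion--exclusion/branching identity
\[
\xi_\lambda \;=\; \frac{n!}{f^\lambda}\sum_{k=0}^{n}\frac{(-1)^k\, f^{\lambda/(n-k)}}{k!}
\]
are all valid and match what Renteln uses. However, what you have written is a plan, not a proof: the entire content of the theorem is the inequality $\xi_\lambda \geq -d_n/(n-1)$ for $\lambda \neq (n),(n-1,1)$, and for this you only offer a sketch (Jacobi--Trudi determinants, case analysis on hooks and two-row shapes, a vague ``$f^\lambda$ grows faster'' argument for deep shapes, and an unspecified monotonicity/interlacing step). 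None of these estimates is actually carried out, and in Renteln's paper this is precisely where the real work lies --- it requires careful manipulation of shifted Schur functions and is not a routine bound. As written, then, your proposal establishes only that $-d_n/(n-1)$ \emph{is} an eigenvalue, not that it is the minimum one. If your goal is to match the level of the paper, you can simply cite \cite{renteln} (or \cite{cameronkuconj}) as the paper does; if you want a self-contained proof, the hard analytic step must be filled in.
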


Plugging the value \(\lambda_{|V|} = -d_n/(n-1)\) into Theorem \ref{thm:hoffman} yields an alternative (much longer!) proof of Theorem \ref{thm:dezafrankl}.

In \cite{cameronkuconj}, a different proof of Theorem \ref{thm:renteln} (avoiding symmetric functions) is given; this proof shows in addition that the \(-d_n/(n-1)\) eigenspace is precisely \(U_{(n-1,1)}\). This can be used to give an alternative proof of Theorem \ref{thm:cameronku}, essentially the one presented in \cite{tintersecting} and \cite{GM}.

\begin{proof}[Proof of Theorem \ref{thm:cameronku}]
Let \(\mathcal{A} \subset S_n\) be an intersecting family of permutations with \(|\mathcal{A}|=(n-1)!\). It follows from the equality part of Hoffman's theorem that \(\mathbf{1}_{\mathcal{A}} -(|\mathcal{A}|/n!)\mathbf{f} \in U_{(n-1,1)}\). Since \(U_{(n)}\) is the space of constant functions, it follows that \(\mathbf{1}_{\mathcal{A}} \in U_{(n)}\oplus U_{(n-1,1)}=U_1\). Theorem \ref{thm:characterization} then implies that \(\mathcal{A}\) must be a disjoint union of 1-cosets of \(S_n\). Since \(\mathcal{A}\) is intersecting, it must be a single 1-coset of \(S_n\).
\end{proof}

To prove Conjecture \ref{conj:weakcameronku}, we need the following `stability version' of Hoffman's bound, proved in \cite[Lemma 3.2]{cameronkuconj}.
\begin{lemma}
\label{lemma:stabilityhoffman}
Let \(G=(V,E)\) be a \(d\)-regular graph, whose adjacency matrix \(A\) has eigenvalues \(d = \lambda_{1} \geq \lambda_{2} \geq \ldots \geq \lambda_{|V|}\). Let \(K = \max\{i: \lambda_i > \lambda_{|V|}\}\). Let \(X \subset V(G)\) be an independent set; let \(\alpha = |X|/|V|\). Let \(U = \textrm{Span}\{\mathbf{f}\} \oplus E(\lambda_{|V|})\) be the direct sum of the subspace of constant vectors and the \(\lambda_{|V|}\)-eigenspace of \(A\). Let \(P_U\) denote orthogonal projection onto the subspace \(U\). Then
\[||{\bf{1}}_X - P_U({\bf{1}}_X)||_2^2 \leq \frac{(1-\alpha)|\lambda_{|V|}| - d \alpha}{|\lambda_{|V|}|-|\lambda_{K}|}\alpha.\]
\end{lemma}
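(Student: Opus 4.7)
The approach is a quantitative reworking of the argument that gives Hoffman's bound itself. The plan is to expand $\mathbf{1}_X$ in an orthonormal eigenbasis of $A$ and to combine the two standard relations that arise from (i)~$\mathbf{1}_X$ being Boolean and (ii)~$X$ being an independent set; the proof ends with a one-line rearrangement.

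First I would fix an orthonormal eigenbasis (with respect to the normalised inner product $\langle f,g\rangle = \mathbb{E}[f\bar g]$) $v_1 = \mathbf{f}, v_2, \ldots, v_{|V|}$ of $A$, with $A v_i = \lambda_i v_i$ and $\|v_i\|_2 = 1$, and write $\mathbf{1}_X = \sum_i c_i v_i$. The coefficient $c_1 = \langle \mathbf{1}_X,\mathbf{f}\rangle = \alpha$ drops out immediately, and from the definition of $U$ we have $P_U(\mathbf{1}_X) = \alpha \mathbf{f} + \sum_{i > K} c_i v_i$, so the quantity of interest is $\|\mathbf{1}_X - P_U(\mathbf{1}_X)\|_2^2 = \sum_{i=2}^K c_i^2 =: T$. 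Setting $S = \sum_{i > K} c_i^2$, the Boolean identity $\|\mathbf{1}_X\|_2^2 = \mathbb{E}[\mathbf{1}_X] = \alpha$ gives
\[\alpha^2 + T + S = \alpha,\]
and the independence condition $\langle \mathbf{1}_X, A\mathbf{1}_X\rangle = 2 e(X)/|V| = 0$ gives
\[d\alpha^2 + \sum_{i=2}^K \lambda_i c_i^2 + \lambda_{|V|} S = 0.\]

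Finally I would bound the middle sum using the trivial observation that $\lambda_i \geq \lambda_K \geq -|\lambda_K|$ for every $i\in\{2,\ldots,K\}$ (the second inequality holds whether $\lambda_K$ is positive or negative), which yields $\sum_{i=2}^K \lambda_i c_i^2 \geq -|\lambda_K|\,T$. Plugging this into the second identity, substituting $S = \alpha(1-\alpha) - T$ from the first, and isolating $T$ produces
\[(|\lambda_{|V|}| - |\lambda_K|)\,T \leq |\lambda_{|V|}|\alpha(1-\alpha) - d\alpha^2,\]
which, after dividing by $|\lambda_{|V|}| - |\lambda_K|$ (the regime where the bound is informative), is exactly the claimed inequality. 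The argument has no genuine obstacle: the only things to watch are the normalisation of the inner product (so that $\|\mathbf{1}_X\|_2^2 = \alpha$ rather than $|X|$) and the sign conventions on $\lambda_{|V|}$ and $\lambda_K$, neither of which is subtle. As a sanity check, when $\alpha$ equals Hoffman's bound $|\lambda_{|V|}|/(d+|\lambda_{|V|}|)$ the right-hand side vanishes, forcing $T=0$ and recovering the equality case of Theorem~\ref{thm:hoffman}.
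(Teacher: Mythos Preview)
The paper does not actually prove this lemma; it merely cites it from \cite[Lemma 3.2]{cameronkuconj}. Your argument is correct, and it is exactly the approach the paper uses for the analogous Laplacian version, Lemma~\ref{lemma:alonstability}: expand $\mathbf{1}_X$ in an orthonormal eigenbasis, combine the Parseval identity $\|\mathbf{1}_X\|_2^2=\alpha$ with the independence identity $\langle \mathbf{1}_X, A\mathbf{1}_X\rangle=0$, bound the intermediate eigenvalues below, and rearrange. There are no gaps; the only implicit assumption, $\lambda_{|V|}<0$, holds automatically once $X$ is a nonempty independent set in a graph with at least one edge.
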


Recall the following fact from \cite{cameronkuconj}:

\begin{fact}
The derangement graph \(\Gamma_n\) has \(|\lambda_K| = O(d_n/n^2)\).
\end{fact}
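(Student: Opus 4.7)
The plan is to use Theorem \ref{thm:normalcayley} to reduce the claim to a character estimate, then evaluate that estimate via inclusion--exclusion on fixed points. Since $\mathcal{D}_n$ is a union of conjugacy classes, $\Gamma_n$ is a normal Cayley graph and its eigenvalues are indexed by partitions $\alpha \vdash n$:
$$\lambda_\alpha \;=\; \frac{1}{\dim[\alpha]} \sum_{g \in \mathcal{D}_n} \chi_\alpha(g).$$
The trivial representation gives $\lambda_{(n)} = d_n$, and Theorem \ref{thm:renteln} yields $\lambda_{(n-1,1)} = -d_n/(n-1)$. It therefore suffices to show that $|\lambda_\alpha| = O(d_n/n^2)$ for every other partition.

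To evaluate the character sum I would apply the inclusion--exclusion identity
$\mathbf{1}_{\mathcal{D}_n}(g) = \sum_{T \subseteq [n]}(-1)^{|T|}\mathbf{1}[T \subseteq \mathrm{Fix}(g)]$, obtaining
$$\sum_{g \in \mathcal{D}_n} \chi_\alpha(g) \;=\; \sum_{k=0}^{n} (-1)^k \binom{n}{k}(n-k)!\, m_{\alpha,k},$$
where $m_{\alpha,k} := \langle \chi_\alpha \!\downarrow_{S_{n-k}}, \mathbf{1}_{S_{n-k}} \rangle$ is the multiplicity of the trivial character of $S_{n-k}$ (embedded in $S_n$ as the pointwise stabiliser of $k$ points) in the restriction of $\chi_\alpha$. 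The iterated branching rule identifies $m_{\alpha,k}$ with the number of standard skew tableaux of shape $\alpha/(n-k)$, so $m_{\alpha,k}=0$ unless $\alpha_1 \geq n-k$. The crucial consequence is that for $\alpha \notin \{(n),(n-1,1)\}$, i.e.\ when $\alpha_1 \leq n-2$, the sum begins at $k = n-\alpha_1 \geq 2$, producing an extra factor of order $1/n^2$ compared with the $(n-1,1)$ case.

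The main obstacle is to control the alternating sum uniformly in $\alpha$: the multiplicities $m_{\alpha,k}$ can grow rapidly with $k$ for ``fat'' shapes, while $\dim[\alpha]$ may itself be small for ``thin'' shapes. I would split the sum into its leading $k = n-\alpha_1$ contribution and a remainder, and bound $m_{\alpha,k}/\dim[\alpha]$ uniformly using the hook-length formula for $\dim[\alpha]$; the leading term then contributes $O(d_n/n^{n-\alpha_1}) \leq O(d_n/n^2)$, and the tail sums geometrically to the same order. Alternatively, one may invoke Murnaghan--Nakayama-based bounds on normalized characters evaluated on derangements, which combined with $|\mathcal{D}_n| = d_n$ immediately yield $|\lambda_\alpha| = O(d_n/n^2)$ for every $\alpha \notin \{(n),(n-1,1)\}$, and hence $|\lambda_K| = O(d_n/n^2)$.
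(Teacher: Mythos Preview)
The paper does not prove this fact; it merely cites it from \cite{cameronkuconj}. So the relevant question is whether your argument stands on its own.

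Your setup is correct: the eigenvalues are indexed by partitions, and inclusion--exclusion gives
\[
\lambda_\alpha \;=\; \frac{n!}{\dim[\alpha]} \sum_{k \geq n-\alpha_1} \frac{(-1)^k}{k!}\, f^{\alpha/(n-k)},
\]
with the sum starting at $k \geq 2$ whenever $\alpha_1 \leq n-2$. That is the right idea. However, the crucial bounding step is not carried out, and your stated claim about the leading term is too strong. You assert that the $k = n-\alpha_1 =: t$ term contributes $O(d_n/n^{t})$; this would require $\dim[\alpha] = \Omega(n^{t}\dim[\mu]/t!)$ uniformly in $\alpha$ (where $\mu = (\alpha_2,\alpha_3,\ldots) \vdash t$). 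That fails once $t$ is comparable to $n$. For instance, $\alpha = (1^n)$ has $t = n-1$, $\dim[\alpha]=1$, $\dim[\mu]=1$, and the leading term equals $\pm n$, certainly not $O(d_n/n^{n-1})$. More generally, for ``fat'' shapes the heuristic ``each extra unit of $t$ buys a factor of $1/n$'' breaks down completely.

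Of course, you only need the weaker bound $O(d_n/n^2)$, and that does hold in all these cases---but proving it \emph{uniformly} over all partitions from your alternating sum is precisely the work you have not done. The tail does not obviously ``sum geometrically'' (the ratios $m_{\alpha,k}/\dim[\alpha]$ are not controlled by hook lengths in any way that makes this immediate), and the fallback appeal to unspecified ``Murnaghan--Nakayama-based bounds on normalized characters evaluated on derangements'' is not a proof. To close the gap along these lines one needs either an explicit closed form for $\lambda_\alpha$ (as in the determinantal or recursive formulas used in \cite{renteln} and \cite{cameronkuconj}) or a genuine uniform normalized-character estimate; your proposal supplies neither.
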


Substituting this into Lemma \ref{lemma:stabilityhoffman} shows that a large intersecting family in $S_n$ must have its characteristic vector close to \(U_1\):

\begin{lemma}
\label{lemma:largeclose}
If $\mathcal{A} \subset S_n$ is an intersecting family of permutations with \(|\mathcal{A}| = \alpha n!\), then
$$
||{\bf{1}}_{\mathcal{A}} - P_{U_1}({\bf{1}}_\mathcal{A})||_2^2 \leq (1-\alpha n)(1+O(1/n))\alpha.$$
\end{lemma}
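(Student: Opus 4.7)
The plan is to apply Lemma \ref{lemma:stabilityhoffman} directly to the derangement graph $\Gamma_n$, since an intersecting family $\mathcal{A} \subset S_n$ is exactly an independent set in $\Gamma_n$. Almost everything we need has already been set up: $\Gamma_n$ is $d_n$-regular; by Renteln's theorem (Theorem \ref{thm:renteln}) its minimum eigenvalue is $\lambda_{|V|} = -d_n/(n-1)$; and the remark right after Theorem \ref{thm:renteln} tells us that the corresponding eigenspace is precisely $U_{(n-1,1)}$. Therefore the subspace $U = \mathrm{Span}\{\mathbf{f}\} \oplus E(\lambda_{|V|})$ appearing in Lemma \ref{lemma:stabilityhoffman} coincides with $U_{(n)} \oplus U_{(n-1,1)} = U_1$, so $P_U = P_{U_1}$.

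Next I would plug the numerical values into the bound from Lemma \ref{lemma:stabilityhoffman}. For the numerator,
\[
(1-\alpha)|\lambda_{|V|}| - d\alpha \;=\; (1-\alpha)\frac{d_n}{n-1} - d_n \alpha \;=\; \frac{d_n}{n-1}\bigl(1 - \alpha n\bigr).
\]
For the denominator, the cited Fact says $|\lambda_K| = O(d_n/n^2)$, so
\[
|\lambda_{|V|}| - |\lambda_K| \;=\; \frac{d_n}{n-1} - O\!\left(\frac{d_n}{n^2}\right) \;=\; \frac{d_n}{n-1}\bigl(1 - O(1/n)\bigr).
\]
Taking the ratio, the factors of $d_n/(n-1)$ cancel, and $1/(1-O(1/n)) = 1 + O(1/n)$ for $n$ large, giving
\[
\frac{(1-\alpha)|\lambda_{|V|}| - d\alpha}{|\lambda_{|V|}| - |\lambda_K|} \;=\; (1 - \alpha n)\bigl(1 + O(1/n)\bigr).
\]
Multiplying by $\alpha$ and invoking Lemma \ref{lemma:stabilityhoffman} with $X = \mathcal{A}$ yields the desired inequality.

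There is essentially no obstacle here; the work is a one-line substitution into Lemma \ref{lemma:stabilityhoffman}, using Renteln's identification of the bottom eigenspace of $\Gamma_n$ and the quantitative gap $|\lambda_K| = O(d_n/n^2)$. The only mild subtlety is the algebraic rearrangement $(1-\alpha)/(n-1) - \alpha = (1 - \alpha n)/(n-1)$, which produces the crucial factor $1 - \alpha n$ (non-negative by the Deza--Frankl bound $|\mathcal{A}| \leq (n-1)!$, i.e.\ $\alpha \leq 1/n$) that makes the bound meaningful when $\mathcal{A}$ has size close to the extremal value $(n-1)!$.
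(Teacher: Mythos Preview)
Your proposal is correct and follows essentially the same approach as the paper: apply Lemma \ref{lemma:stabilityhoffman} to the derangement graph $\Gamma_n$, identify $U$ with $U_1$ via Renteln's result, and substitute $|\lambda_{|V|}| = d_n/(n-1)$ and $|\lambda_K| = O(d_n/n^2)$ to obtain the bound. The algebraic simplification and the final form $(1-\alpha n)(1+O(1/n))\alpha$ are exactly what the paper does.
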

\begin{proof}
Let \(\mathcal{A} \subset S_n\) be an intersecting family with \(|\mathcal{A}| = \alpha n!\). Applying Lemma \ref{lemma:stabilityhoffman} with \(G = \Gamma_n\) and \(U = U_1 = U_{(n)} \oplus U_{(n-1,1)}\) yields:
\begin{align*}
||{\bf{1}}_{\mathcal{A}} - P_{U_1}({\bf{1}}_\mathcal{A})||_2^2 &\leq  \frac{(1-\alpha)d_{n}/(n-1) -d_{n}\alpha}{d_{n}/(n-1)-|\lambda_{K}|}\alpha\\
&=  \frac{1-\alpha -\alpha(n-1)}{1-(n-1)|\lambda_{K}|/d_{n}}\alpha\\
&=  \frac{1-\alpha n}{1-O(1/n)}\alpha\\
&=  (1-\alpha n)(1+O(1/n))\alpha,
\end{align*}
proving the lemma.
\end{proof}

We now combine Lemma \ref{lemma:largeclose} with Theorem \ref{thm:main} to show that a large intersecting family in \(S_n\) must be close to a 1-coset, an intermediate step towards proving Conjecture \ref{conj:weakcameronku}.

\begin{proposition}
\label{prop:close}
Given \(\phi>0\), there exists \(\delta = \delta(\phi) >0\) such that the following holds. If \(\mathcal{A} \subset S_n\) is an intersecting family of permutations with \(|\mathcal{A}| \geq (1-\delta)(n-1)!\), then there exists a 1-coset \(T_{ij}\) such that
\[|\mathcal{A} \triangle T_{ij}| \leq \phi(n-1)!.\]
\end{proposition}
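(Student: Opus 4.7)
The plan is to combine Lemma~\ref{lemma:largeclose}, which translates the intersecting hypothesis into a Fourier-concentration statement on $U_1$, with our main quasi-stability result Theorem~\ref{thm:main}, and then observe that the rounding in Theorem~\ref{thm:main} is forced to equal $1$ because $c = |\mathcal{A}|/(n-1)!$ is close to $1$. The nice feature is that the combinatorial ``bootstrap'' step used in the original proof in~\cite{cameronkuconj} becomes unnecessary: the fact that $\round c = 1$ comes for free from the roundness.

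First I would dispose of small $n$: fix an absolute constant $n_0$ (to be chosen depending on $\phi$). If $\delta < 1/(n_0-1)!$, then for every $n \leq n_0$ we have $|\mathcal{A}| > (n-1)! - 1$, forcing $|\mathcal{A}| = (n-1)!$, and Theorem~\ref{thm:cameronku} then identifies $\mathcal{A}$ with a 1-coset, yielding $|\mathcal{A} \triangle T_{ij}| = 0$. So we may assume $n > n_0$.

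Set $c = |\mathcal{A}|/(n-1)!$ and $f = {\bf{1}}_\mathcal{A}$; by Theorem~\ref{thm:dezafrankl}, $1-\delta \leq c \leq 1$. Applying Lemma~\ref{lemma:largeclose} with $\alpha = c/n$ gives
\[
\mathbb{E}[(f - f_1)^2] \leq (1-c)(1+O(1/n)) \cdot \frac{c}{n} \leq \epsilon \cdot \frac{c}{n},
\]
where $\epsilon := (1-c)(1+O(1/n)) = O(\delta)$. For $\delta$ small and $n_0$ large we have $\epsilon \leq \epsilon_0$, and the hypothesis $c \leq n/2$ is trivial since $c \leq 1$. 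Hence Theorem~\ref{thm:main} produces a Boolean function $\tilde f = {\bf{1}}_\mathcal{C}$, where $\mathcal{C}$ is a union of $\round c$ 1-cosets, satisfying
\[
|\mathcal{A} \triangle \mathcal{C}| = n! \cdot \mathbb{E}[(f-\tilde f)^2] \leq C_0 c^2(\epsilon^{1/2}+1/n)(n-1)! = \bigl(O(\sqrt\delta) + O(1/n)\bigr)(n-1)!.
\]
Now the crucial observation: since $1/2 \leq 1-\delta \leq c \leq 1 < 3/2$ for $\delta \leq 1/2$, the nearest integer $\round c$ equals $1$. Therefore $\mathcal{C}$ is a single 1-coset $T_{ij}$, and the displayed bound gives $|\mathcal{A} \triangle T_{ij}| \leq \phi(n-1)!$ once $\delta$ is small enough and $n_0$ is large enough.

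The argument is essentially a direct composition of two existing results, so the main (and essentially only) obstacle is parameter bookkeeping: one should first pick $n_0 = n_0(\phi)$ so that the $O(1/n)$ contribution is $< \phi/2$, and then pick $\delta = \delta(\phi) > 0$ small enough to simultaneously ensure $O(\sqrt\delta) < \phi/2$, $\epsilon \leq \epsilon_0$, and $\delta < 1/(n_0-1)!$ (the last so that the small-$n$ case is handled). All three requirements are easily satisfied in turn.
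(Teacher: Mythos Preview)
Your proposal is correct and follows essentially the same approach as the paper's own proof: handle small $n$ via Cameron--Ku by shrinking $\delta$, apply Lemma~\ref{lemma:largeclose} to get Fourier concentration on $U_1$, invoke Theorem~\ref{thm:main}, and observe $\round c = 1$ since $c\in[1-\delta,1]$. You are simply more explicit than the paper about the small-$n$ reduction (choosing $\delta<1/(n_0-1)!$) and about why the output of Theorem~\ref{thm:main} is a single 1-coset.
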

\begin{proof}
Let $\delta>0$ to be chosen later, with $\delta<1/2$. Let \(\mathcal{A} \subset S_n\) be an intersecting family of permutations with \(|\mathcal{A}| = (1-\delta_1)(n-1)!\), where \(\delta_1 \leq \delta\). Note that, by Theorem \ref{thm:cameronku}, we may assume that \(n \geq n_0\) for some fixed \(n_0 \in \mathbb{N}\), by making \(\delta\) smaller if necessary. 

Let \(f = {\bf{1}}_{\mathcal{A}}\) denote the characteristic function of \(\mathcal{A}\), and let \(f_1 = P_{U_1}({\bf{1}}_{\mathcal{A}})\). Then, applying Lemma \ref{lemma:largeclose}, we have
\[||f-f_1||_2^2 \leq \delta_1 (1+O(1/n))(1-\delta_1)/n.\]
Hence, by Theorem \ref{thm:main}, there exists a family \(\mathcal{C}\subset S_n\) such that \(\mathcal{C} = T_{ij}\) is a 1-coset, and such that
\[|\mathcal{A} \triangle \mathcal{C}| \leq C_0 (1-\delta_1)^2 (\delta_1^{1/2}+ O(1/n))(n-1)!.\]
 Provided \(\delta\) is sufficiently small depending on \(C_0\) and \(\phi\), and \(n\) is sufficiently large depending on \(C_0\) and \(\phi\), we have
$$C_0 (1-\delta_1)^2 (\delta_1^{1/2}+ O(1/n)) \leq \phi,$$
so
\[|\mathcal{A} \triangle T_{ij}| \leq \phi (n-1)!,\]
proving the proposition.
\end{proof}

We can now give our new proof of Conjecture \ref{conj:weakcameronku}.

\begin{proof}[New proof of Conjecture \ref{conj:weakcameronku}]
Choose any \(\phi\) such that $0 < \phi < 1/e$, and let $\delta = \delta(\phi)$ be as given by Proposition \ref{prop:close}. Let \(\mathcal{A} \subset S_n\) be an intersecting family of permutations with \(|\mathcal{A}| \geq (1-\delta)(n-1)!\). Note that, by Theorem \ref{thm:cameronku}, we may assume throughout that \(n \geq n_0\), for any fixed \(n_0 \in \mathbb{N}\), by making \(\delta\) smaller if necessary.

By Proposition \ref{prop:close}, there exists a 1-coset \(T_{ij}\) such that
\begin{equation}\label{eq:oneislarge}|\mathcal{A} \triangle T_{ij}| \leq \phi(n-1)!.\end{equation}
We will show that this implies \(\mathcal{A} \subset T_{ij}\), provided $n$ is sufficiently large. Suppose for a contradiction that \(\mathcal{A} \nsubseteq T_{ij}\). Then there exists a permutation \(\tau \in \mathcal{A}\) such that \(\tau(i) \neq j\). Any permutation in \(\mathcal{A} \cap T_{ij}\) must agree with \(\tau\) at some point. But for any \(i,j \in [n]\) and any \(\tau \in S_{n}\) such that \(\tau(i) \neq j\), the number of permutations in \(S_{n}\) which map \(i\) to \(j\) and agree with \(\tau\) at some point is
\[(n-1)!-d_{n-1}-d_{n-2} = (1-1/e-o(1))(n-1)!.\]
(By double translation, we may assume that \(i=j=1\) and \(\tau = (1\ 2)\); we observed above that the number of permutations fixing \(1\) and intersecting \((1\ 2)\) is \((n-1)!-d_{n-1}-d_{n-2}\).)

Therefore, we must have
$$|\mathcal{A} \cap T_{ij}| \leq (1-1/e+o(1))(n-1)!,$$
so
$$|T_{ij} \setminus \mathcal{A}| \geq (1/e-o(1))(n-1)! > \phi(n-1)!$$
provided \(n\) is sufficiently large depending on \(\phi\), contradicting (\ref{eq:oneislarge}). This completes the proof of Conjecture \ref{conj:weakcameronku}.
\end{proof}

\subsection{Almost isoperimetric subsets of the transposition graph}
In this section, we will apply Theorem \ref{thm:main} to investigate the structure of subsets of \(S_n\) with small edge-boundary in the transposition graph. The {\em transposition graph} \(T_n\) is the Cayley graph on \(S_n\) generated by the transpositions; equivalently, two permutations are joined if, as sequences, one can be obtained from the other by transposing two elements.

In this section, we study edge-isoperimetric inequalities for \(T_n\). First, let us give some brief background on edge-isoperimetric inequalities for graphs. If \(G\) is any graph, and \(S,T \subset V(G)\), we write \(E_{G}(S,T)\) for the set of edges of \(G\) between \(S\) and \(T\), and we write \(e_{G}(S,T) = |E_{G}(S,T)|\). We write \(\partial_{G}S = E_{G}(S,S^c)\) for the set of edges of \(G\) between \(S\) and its complement; this is called the {\em edge-boundary of \(S\) in \(G\)}. An {\em edge-isoperimetric inequality for \(G\)} gives a lower bound on the minimum size of the edge-boundary of a set of size \(k\), for each integer \(k\). If \(\mathcal{A} \subset S_n\), we write \(\partial \mathcal{A} = \partial_{T_n} \mathcal{A}\) for the edge-boundary of \(\mathcal{A}\) in the transposition graph.

It would be of great interest to prove an edge-isoperimetric inequality for the transposition graph which is sharp for all set-sizes. Ben Efraim \cite{benefraim} conjectures that initial segments of the lexicographic order on \(S_n\) have the smallest edge-boundary of all sets of the same size.

\begin{definition}
If \(\sigma,\pi \in S_n\), we say that \(\sigma < \pi\) {\em in the lexicographic order on \(S_n\)} if \(\sigma(j) < \pi(j)\), where \(j = \min\{i \in [n]:\ \sigma(i) \neq \pi(i)\}\). The {\em initial segment of size \(k\) of the lexicographic order on \(S_n\)} simply means the smallest \(k\) elements of \(S_n\) in the lexicographic order.
\end{definition}

\begin{conjecture}[Ben Efraim]
\label{conj:benefraim}
For any \(\mathcal{A} \subset S_n\), \(|\partial \mathcal{A}| \geq |\partial \mathcal{C}|\), where \(\mathcal{C}\) denotes the initial segment of the lexicographic order on \(S_n\) of size \(|\mathcal{A}|\). 
\end{conjecture}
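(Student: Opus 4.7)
The natural attack on Conjecture \ref{conj:benefraim} is an induction on $n$ combined with a compression argument. The base cases $n \leq 2$ are trivial. For the induction step, I would partition $S_n$ into the $n$ point-stabiliser cosets $T_{1j} = \{\sigma \in S_n : \sigma(1) = j\}$. Identifying each $T_{1j}$ with $S_{n-1}$ via the restriction $\sigma \mapsto \sigma|_{\{2,\ldots,n\}}$, the transposition graph $T_n$ decomposes into $n$ vertex-disjoint copies of the transposition graph on $n-1$ symbols (coming from transpositions $(a\ b)$ with $a,b \neq 1$), together with the ``cross-edges'' induced by the transpositions $(1\ k)$, $k \in \{2,\ldots,n\}$.

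Given $\mathcal{A} \subset S_n$, set $\mathcal{A}_j = \mathcal{A} \cap T_{1j}$. The plan has two steps. First, apply the induction hypothesis inside each $T_{1j}$: replace $\mathcal{A}_j$ by the lex-initial segment of $T_{1j}$ of the same size. One checks that the lex order on $S_n$ restricts, under the identification $T_{1j} \cong S_{n-1}$, to the standard lex order on $S_{n-1}$ (after relabelling the image set), so this step does not increase the within-coset boundary. Second, prove an exchange lemma: if the sequence $(|\mathcal{A}_j|)_{j=1}^n$ is not weakly decreasing in $j$, then there is a rearrangement that shifts mass from a later coset to an earlier one without increasing the number of cross-edges. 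Combined, these two steps would transform any $\mathcal{A}$ into the lex-initial segment of $S_n$ of size $|\mathcal{A}|$ while only decreasing $|\partial \mathcal{A}|$.

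The main obstacle is the exchange lemma. A cross-edge between $T_{1j}$ and $T_{1j'}$ coming from the transposition $(1\ k)$ is a pair $\{\sigma, \sigma\cdot(1\ k)\}$ for which $\sigma(1)=j$ and $\sigma(k)=j'$; thus the cross-edges between two 1-cosets are not a single bipartite matching between $\mathcal{A}_j$ and $\mathcal{A}_{j'}$, but a family of $n-2$ matchings (one per choice of $k$) that interact nontrivially with the images of all other positions. Controlling these simultaneously while also maintaining within-coset lex-initial form appears genuinely delicate, and this is the step I expect to be the essential difficulty; indeed, I do not know how to carry it out cleanly.

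Failing a complete argument, one can at least aim for an asymptotic version using the present machinery. Theorem \ref{thm:roughstability} already implies that for sizes $c(n-1)!$ with $c$ fixed and $n$ large, any near-minimiser of $|\partial \mathcal{A}|$ is $O(c\delta)(n-1)! + O(c^2)(n-2)!$-close to a union of $c$ 1-cosets; a direct computation shows that the lex-initial segment of size $c(n-1)!$ is itself a disjoint union of $c$ 1-cosets (for $c \leq n$), so this identifies lex-initial segments as near-extremisers up to lower-order terms. Interpolating between integer multiples of $(n-1)!$, by peeling off one 1-coset at a time and recursing inside the last, partially filled coset, is a plausible route to an asymptotic form of Conjecture \ref{conj:benefraim}, and is probably the most tractable partial result achievable by the methods of this paper.
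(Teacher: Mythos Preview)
This statement is a \emph{conjecture} in the paper, not a theorem: the paper does not prove it, and indeed the concluding section explicitly says ``Ben Efraim's conjecture (Conjecture~\ref{conj:benefraim}) remains one of the most natural open problems in the area.'' The only cases the paper records as known are $|\mathcal{A}| = c(n-1)!$ for $c \in \{1,\ldots,n\}$, which follow from Diaconis--Shahshahani (Theorem~\ref{thm:diaconis} plus Remark~\ref{remark:extremal}). So there is no ``paper's own proof'' to compare your proposal against.

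Your write-up is honest about this: you sketch a compression/induction strategy, locate the genuine obstruction (controlling the $n-2$ cross-matchings between $T_{1j}$ and $T_{1j'}$ simultaneously while keeping within-coset lex-initial form), and say you do not know how to carry it out. That is a fair assessment; this kind of exchange lemma is exactly where compression arguments for non-Abelian Cayley graphs tend to break down, and no one has made it work here. Your closing remarks about what Theorem~\ref{thm:roughstability} buys asymptotically are accurate and in line with the paper's own framing.

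In short: there is no gap to flag in the sense of a wrong step, because you do not claim a proof. But you should be aware that the paper does not claim one either, so the right framing for your write-up is ``proposed approach to an open problem,'' not ``proof proposal.''
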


This is a beautiful conjecture; it may be compared to the edge-isoperimetric inequality in \(\{0,1\}^n\), due to Harper \cite{harper}, Lindsey \cite{lindsey}, Bernstein \cite{bernstein} and Hart \cite{hart}, stating that among all subsets of \(\{0,1\}^n\) of size \(k\), the first \(k\) elements of the binary ordering on \(\{0,1\}^n\) has the smallest edge boundary. (Recall that if \(x,y \in \{0,1\}^n\), we say that \(x < y\) {\em in the binary ordering} if \(x_j = 0\) and \(y_j = 1\), where \(j = \min\{i \in [n]:\ x_i \neq y_i\}\).)

To date, Conjecture \ref{conj:benefraim} is only known to hold for sets of size $c(n-1)!$ where $c \in \{1,2,\ldots,n\}$; this is a consequence of the work of Diaconis and Shahshahani \cite{diaconis}. (The authors have also verified it for sets of size $(n-t)!$, where $n$ is large depending on $t$; this will appear in a subsequent work, \cite{EFF3}.) Diaconis and Shahshahani proved the following isoperimetric inequality.

\begin{theorem}[Diaconis, Shahshahani]
\label{thm:diaconis}
If \(\mathcal{A} \subset S_n\), then
\begin{equation}\label{eq:diaconis}|\partial \mathcal{A}| \geq \frac{|\mathcal{A}|(n!-|\mathcal{A}|)}{(n-1)!}.\end{equation}
\end{theorem}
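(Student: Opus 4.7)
The plan is to deduce the inequality from a spectral isoperimetric bound applied to the transposition graph $T_n$, which is a normal Cayley graph (the set of transpositions is a single conjugacy class of $S_n$). This puts us in a position to invoke Theorem \ref{thm:normalcayley}: the eigenspaces of the adjacency matrix of $T_n$ coincide with the isotypic subspaces $U_\rho$, and for each irreducible representation $\rho$ of $S_n$ the corresponding eigenvalue is
\[
\lambda_\rho = \frac{1}{\dim(\rho)}\sum_{\tau \text{ transposition}} \chi_\rho(\tau) = \binom{n}{2}\cdot\frac{\chi_\rho(\tau_0)}{\dim(\rho)},
\]
where $\tau_0$ is any fixed transposition (since $\chi_\rho$ is a class function). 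The trivial representation $(n)$ contributes the top eigenvalue $\lambda_{(n)} = \binom{n}{2}$ on the space of constant functions.

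The key analytic input I would then use is the standard spectral edge-isoperimetric bound: for a $d$-regular graph $G$ on $N$ vertices whose second-largest adjacency eigenvalue is $\lambda_2$, every $S \subset V(G)$ satisfies
\[
|\partial S| \geq (d-\lambda_2)\,\frac{|S|\,(N-|S|)}{N}.
\]
This follows by writing $\mathbf{1}_S = \alpha\boldsymbol{1} + f$ with $f \perp \boldsymbol{1}$ and bounding $\langle A\mathbf{1}_S,\mathbf{1}_S\rangle = d|S|-|\partial S|$ using $\langle Af,f\rangle \leq \lambda_2\|f\|^2$ and $\|f\|^2 = |S|(N-|S|)/N$. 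Applied to $T_n$ with $d = \binom{n}{2}$ and $N = n!$, this reduces the theorem to showing that the largest eigenvalue of $T_n$ on the orthogonal complement of the constants is
\[
\lambda_2 \;=\; \binom{n}{2}\cdot\frac{n-3}{n-1},
\]
so that $d-\lambda_2 = \binom{n}{2}\cdot\frac{2}{n-1} = n$, which instantly yields the right-hand side $|\mathcal{A}|(n!-|\mathcal{A}|)/(n-1)!$.

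The one nontrivial step is thus the identification of $\lambda_2$, i.e.\ showing that among all irreducible representations $\rho\neq(n)$ the ratio $\chi_\rho(\tau_0)/\dim(\rho)$ is maximised by $\rho = (n-1,1)$. I would do this via the Frobenius formula
\[
\frac{\chi_\lambda(\tau_0)}{\dim(\lambda)} = \frac{1}{\binom{n}{2}} \sum_{(i,j)\in\lambda} (j-i),
\]
expressing the ratio in terms of the sum of contents of the Young diagram of $\lambda$. A direct check gives content sum $\binom{n-1}{2}-1 = n(n-3)/2$ for $(n-1,1)$, which produces the claimed value of $\lambda_{(n-1,1)}$. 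To conclude that this is indeed the second-largest eigenvalue it suffices to verify that the content sum of any partition $\lambda \vdash n$ with $\lambda\neq (n)$ and $\lambda \neq (n-1,1)$ is strictly less than $\binom{n-1}{2}-1$; this is a short combinatorial argument by comparing $\lambda$ with its two largest rows, using the identity $\sum_{(i,j)\in\lambda}(j-i) = \sum_i \binom{\lambda_i}{2} - \sum_j \binom{\lambda'_j}{2}$ and observing that moving any cell up and to the right strictly increases the content sum. I expect this verification of the eigenvalue ordering, already carried out by Diaconis and Shahshahani, to be the only delicate point; the rest of the argument is assembly of standard ingredients.
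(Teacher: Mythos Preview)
Your proposal is correct and follows essentially the same route as the paper: apply the spectral edge-isoperimetric inequality (Theorem~\ref{thm:alon}, stated there in Laplacian form $\mu_2 = d-\lambda_2$) to the normal Cayley graph $T_n$, and compute the second eigenvalue via the Frobenius character formula, obtaining $d-\lambda_2 = n$. The only cosmetic differences are that the paper works with Laplacian eigenvalues rather than adjacency eigenvalues, and that it identifies $(n-1,1)$ as the maximiser via the dominance-order monotonicity of the eigenvalues (citing Diaconis--Shahshahani) rather than your direct content-sum comparison; these are equivalent arguments.
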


\begin{remark}
\label{remark:extremal}
Equality holds if and only if \(\mathcal{A}\) is a disjoint union of 1-cosets of \(S_n\) (a dictatorship).
\end{remark}

Our aim in this section is to obtain a description of subsets of $S_n$ of size $c(n-1)!$, whose edge-boundary is {\em close} to the minimum possible size (\ref{eq:diaconis}), when $c$ is small. We will prove the following.

\begin{theorem3}
For each \(c \in \mathbb{N}\), there exists \(n_0(c) \in \mathbb{N}\) and \(\delta_0(c)>0\) such that the following holds. Let \(\mathcal{A} \subset S_n\) with \(|\mathcal{A}| = c(n-1)!\), and with
\[|\partial A| \leq \frac{|\mathcal{A}|(n!-|\mathcal{A}|)}{(n-1)!} + \delta n |\mathcal{A}|,\]
where \(n \geq n_0(c)\) and \(\delta \leq \delta_0(c)\). Then there exists a family \(\mathcal{B} \subset S_n\) such that \(\mathcal{B}\) is a union of \(c\) 1-cosets of \(S_n\), and
\[|\mathcal{A} \setminus \mathcal{B}| \leq O(c\delta)(n-1)! + O(c^2)(n-2)!.\]
(We may take \(\delta_0(c) = \Omega(c^{-4})\) and \(n_0(c) = O(c^2)\).)
\end{theorem3} 

\begin{remark}
Theorem \ref{thm:roughstability} is sharp up to an absolute constant factor when \(\delta = \Omega(c/n)\); this can be seen by considering the set
\begin{align*} \mathcal{A} & = T_{1,1} \cup T_{1,2} \cup \ldots \cup T_{1,c} \cup (T_{1,c+1} \cap (T_{2,n} \cup T_{2,n-1} \cup \ldots \cup T_{2,n-k+1})) \\
& \setminus (T_{1,c} \cap (T_{2,n} \cup T_{2,n-1} \cup \ldots \cup T_{2,n-k+1})),\end{align*}
where \(n/2 \geq k = \Omega(c^2)\).
\end{remark}

Note that Theorem \ref{thm:roughstability} is not a `genuine' stability result; as with Theorem \ref{thm:main}, we may call it a `quasi-stability' result. A `genuine' stability result would say that if $\mathcal{A}$ satisfies the hypothesis of Theorem \ref{thm:roughstability}, i.e. if it has edge-boundary close to the minimum possible size, then $\mathcal{A}$ is close to an extremal family --- and, by Remark \ref{remark:extremal}, the extremal families are precisely the dictatorships, i.e. disjoint unions of 1-cosets. However, such a statement is false when $c=2$. To see this, let
\[\mathcal{A}=T_{11} \cup T_{22} \cup (T_{12} \cap T_{21});\]
then \(|\mathcal{A}|=2(n-1)!\) and
\[|\partial \mathcal{A}| = 2n(n-2)(n-2)! = (1+1/n) \frac{|\mathcal{A}|(n!-|\mathcal{A}|)}{(n-1)!} =  \frac{|\mathcal{A}|(n!-|\mathcal{A}|)}{(n-1)!} + \delta n |\mathcal{A}|,\]
where $\delta = (n-2)/n^2$, so $\mathcal{A}$ has edge-boundary very close to the minimum possible size. However, we have
\[|\mathcal{A} \Delta \mathcal{C}| \geq (n-1)! - (n-2)! = (\tfrac{1}{2}-\tfrac{1}{2(n-1)})|\mathcal{A}|\]
whenever \(\mathcal{C}\) is a dictatorship, i.e. \(\mathcal{A}\) is far (in symmetric difference) from any {\em disjoint} union of cosets. On the other hand, \(\mathcal{A}\) is close to the union of (non-disjoint) cosets \(T_{11} \cup T_{22}\); indeed,
\[|\mathcal{A} \setminus (T_{11} \cup T_{22})| = (n-2)! = \tfrac{2}{n-1}|\mathcal{A}|.\]
This is consistent with Theorem \ref{thm:roughstability}.

To prove Theorem \ref{thm:roughstability}, we will first use an eigenvalue stability argument to show that if $\mathcal{A}$ satisfies the hypothesis of the theorem (i.e. its edge-boundary has size close to the minimum possible size), then its characteristic vector $\mathbf{1}_{\mathcal{A}}$ must be close in Euclidean distance to the subspace $U_1$. We will then use Theorem \ref{thm:main} to deduce that $\mathcal{A}$ must be somewhat close (in symmetric difference) to a family $\mathcal{B} \subset S_n$ of the same size, which is a union of 1-cosets. Finally, we will use a combinatorial stability argument to deduce that $\mathcal{A}$ must be very close to $\mathcal{B}$, completing the proof of the theorem.

We proceed to give the necessary background for our initial eigenvalue stability argument. Along the way, we will show how to prove Theorem \ref{thm:diaconis}, essentially reproducing the original proof of Diaconis and Shahshahani).

Recall that if \(G = (V,E)\) is a graph, the {\em adjacency matrix} of \(G\) is the \(|V| \times |V|\) matrix \(A\) with rows and columns indexed by \(V\), and with
\[A_{u,v} = \left\{\begin{array}{cc} 1 & \textrm{if }uv \in E(G)\\
0 & \textrm{if } uv \notin E(G).\end{array}\right.\]
The {\em Laplacian matrix} \(L\) of \(G\) may be defined by
\[L = D-A,\]
where \(D\) is the diagonal \(|V| \times |V|\) matrix with rows and columns indexed by \(V\), and with 
\[D_{u,v} = \left\{\begin{array}{cc} \deg(v) & \textrm{if }u=v\\
0 & \textrm{if } u \neq v.\end{array}\right.\]

The following theorem supplies an edge-isoperimetric inequality for a graph \(G\) in terms of the eigenvalues of its Laplacian matrix.

\begin{theorem}[Dodziuk \cite{dodziuk}, Alon-Milman \cite{alonmilman}]
\label{thm:alon}
If \(G = (V,E)\) is any graph, \(L\) is the Laplacian matrix of \(G\), and \(0=\mu_1 \leq \mu_2 \leq \ldots \leq \mu_{|V|}\) are the eigenvalues of \(L\) (repeated with their multiplicities), then for any set \(S \subset V(G)\),
\[e(S,S^c) \geq \mu_2\frac{|S||S^c|}{|V|}.\]
If equality holds, then the characteristic vector \({\bf{1}}_{S}\) of \(S\) satisfies
\[{\bf{1}}_{S}- \frac{|S|}{|G|}\mathbf{f} \in \ker(L - \mu_2 I),\]
where \(\mathbf{f}\) denotes the all-1's vector.
\end{theorem}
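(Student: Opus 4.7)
The plan is to reduce the isoperimetric inequality to the standard variational characterisation of the second Laplacian eigenvalue. First I would record the two basic identities that make the Laplacian useful here: for every vector $y\in\mathbb{R}^V$ we have the Dirichlet form
\[y^T L y = \sum_{uv\in E}(y_u-y_v)^2,\]
and $L$ is symmetric positive semidefinite with $L\mathbf{f}=0$ (each row sums to zero), so $\mathbf{f}$ spans the $\mu_1=0$ eigenspace. The spectral theorem then gives the Rayleigh bound $x^T L x \geq \mu_2\,\|x\|_2^2$ for every $x\perp\mathbf{f}$, with equality if and only if $x\in\ker(L-\mu_2 I)$.

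The natural test vector is the centred characteristic function $x := {\bf{1}}_S - \tfrac{|S|}{|V|}\mathbf{f}$, which is manifestly orthogonal to $\mathbf{f}$. A direct expansion gives
\[\|x\|_2^2 = |S|\Bigl(1-\tfrac{|S|}{|V|}\Bigr)^{\!2} + |S^c|\Bigl(\tfrac{|S|}{|V|}\Bigr)^{\!2} = \frac{|S|\,|S^c|}{|V|}.\]
On the other hand, since $L\mathbf{f}=0$ we have $x^T L x = {\bf{1}}_S^T L\, {\bf{1}}_S$, and the Dirichlet form identity shows that each edge with exactly one endpoint in $S$ contributes $1$ to the sum while every other edge contributes $0$, giving ${\bf{1}}_S^T L\, {\bf{1}}_S = e(S,S^c)$. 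Plugging these into the Rayleigh bound produces $e(S,S^c)\geq \mu_2\,|S|\,|S^c|/|V|$, as required.

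For the equality statement, equality in the Rayleigh bound forces $x$ to lie in $\ker(L-\mu_2 I)$, which is exactly the conclusion claimed. In short, there is no real obstacle: the whole argument is one orthogonality check, one Dirichlet-form identity, and one appeal to the min-max principle. The only mild subtlety is noting that replacing ${\bf{1}}_S$ by its centred version does not change $x^T L x$ (because the shift is in $\ker L$), which is what lets the combinatorial quantity $e(S,S^c)$ match up cleanly with the spectral bound.
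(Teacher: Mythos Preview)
Your argument is correct and is exactly the standard proof; the paper does not actually prove Theorem~\ref{thm:alon} (it is cited as a known result of Dodziuk and Alon--Milman), but your approach matches the one the paper itself uses when proving the closely related stability version, Lemma~\ref{lemma:alonstability}.
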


We will show below (Lemma \ref{lemma:evalues}) how to calculate the value of $\mu_2$ for the transposition graph; plugging this value into Theorem \ref{thm:alon} will yield a proof of Theorem \ref{thm:diaconis}.

To investigate the structure of subsets with small edge-boundary in the transposition graph, we will need the following `stability version' of Theorem \ref{thm:alon}.

\begin{lemma}
\label{lemma:alonstability}
Let \(G = (V,E)\) be a graph, let \(L\) be the Laplacian matrix of \(G\), and let \(0=\mu_1 \leq \mu_2 \leq \ldots \leq \mu_{|V|}\) be the eigenvalues of \(L\) (repeated with their multiplicities). Let \(S \subset V(G)\) with
\[e(S,S^c) \leq \mu_2\frac{|S||S^c|}{|V|}+\gamma |S|,\]
where \(\gamma \geq 0\). Equip \(\mathbb{R}^V\) with the inner product
\[\langle f,g \rangle = \frac{1}{|V|}\sum_{v \in V} f(v)g(v),\]
and let
\[||f||_2 = \sqrt{\frac{1}{|V|} \sum_{v \in V} f(v)^2}\]
denote the induced Euclidean norm. Let \(M = \min\{i:\ \mu_i > \mu_2\}\). Let \(U\) denote the direct sum of the \(\mu_1\) and \(\mu_2\) eigenspaces of \(L\), and let \(P_{U}\) denote orthogonal projection onto \(U\). Then we have   
\[||{\bf{1}}_S - P_{U}({\bf{1}}_S)||_2^2 \leq \frac{\gamma}{\mu_M - \mu_{2}} \frac{|S|}{|V|}.\]
\end{lemma}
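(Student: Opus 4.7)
The plan is the standard spectral decomposition argument, now used in reverse: instead of bounding $e(S,S^c)$ from below, we bound the ``high-frequency'' mass of $\mathbf{1}_S$ from above in terms of the slack $\gamma|S|$ in the isoperimetric hypothesis.

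First I would record the Dirichlet identity for the Laplacian in the probability-normalized inner product, namely
\[
\langle L\mathbf{1}_S,\mathbf{1}_S\rangle \;=\; \frac{1}{|V|}\sum_{uv\in E}(\mathbf{1}_S(u)-\mathbf{1}_S(v))^2 \;=\; \frac{e(S,S^c)}{|V|}.
\]
Choose an orthonormal eigenbasis $v_1,v_2,\ldots,v_{|V|}$ of $L$ with $Lv_i=\mu_i v_i$, taking $v_1=\mathbf{f}$ (the normalized constant vector, which is unit in this inner product), and expand $\mathbf{1}_S=\sum_i c_i v_i$. Parseval gives $\sum_i c_i^2=\|\mathbf{1}_S\|_2^2=|S|/|V|$, and the coefficient $c_1=\langle\mathbf{1}_S,\mathbf{f}\rangle=|S|/|V|$ is forced, so
\[
\sum_{i\ge 2}c_i^2 \;=\; \frac{|S|}{|V|}-\Bigl(\frac{|S|}{|V|}\Bigr)^{\!2} \;=\; \frac{|S||S^c|}{|V|^2}.
\]

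Next I would split the quadratic form $\langle L\mathbf{1}_S,\mathbf{1}_S\rangle=\sum_{i\ge 2}\mu_i c_i^2$ according to the eigenvalue being exactly $\mu_2$ or at least $\mu_M$. Writing $A=\sum_{i:\mu_i=\mu_2}c_i^2$ and $B=\sum_{i:\mu_i\ge\mu_M}c_i^2$, we have $A+B=|S||S^c|/|V|^2$, and bounding the $\mu_i\ge\mu_M$ terms below by $\mu_M$ yields
\[
\frac{e(S,S^c)}{|V|} \;=\; \sum_{i\ge 2}\mu_i c_i^2 \;\ge\; \mu_2 A+\mu_M B \;=\; \mu_2\frac{|S||S^c|}{|V|^2}+(\mu_M-\mu_2)B.
\]
Combining this with the hypothesis $e(S,S^c)\le \mu_2|S||S^c|/|V|+\gamma|S|$, the $\mu_2|S||S^c|/|V|^2$ terms cancel, leaving
\[
(\mu_M-\mu_2)B \;\le\; \frac{\gamma|S|}{|V|}.
\]
Finally I would observe that $B$ is exactly $\|\mathbf{1}_S-P_U(\mathbf{1}_S)\|_2^2$, since $U$ is spanned by the $v_i$'s with $\mu_i\le\mu_2$, and dividing by $\mu_M-\mu_2>0$ gives the claimed bound.

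There is no real obstacle here: the only things to be careful about are the normalization of the inner product (so that $\langle L f,f\rangle=\frac{1}{|V|}\sum_{uv\in E}(f(u)-f(v))^2$ rather than the unnormalized version) and the correct identification of the $\mu_1$-coefficient $c_1=|S|/|V|$. Everything else is a one-line spectral split.
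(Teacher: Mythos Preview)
Your proof is correct and is essentially the same as the paper's: both expand $\mathbf{1}_S$ in an orthonormal Laplacian eigenbasis, use the Dirichlet identity $\langle L\mathbf{1}_S,\mathbf{1}_S\rangle=e(S,S^c)/|V|$, split the quadratic form at the eigenvalue threshold $\mu_M$, and cancel the $\mu_2|S||S^c|/|V|^2$ term against the hypothesis to isolate the high-frequency mass. The only difference is cosmetic (you name the low- and high-frequency masses $A$ and $B$, whereas the paper works directly with $\phi=B$).
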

\begin{proof}
Recall that for any vector \(x \in \mathbb{R}^{V}\), we have
\[\langle x, Lx \rangle = \frac{1}{|V|}\sum_{ij \in E(G)} (x_i-x_j)^2.\]
Hence, in particular,
\[\langle {\bf{1}}_S,L{\bf{1}}_{S} \rangle = \frac{e(S,S^c)}{|V|}.\]
Let \(u_1,u_2,\ldots,u_{|V|}\) denote an orthonormal basis of eigenvectors of \(L\) corresponding to the eigenvalues \(\mu_1,\mu_2,\ldots,\mu_{|V|}\), where \(u_1 = \mathbf{f}\) is the all-1's vector. Write \({\bf{1}}_S\) as a linear combination of these basis vectors:
\[{\bf{1}}_S = \sum_{i=1}^{|V|} \xi_i u_i.\]
Let \(\alpha = |S|/|V|\) denote the measure of \(S\). Observe that
\[\xi_1 = \alpha,\quad \sum_{i=1}^{|V|} \xi_i^2 = \alpha,\quad ||{\bf{1}}_S - P_{U}({\bf{1}}_S)||_2^2 = \sum_{i \geq M} \xi_i^2.\]
Write
\[\phi = ||{\bf{1}}_S - P_{U}({\bf{1}}_S)||_2^2.\]
We have
\[\frac{e(S,S^c)}{|V|} = \langle {\bf{1}}_S,L{\bf{1}}_{S} \rangle = \sum_{i=1}^{|V|} \mu_i \xi_i^2 \geq \mu_2 (\alpha - \alpha^2 - \phi) + \mu_M \phi = \mu_2 \alpha(1-\alpha) + \phi(\mu_M-\mu_2).\]
Hence, if \(e(S,S^c) \leq \mu_2\frac{|S||S^c|}{|V|}+\gamma |S|\), then
\[\mu_2 \alpha(1-\alpha) + \phi(\mu_M-\mu_2) \leq \frac{e(S,S^c)}{|V|} \leq \mu_2 \alpha (1-\alpha) +\gamma \alpha.\]
Rearranging yields
\[\phi \leq \frac{\gamma}{\mu_M - \mu_2}\alpha,\]
as required.
\end{proof}

We now proceed to calculate \(\mu_2\) and \(\mu_M\) for the transposition graph.

\begin{lemma}
\label{lemma:evalues}
The transposition graph on $S_n$ has \(\mu_2 = n\) (for all $n \geq 2$) and $\mu_M=2n-2$ (for all $n \geq 4$). The \(0\)-eigenspace of its Laplacian is \(U_{(n)}\), and provided $n \geq 4$, the \(n\)-eigenspace is \(U_{(n-1,1)}\).
\end{lemma}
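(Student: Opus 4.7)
The plan is to exploit the fact that the transposition graph $T_n$ is a normal Cayley graph: its generating set, being the set of all transpositions, forms a single conjugacy class. Since $T_n$ is $\binom{n}{2}$-regular, the Laplacian satisfies $L = \binom{n}{2} I - A$, so $L$ and $A$ share eigenspaces, with the Laplacian eigenvalue on each eigenspace equal to $\binom{n}{2}$ minus the adjacency eigenvalue. I would first invoke Theorem~\ref{thm:normalcayley}: each subspace $U_\alpha$ (for $\alpha \vdash n$) is an eigenspace of $A$, with eigenvalue $\binom{n}{2}\chi_\alpha(\tau)/\dim\alpha$ for any fixed transposition $\tau$.

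Next I would invoke the classical identity --- derivable either from Frobenius's character formula or, more cleanly, from the theory of Jucys--Murphy elements --- that this eigenvalue equals the sum of contents of the Young diagram of $\alpha$, namely
\[
\mathrm{ct}(\alpha) := \sum_{(i,j) \in \alpha} (j - i).
\]
A direct computation then yields the three relevant eigenvalues: for $\alpha = (n)$ the content sum is $\binom{n}{2}$, giving $\mu_\alpha = 0$ and $U_{(n)}$ equal to the line of constants; for $\alpha = (n-1,1)$ it is $\binom{n-1}{2} - 1 = n(n-3)/2$, giving $\mu_\alpha = n$; and for $\alpha = (n-2,2)$ it is $\binom{n-2}{2} - 1$, giving $\mu_\alpha = 2(n-1) = 2n-2$. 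The identifications of $U_{(n)}$ as the $0$-eigenspace and $U_{(n-1,1)}$ as the $n$-eigenspace then fall out as soon as one checks that no other $\alpha$ yields $\mu_\alpha \leq 2n-2$.

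To do exactly this, I would observe that every $\alpha \vdash n$ with $\alpha_1 \leq n-2$ automatically satisfies $\alpha \domleq (n-2,2)$, since $\alpha_1 + \alpha_2 \leq n$ for any partition. The key lemma is then that the content sum is strictly monotonic in the dominance order: moving a single cell from the end of row $j$ to the end of row $i < j$ changes the content sum by $(\alpha_i + 1 - i) - (\alpha_j - j) = (\alpha_i - \alpha_j) + (j-i) + 1 > 0$, and such elementary raising moves generate the dominance order. Therefore $\mu_\alpha > 2n-2$ for every $\alpha \neq (n), (n-1,1), (n-2,2)$, which finishes the proof, the hypothesis $n \geq 4$ entering only to guarantee that $(n-2,2)$ is a genuine partition. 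The main technical input will really be the two ingredients above --- the content formula for the adjacency eigenvalues, and the strict dominance-monotonicity of $\mathrm{ct}(\alpha)$ --- both of which are standard facts in the representation theory of $S_n$ that I would either cite or sketch in a short appendix; everything else is just bookkeeping.
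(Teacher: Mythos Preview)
Your proposal is correct and follows essentially the same route as the paper: invoke Theorem~\ref{thm:normalcayley} for the normal Cayley graph $T_n$, compute the adjacency eigenvalue on $U_\alpha$ (the paper uses Frobenius's formula, which is identically equal to your content sum $\mathrm{ct}(\alpha)$), and then use monotonicity of this quantity in the dominance order together with $(n-1,1)\domgeq\alpha$ and $(n-2,2)\domgeq\alpha$ for the relevant $\alpha$. The only cosmetic difference is that you sketch a self-contained proof of the dominance monotonicity via elementary box-raising moves, whereas the paper simply cites Diaconis--Shahshahani for it.
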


\begin{proof}
If \(G = (V,E)\) is a \(d\)-regular graph, then its Laplacian matrix is given by \(L = dI-A\). Therefore, if the eigenvalues of its adjacency matrix are
\[d = \lambda_1 \geq \lambda_2 \geq \ldots \geq \lambda_{|V|},\]
then \(\mu_i = d-\lambda_i\) for each \(i\). The transposition graph on $S_n$ is ${n \choose 2}$-regular, so it has $\mu_i = {n \choose 2}-\lambda_i$ for each \(i\). 

Note that the transposition graph is a normal Cayley graph, and therefore Theorem \ref{thm:normalcayley} applies to its adjacency matrix. Recall from section \ref{sec:backgroundS_n} that there is an explicit 1-1 correspondence between isomorphism classes of irreducible representations of \(S_n\) and partitions of \(n\); given a partition \(\alpha\), we write \(\chi_{\alpha}\) for the character of the corresponding irreducible representation of \(S_n\).

Frobenius gave the following formula for the value of \(\chi_{\alpha}\) at a transposition.
\[\chi_{\alpha}((1 \ 2)) = \frac{\dim(\rho_{\alpha})}{{\binom{n}{2}}} \tfrac{1}{2} \sum_{j=1}^{l}((\alpha_j-j)(\alpha_j-j+1) - j(j-1))\quad (\alpha = (\alpha_1,\ldots,\alpha_l) \vdash n).\]
Combining this with (\ref{eq:normalcayley}) on page \pageref{eq:normalcayley} yields the following formula for the eigenvalues of the adjacency matrix of the transposition graph:
\[\lambda_{\alpha} = \tfrac{1}{2} \sum_{j=1}^{l}((\alpha_j-j)(\alpha_j-j+1) - j(j-1))\quad (\alpha \vdash n).\]

Note that \(\lambda_{(n)} = \binom{n}{2}\) and \(\lambda_{(n-1,1)} = \binom{n}{2}-n\). Diaconis and Shashahani \cite{diaconis} verify that if \(\alpha\) and \(\alpha'\) are two partitions of \(n\) with \(\alpha \domgeq \alpha'\), then \(\lambda_{\alpha} \geq \lambda_{\alpha'}\). Since \((n-1,1) \domgeq \alpha\) for all \(\alpha \neq (n)\), we have \(\lambda_{\alpha} \leq \lambda_{(n-1,1)}\) for all \(\alpha \neq (n)\), and therefore \(\lambda_2 = \binom{n}{2}-n\). Hence, the 0-eigenspace of the Laplacian is precisely \(U_{(n)}\), the space of constant functions, and we have \(\mu_2=n\).

Note also that \(\lambda_{(n-2,2)} = \binom{n}{2}-2n+2\). Since \((n-2,2) \domgeq \alpha\) for all \(\alpha \neq (n),(n-1,1)\), we have \(\lambda_{\alpha} \leq \binom{n}{2}- 2n+2\) for all \(\alpha \neq (n),(n-1,1)\). Provided $n \geq 4$, we have
$${n \choose 2} - \lambda_{(n-1,1)} = n < 2n-2 = {n \choose 2} - \lambda_{(n-2,2)} \leq {n \choose 2}-\lambda_\alpha\quad \forall \alpha \neq (n),(n-1,1),$$
and therefore $\mu_M = 2n-2$, and the \(n\)-eigenspace of \(L\) is precisely \(U_{(n-1,1)}\).
\end{proof}

Theorem \ref{thm:diaconis} follows immediately by plugging \(\mu_2 = n\) into Theorem \ref{thm:alon}. We can also use the equality part of Theorem \ref{thm:alon} to deduce Remark \ref{remark:extremal}.

\begin{corollary}
Equality holds in Theorem \ref{thm:diaconis} only if $\mathcal{A}$ is a disjoint union of 1-cosets of $S_n$ (a dictatorship).
\end{corollary}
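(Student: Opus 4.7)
The plan is to invoke the equality case of the Dodziuk/Alon--Milman bound (Theorem \ref{thm:alon}) together with the explicit identification of the small eigenspaces of the Laplacian of the transposition graph from Lemma \ref{lemma:evalues}, and then close the argument with the Boolean characterization Theorem \ref{thm:characterization}.

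More concretely, Theorem \ref{thm:diaconis} was obtained by applying Theorem \ref{thm:alon} to the transposition graph $T_n$ using $\mu_2 = n$, which is exactly the content of Lemma \ref{lemma:evalues}. The equality clause of Theorem \ref{thm:alon} tells us that if $\mathcal{A} \subset S_n$ achieves equality in \eqref{eq:diaconis}, then
\[
{\bf{1}}_{\mathcal{A}} - \frac{|\mathcal{A}|}{n!}\mathbf{f} \in \ker(L - n I),
\]
i.e.\ the centred characteristic vector lies in the $\mu_2 = n$ eigenspace of the Laplacian of $T_n$. By Lemma \ref{lemma:evalues}, this eigenspace is precisely $U_{(n-1,1)}$.

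Since $\frac{|\mathcal{A}|}{n!}\mathbf{f}$ is a constant function, it lies in $U_{(n)}$. Adding, we conclude
\[
{\bf{1}}_{\mathcal{A}} \in U_{(n)} \oplus U_{(n-1,1)} = U_1.
\]
Theorem \ref{thm:characterization} then yields immediately that $\mathcal{A}$ is a disjoint union of $1$-cosets of $S_n$, which is the desired conclusion.

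The only mild technical point is that Lemma \ref{lemma:evalues} as stated identifies the $n$-eigenspace with $U_{(n-1,1)}$ only for $n \geq 4$; for $n \in \{2,3\}$ one either verifies the same identification directly from the short list of partitions of $n$ (using the Frobenius formula already recorded in the proof of Lemma \ref{lemma:evalues}), or checks the corollary by hand, since in those cases every subset of $S_n$ is trivially of the prescribed form or has strictly positive boundary defect. There is no real obstacle here; the argument is essentially a one-line consequence of results already in hand.
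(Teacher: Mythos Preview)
Your proof is correct and follows essentially the same route as the paper: apply the equality case of Theorem \ref{thm:alon}, use Lemma \ref{lemma:evalues} to identify the $\mu_2$-eigenspace with $U_{(n-1,1)}$ (so that ${\bf 1}_{\mathcal{A}} \in U_1$), and conclude via Theorem \ref{thm:characterization}. The paper likewise dispatches the small cases $n \leq 3$ by direct inspection before assuming $n \geq 4$, just as you do.
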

\begin{proof}
It is easy to see that the corollary holds for all \(n \leq 3\), so we may assume that \(n \geq 4\). If equality holds in (\ref{eq:diaconis}) for \(\mathcal{A}\), then by the equality part of Theorem \ref{thm:alon}, \({\bf{1}}_{\mathcal{A}} - (|\mathcal{A}|/n!)\mathbf{f}\) lies in the \(\mu_2\)-eigenspace of \(L\), which by Lemma \ref{lemma:evalues} is precisely \(U_{(n-1,1)}\). Therefore, \({\bf{1}}_{\mathcal{A}} \in U_{(n)} \oplus U_{(n-1,1)} = U_1\). It follows from Theorem \ref{thm:characterization} that \(\mathcal{A}\) is a disjoint union of 1-cosets of \(S_n\).
\end{proof}

We now use Lemmas \ref{lemma:alonstability} and \ref{lemma:evalues} to show that a subset of $S_n$ with small edge-boundary in the transposition graph, has characteristic vector which is close to \(U_1\).

\begin{lemma}
\label{lemma:close}
Let $n \geq 4$, and let \(\mathcal{A} \subset S_n\) with \(|\mathcal{A}| = c(n-1)!\), and with
\[|\partial A| \leq \frac{|\mathcal{A}|(n!-|\mathcal{A}|)}{(n-1)!} + \delta n |\mathcal{A}|.\]
Then
\[||{\bf{1}}_\mathcal{A} - P_{U_1} ({\bf{1}}_{\mathcal{A}})||_2^2 \leq \frac{\delta n}{n-2}\frac{c}{n}.\]
\end{lemma}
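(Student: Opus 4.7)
The proof is essentially a direct combination of the stability version of the Dodziuk/Alon-Milman inequality (Lemma \ref{lemma:alonstability}) with the spectral data for the transposition graph (Lemma \ref{lemma:evalues}); the only work is matching up the constants. My plan is as follows.

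First, I would rewrite the hypothesis in the form required by Lemma \ref{lemma:alonstability}. By Lemma \ref{lemma:evalues}, the transposition graph $T_n$ has $\mu_2 = n$, so
\[
\mu_2 \frac{|\mathcal{A}||\mathcal{A}^c|}{|S_n|} \;=\; n \cdot \frac{|\mathcal{A}|(n! - |\mathcal{A}|)}{n!} \;=\; \frac{|\mathcal{A}|(n! - |\mathcal{A}|)}{(n-1)!}.
\]
Thus the hypothesis $|\partial \mathcal{A}| \leq \frac{|\mathcal{A}|(n!-|\mathcal{A}|)}{(n-1)!} + \delta n |\mathcal{A}|$ is precisely the condition $e(\mathcal{A}, \mathcal{A}^c) \leq \mu_2 \frac{|\mathcal{A}||\mathcal{A}^c|}{|S_n|} + \gamma |\mathcal{A}|$ with $\gamma = \delta n$.

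Next, I identify the subspace $U$ in Lemma \ref{lemma:alonstability}. Since $n \geq 4$, Lemma \ref{lemma:evalues} tells us that the $0$-eigenspace of the Laplacian is $U_{(n)}$ and the $n$-eigenspace is $U_{(n-1,1)}$, so their direct sum is exactly $U = U_{(n)} \oplus U_{(n-1,1)} = U_1$, and consequently $P_U = P_{U_1}$. Lemma \ref{lemma:evalues} also gives $\mu_M = 2n-2$, so $\mu_M - \mu_2 = n - 2$.

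Finally, I invoke Lemma \ref{lemma:alonstability} directly. With $|S| = |\mathcal{A}| = c(n-1)!$ and $|V| = n!$, the conclusion reads
\[
\|\mathbf{1}_\mathcal{A} - P_{U_1}(\mathbf{1}_\mathcal{A})\|_2^2 \;\leq\; \frac{\gamma}{\mu_M - \mu_2} \cdot \frac{|\mathcal{A}|}{|S_n|} \;=\; \frac{\delta n}{n - 2} \cdot \frac{c(n-1)!}{n!} \;=\; \frac{\delta n}{n - 2} \cdot \frac{c}{n},
\]
which is the claimed bound. There is no real obstacle here beyond bookkeeping: all the substantive work has already been done in Lemmas \ref{lemma:alonstability} and \ref{lemma:evalues}.
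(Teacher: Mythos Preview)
Your proof is correct and follows essentially the same approach as the paper: both apply Lemma \ref{lemma:alonstability} with $\gamma = \delta n$, using the eigenvalue data $\mu_2 = n$, $\mu_M = 2n-2$, and the identification $U = U_1$ from Lemma \ref{lemma:evalues}. Your write-up is in fact slightly more explicit about the bookkeeping (verifying that the hypothesis matches the form in Lemma \ref{lemma:alonstability} and computing $\mu_M - \mu_2 = n-2$), but there is no substantive difference.
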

\begin{proof}
By Lemma \ref{lemma:evalues}, the transposition graph has \(\mu_2 = n\) and \(\mu_{M} = 2n-2\), the \(0\)-eigenspace of its Laplacian is \(U_{(n)}\), and the \(n\)-eigenspace is \(U_{(n-1,1)}\), so the subspace $U$ in Lemma \ref{lemma:alonstability} is $U_1$. If \(\mathcal{A}\) is as in the statement of the lemma, then we may apply Lemma \ref{lemma:alonstability} with $\gamma=\delta n$, giving
\[||{\bf{1}}_\mathcal{A} - P_{U_1} ({\bf{1}}_{\mathcal{A}})||_2^2 \leq \frac{\delta n}{n-2}\frac{c}{n},\]
as required.
\end{proof}

We now combine Lemma \ref{lemma:close} and Theorem \ref{thm:main} to give the following very rough structural description of subsets of $S_n$ with small edge-boundary in the transposition graph.

\begin{proposition}
\label{prop:roughstability}
There exists $\delta_0 > 0$ such that for each $c \in \mathbb{N}$, the following holds. Let \(\mathcal{A} \subset S_n\) with \(|\mathcal{A}| = c(n-1)!\), and with
\[|\partial A| \leq \frac{|\mathcal{A}|(n!-|\mathcal{A}|)}{(n-1)!} + \delta n |\mathcal{A}|.\]
If \(\delta \leq \delta_0\), then there exists a family \(\mathcal{B} \subset S_n\) such that \(\mathcal{B}\) is a union of \(c\) 1-cosets of \(S_n\), and
\[|\mathcal{A} \triangle \mathcal{B}| \leq C_1 c^2 (\delta^{1/2} + 1/n)(n-1)!,\]
where \(C_1>0\) is an absolute constant.
\end{proposition}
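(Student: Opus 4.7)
The plan is to deduce the proposition by combining the two preceding ingredients: Lemma~\ref{lemma:close}, which provides an $L^2$ bound on the distance from $\mathbf{1}_{\mathcal{A}}$ to the subspace $U_1$, and Theorem~\ref{thm:main}, which converts such $L^2$ closeness into closeness (in symmetric difference) to a union of $1$-cosets. The strategy is essentially a one-line reduction once the parameters are aligned.

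First, I would apply Lemma~\ref{lemma:close} to the family $\mathcal{A}$. Writing $f = \mathbf{1}_{\mathcal{A}}$, this yields $\mathbb{E}[(f - f_1)^2] \leq \tfrac{\delta n}{n-2}\cdot \tfrac{c}{n}$. Setting $\epsilon := \delta n/(n-2)$, we have $\mathbb{E}[(f - f_1)^2] \leq \epsilon c/n$, and for $n \geq 4$ we have $\epsilon \leq 2\delta$. Choosing $\delta_0 := \epsilon_0/2$, where $\epsilon_0$ is the absolute constant from Theorem~\ref{thm:main}, ensures $\epsilon \leq \epsilon_0$ whenever $\delta \leq \delta_0$.

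Second, to invoke Theorem~\ref{thm:main} we also need the hypothesis $c \leq n/2$. I would handle the complementary regime $c > n/2$ separately: there $c^2/n > c/2$, so the bound $C_1 c^2 (\delta^{1/2} + 1/n)(n-1)!$ already exceeds $|\mathcal{A}| = c(n-1)!$ as soon as $C_1 \geq 2$, and the conclusion is trivial (take for $\mathcal{B}$ any union of $c$ $1$-cosets, e.g.\ $\bigcup_{j \in [c]} T_{1j}$). Assuming henceforth that $c \leq n/2$, Theorem~\ref{thm:main} supplies a Boolean function $\tilde{f} = \mathbf{1}_{\mathcal{B}}$, where $\mathcal{B}$ is a union of $\round{c}$ $1$-cosets, with $\mathbb{E}[(f - \tilde{f})^2] \leq C_0 c^2 (\epsilon^{1/2} + 1/n)/n$. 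Since $c \in \mathbb{N}$, we have $\round{c} = c$, so $\mathcal{B}$ is a union of exactly $c$ $1$-cosets, as required.

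Third, converting the $L^2$ bound into a symmetric-difference bound is immediate: $\mathbb{E}[(f - \tilde{f})^2] = |\mathcal{A} \triangle \mathcal{B}|/n!$, so $|\mathcal{A} \triangle \mathcal{B}| \leq C_0 c^2 (\epsilon^{1/2} + 1/n)(n-1)! \leq C_0 c^2 (\sqrt{2}\,\delta^{1/2} + 1/n)(n-1)!$, and taking $C_1 := \max\{\sqrt{2}\,C_0,\, 2\}$ delivers the claimed bound in both regimes. There is no substantive obstacle here: the heavy lifting was already done in Lemma~\ref{lemma:close} (an eigenvalue-stability computation invoking Lemma~\ref{lemma:alonstability} and Lemma~\ref{lemma:evalues}) and in Theorem~\ref{thm:main}; what remains is merely to align the parameters $\epsilon$ and $\delta$, to dispose of the sparse-only restriction $c \leq n/2$ by a triviality argument, and to keep track of absolute constants.
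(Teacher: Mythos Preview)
Your proposal is correct and follows essentially the same approach as the paper: apply Lemma~\ref{lemma:close} to obtain $\mathbb{E}[(f-f_1)^2] \leq \epsilon c/n$ with $\epsilon = \delta n/(n-2)$, then feed this into Theorem~\ref{thm:main} (using $\round{c}=c$ since $c\in\mathbb{N}$), and dispose of the restriction $c\le n/2$ by a triviality argument. If anything, you are slightly more explicit than the paper in isolating the hypothesis $c\le n/2$ of Theorem~\ref{thm:main} and in specifying $\delta_0=\epsilon_0/2$; the paper instead assumes $n\ge \tfrac{1}{2}C_1 c$ and uses $n\ge 3$ to bound $n/(n-2)\le 3$, arriving at $C_1=\sqrt{3}\,C_0$ rather than your $\sqrt{2}\,C_0$.
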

\begin{proof}
Suppose that \(\mathcal{A} \subset S_n\) with \(|\mathcal{A}| = c(n-1)!\) for some \(c \in \mathbb{N}\), and with
\[|\partial \mathcal{A}| \leq \frac{|\mathcal{A}|(n!-|\mathcal{A}|)}{(n-1)!} + \delta n |\mathcal{A}|.\]
Our aim is to show that \(\mathcal{A}\) must be close to a union of \(c\) 1-cosets of \(S_n\). Note that we may assume that \(n \geq \tfrac{1}{2}C_1 c\), otherwise we have \(C_1 c^2 (n-1)!/n \geq 2c(n-1)!\), so the conclusion of the proposition holds trivially whenever \(|\mathcal{A}|=|\mathcal{B}|=c(n-1)!\).

It follows from Lemma \ref{lemma:close} that
\[||{\bf{1}}_\mathcal{A} - P_{U_1} ({\bf{1}}_{\mathcal{A}})||_2^2 \leq \frac{\delta n}{n-2}\frac{c}{n},\]
i.e. \(\mathbf{1}_{\mathcal{A}}\) is close to \(U_{1}\). Let \(f = {\bf{1}}_{\mathcal{A}}\), and let \(f_1 = P_{U_1}({\bf{1}}_{\mathcal{A}})\); then we have
\[\mathbb{E}[(f-f_1)^2] = ||f-f_1||_2^2 \leq \frac{\delta n}{n-2}\frac{c}{n},\]
so \(\mathcal{A}\) satisfies the hypotheses of Theorem \ref{thm:main} with \(\epsilon = \delta \tfrac{n}{n-2}\). Therefore, by Theorem \ref{thm:main}, there exists a family \(\mathcal{B} \subset S_n\) which is a union of \(c\) 1-cosets of \(S_n\), and
\[|\mathcal{A} \triangle \mathcal{B}| \leq C_0 c^2((\delta n/(n-2))^{1/2} + 1/n)(n-1)! \leq \sqrt{3}C_0 c^2 (\delta^{1/2} + 1/n)(n-1)!,\]
using the fact that $n \geq 3$. This proves the proposition.
\end{proof}

We will now use a combinatorial stability argument to strengthen the bounds in the conclusion of Proposition \ref{prop:roughstability}, proving Theorem \ref{thm:roughstability}.

\begin{proof}[Proof of Theorem \ref{thm:roughstability}:]
Suppose that \(\mathcal{A} \subset S_n\) with \(|\mathcal{A}| = c(n-1)!\) for some \(c \in \mathbb{N}\), and with
\[|\partial A| \leq \frac{|\mathcal{A}|(n!-|\mathcal{A}|)}{(n-1)!} + \delta n |\mathcal{A}|.\]
Then by Proposition \ref{prop:roughstability}, there exists a family \(\mathcal{B} \subset S_n\) such that \(\mathcal{B}\) is a union of \(c\) 1-cosets of \(S_n\), and
\[|\mathcal{A} \setminus \mathcal{B}| = \psi(n-1)!,\]
where
$$\psi \leq C_1 c^2 (\delta^{1/2} + 1/n) < 1/3,$$
provided $\delta \leq O(c^{-4})$ and $n \geq \Omega(c^2)$. We proceed to obtain a better upper bound on \(\psi\) in terms of \(\delta\).

Let \(\mathcal{E} = \mathcal{A} \setminus \mathcal{B}\); then \(|\mathcal{E}| = \psi (n-1)!\). Write \(B = B_1 \cup B_2 \cup \ldots \cup B_c\), where the \(B_i\)'s are 1-cosets of \(S_n\), let \(\mathcal{M} = \mathcal{B} \setminus \mathcal{A}\), and let \(\mathcal{M}_i = B_i \setminus \mathcal{A}\) denote the set of permutations in \(B_i\) which are missing from \(\mathcal{A}\). Let \(\mathcal{N}_i = \mathcal{M}_i \setminus (\cup_{j \neq i} B_j)\), and write \(|\mathcal{N}_i| = \nu_i (n-1)!\).

We now give a lower bound on \(|\partial \mathcal{A}|\) in terms of \(\psi\). Observe that
\begin{align*}|\partial \mathcal{A}| &= |\partial \mathcal{B}| + |\partial \mathcal{E}|- 2e(\mathcal{E},\mathcal{B}) - e(\mathcal{M},S_n \setminus (\mathcal{B} \cup \mathcal{E})) + e(\mathcal{M}, \mathcal{B} \setminus \mathcal{M}) + e(\mathcal{E},\mathcal{M})\\
& = |\partial \mathcal{B}| + |\partial \mathcal{E}| - 2e(\mathcal{E},\mathcal{B})- e(\mathcal{M},S_n \setminus \mathcal{B}) + e(\mathcal{M}, \mathcal{B} \setminus \mathcal{M}) + 2e(\mathcal{E},\mathcal{M})\\
& \geq |\partial \mathcal{B}| + |\partial \mathcal{E}| - 2e(\mathcal{E},\mathcal{B})- e(\mathcal{M},S_n \setminus \mathcal{B}) + e(\mathcal{M}, \mathcal{B} \setminus \mathcal{M}).
\end{align*}

By definition, we have \(\mathcal{E} \cap B_i = \emptyset\) for each \(i \in [c]\). If \(B_i = T_{pq}\) and \(\sigma \in \mathcal{E}\) then \(\sigma (p) \neq q\), and so the only neighbour of \(\sigma\) in \(B_i\) is \(\sigma (p \ \sigma^{-1} (q))\). It follows that \(e(\mathcal{E},B_i) \leq |\mathcal{E}|\) for each \(i\). Summing over all \(i\), we obtain:
\[e(\mathcal{E},\mathcal{B}) \leq \sum_{i=1}^{c} e(\mathcal{E},B_i) \leq c|\mathcal{E}| = c\psi(n-1)!.\]

Similarly, each \(\sigma \in B_i\) has at most \(n-1\) neighbours in \(S_n \setminus B_i\). Indeed, if \(B_i = T_{pq}\), then the neighbours of \(\sigma\) in \(S_n \setminus B_i\) are \(\{\sigma(p\ r):\ r \neq p\}\). It follows that
\[e(\mathcal{M},S_n \setminus \mathcal{B}) \leq (n-1)|\mathcal{M}|.\]

By Theorem \ref{thm:diaconis}, we have
\[|\partial \mathcal{E}| \geq \psi (n-1)!(n-\psi).\]

Since $\mathcal{B}$ is a union of $c$ 1-cosets of $S_n$, it is easy to see that
\[ |\mathcal{B}| \geq c(n-1)! - \binom{c}{2} (n-2)!, \]
and so Theorem~\ref{thm:diaconis} implies 
\[ |\partial \mathcal{B}| \geq c(1-\tfrac{c-1}{2(n-1)})(n-1)! (n-c) \geq c(n-1)!(n-c) - O(c^2) (n-1)!,\]
using $c < n/2$.

Finally, it remains to bound \(e(\mathcal{M}, \mathcal{B} \setminus \mathcal{M})\) from below. To do this, note first that $B_i \setminus \mathcal{M}_i = (B_i \setminus \mathcal{N}_i) \setminus \cup_{j \neq i}B_j$ for each \(i\), and therefore
\[\{E(\mathcal{N}_i,(B_i \setminus \mathcal{N}_i) \setminus \cup_{j \neq i}B_j)):\ i \in [c]\}\]
are pairwise disjoint subsets of \(E(\mathcal{M},\mathcal{B} \setminus \mathcal{M})\). Observe that for each \(i\), we have
\[e(\mathcal{N}_i,B_i \cap B_j) \leq |\mathcal{N}_i|\quad \forall j \neq i,\]
since \(\mathcal{N}_i \cap B_j = \emptyset\) for each \(j \neq i\). Hence, we have
\[e(\mathcal{N}_i, B_i \cap \cup_{j\neq i} B_j) \leq (c-1)|\mathcal{N}_i|.\]
It follows that
\[e(\mathcal{N}_i,(B_i \setminus \mathcal{N}_i) \setminus \cup_{j \neq i}B_j) \geq e(\mathcal{N}_i,B_i \setminus \mathcal{N}_i) - (c-1)|\mathcal{N}_i|.\]
Note that \(T_n[B_i]\) is isomorphic to \(T_{n-1}\), and therefore we may apply Theorem \ref{thm:diaconis} in \(S_{n-1}\) to give:
\[e(\mathcal{N}_i,B_i \setminus \mathcal{N}_i) \geq \nu_i (n-1)! (1-\nu_i)(n-1).\]
We obtain
\[e(\mathcal{M}, \mathcal{B}\setminus \mathcal{M}) \geq \sum_{i=1}^{c} \nu_i (1-\nu_i) (n-1)(n-1)! - (c-1)(n-1)!\sum_{i=1}^{c} \nu_i.\]

Since \(|\mathcal{A}| = c(n-1)!\) and \(c(n-1)! - \binom{c}{2}(n-2)! \leq |\mathcal{B}| \leq c(n-1)!\), we have \(|\mathcal{E}| - \binom{c}{2}(n-2)! \leq |\mathcal{M}| \leq |\mathcal{E}|\). Since the \(\mathcal{N}_i\)'s are pairwise disjoint subsets of \(\mathcal{M}\), we have
\begin{equation}\label{eq:upperboundsum}\sum_{i=1}^{c}|\mathcal{N}_i| \leq |\mathcal{M}| \leq |\mathcal{E}|.\end{equation}
Note also that
\[\mathcal{M} \setminus \left(\bigcup_{i=1}^{c} \mathcal{N}_i\right) \subset \bigcup_{i\neq j} (B_i \cap B_j),\]
and therefore
\[\left|\mathcal{M} \setminus \left(\bigcup_{i=1}^{c} \mathcal{N}_i\right)\right| \leq \binom{c}{2} (n-2)!.\]
Hence, we have
\[\left|\bigcup_{i=1}^{c} \mathcal{N}_i\right| \geq |\mathcal{E}| - 2\binom{c}{2}(n-2)!,\]
and therefore
\begin{equation}\label{eq:lowerboundsum}\sum_{i=1}^{c} |\mathcal{N}_i| \geq |\mathcal{E}| - 2\binom{c}{2}(n-2)!.\end{equation}
Combining (\ref{eq:upperboundsum}) and (\ref{eq:lowerboundsum}) yields
\begin{equation}\label{eq:2sidedbound} \psi - 2\binom{c}{2}/(n-1) \leq \sum_{i=1}^{c} \nu_i \leq \psi.\end{equation}
Putting everything together, we obtain
\begin{align*} |\partial \mathcal{A}| & \geq c(n-1)!(n-c) - O(c^2)(n-1)! + \psi (n-1)!(n-\psi)- 2c\psi(n-1)!\\
& - (n-1)|\mathcal{M}| + \sum_{i=1}^{c} \nu_i (1-\nu_i) (n-1)(n-1)! - (c-1)(n-1)!\sum_{i=1}^{c} \nu_i\\
& \geq c(n-1)!(n-c) - O(c^2)(n-1)! + \psi (n-1)!(n-\psi)- 2c\psi(n-1)! \\
& - (n-1)\psi(n-1)! + \sum_{i=1}^{c} \nu_i (1-\nu_i) (n-1)(n-1)! - (c-1)(n-1)!\psi\\
& \geq c(n-1)!(n-c) - O(c^2)(n-1)! - \psi(n-1)!(3c+\psi-2) \\
& + (1-1/n) n!\sum_{i=1}^{c} \nu_i(1-\nu_i)\\
& \geq c(n-1)!(n-c) + (1-1/n) n!\psi(1-\psi) - O(c^2)(n-1)!\\
& - \psi(n-1)!(3c+\psi-2), \\
& \geq c(n-1)!(n-c-1) + (1-2/n) n!\psi(1-\psi) - O(c^2)(n-1)!,\\
\end{align*}
using \(\sum_{i=1}^{c} \nu_i \leq \psi < 1/3 \), and the fact that \(y \mapsto y(1-y)\) is concave for \(y \in [0,1]\).

Hence, we have
\begin{align*}c(n-1)!(n-c-1) + (1-2/n) n!\psi(1-\psi) - O(c^2)(n-1)!\\
\leq |\partial A| \leq c(n-1)!(n-c)+\delta n|\mathcal{A}| = c(n-1)!(n-c)+cn!\delta.\end{align*}
It follows that
\[ \psi(1-\psi) \leq \frac{c\delta + c/n + O(c^2/n)}{1-2/n} \leq 3c\delta + O(c^2/n),\]
provided $n \geq 3$. Solving for \(\psi\), we obtain
\begin{equation}\label{eq:option2}\psi \geq \tfrac{1}{2}(1+\sqrt{1-12c\delta}) - O(c^2/n),\end{equation}
or
\begin{equation}\label{eq:option1} \psi \leq \tfrac{1}{2}(1-\sqrt{1-12c\delta}) + O(c^2/n) \leq 6c\delta + O(c^2/n),\end{equation}
using the inequality $1 - \sqrt{1-x} \leq x$ for $x \in [0,1]$. Provided \(n = \Omega(c^2)\), \eqref{eq:option2} cannot hold (since \(\psi < 1/3\)), and therefore (\ref{eq:option1}) must hold. Hence,
$$|\mathcal{A} \Delta \mathcal{B}| = 2|\mathcal{A} \setminus \mathcal{B}| =2\psi (n-1)! \leq (12c \delta + O(c^2/n))(n-1)!,$$
proving the theorem.
\end{proof}

\section{Conclusion and open problems}
\label{sec:conc}
Note that the conclusion of Theorem \ref{thm:main} is non-trivial only when \(n\) is sufficiently large, and \(\delta\) sufficiently small, depending on \(c\). We believe these restrictions to be artefacts of our method of proof, and we conjecture the following strengthening of Theorem \ref{thm:main}.
\begin{conjecture}\label{conj:strengthening}
There exists an absolute constant \(C_0>0\) such that the following holds. Let \(\mathcal{A} \subset S_n\) with \(|\mathcal{A}| = c(n-1)!\), where \(0 \leq c \leq n\), and let \(f = {\bf{1}}_{\mathcal{A}} : S_n \to \{0,1\}\) be the characteristic function of \(\mathcal{A}\), so that \(\mathbb{E}[f] = c/n\). Let \(f_1\) denote orthogonal projection of \(f\) onto \(U_1 = U_{(n)} \oplus U_{(n-1,1)}\). If \(\mathbb{E}[(f-f_1)^2] \leq \epsilon c/n\), then there exists a Boolean function \(h\) such that
\[\mathbb{E}[(f-h)^2] \leq C_0 \epsilon c/n,\]
and \(h\) is the characteristic function of a union of \(\round{c}\) 1-cosets of \(S_n\). Furthermore, $|c-\round{c}| \leq C_0 \epsilon$.
\end{conjecture}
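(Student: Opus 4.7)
The natural approach is to extend the moment method used in the proof of Theorem~\ref{thm:main} by controlling the fourth moment of $h$ as well, in the spirit of the Friedgut--Kalai--Naor argument on $\{0,1\}^n$, and to handle large $c$ by passing to complements. The range $c > n/2$ reduces to $c \leq n/2$ via complementation: since $U_1$ contains the constants, $\mathbf{1}_{\mathcal{A}^c}$ sits at the same Euclidean distance from $U_1$ as $\mathbf{1}_\mathcal{A}$, and the complement of a disjoint union $\bigcup_{j \in J} T_{ij}$ of $(n-c)$ 1-cosets sharing one index is exactly $\bigcup_{j \in [n] \setminus J} T_{ij}$, again a disjoint union of $c$ 1-cosets. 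So the range extension is free, and the real content of the conjecture is the removal of the $\sqrt\epsilon$ and the $1/n$ from Theorem~\ref{thm:main}.

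Next I would develop an analogue of Lemma~\ref{lemma:h2h3} for $\mathbb{E}[h^4]$ by expanding
\[ h^4 = \sum_{i,j,k,l,p,q,r,s} b_{ij} b_{kl} b_{pq} b_{rs} \mathbf{1}_{T_{ij} \cap T_{kl} \cap T_{pq} \cap T_{rs}} \]
and using the row/column sum identities \eqref{eq:cbistochastic} repeatedly to reduce $\mathbb{E}[h^4]$ to an explicit linear combination of $\sum b_{ij}^4$, $\sum b_{ij}^3$ and $(\sum b_{ij}^2)^2$ with rational coefficients in $n$. Dually, I would bound $\mathbb{E}[g^4]$ from above using the two-value-and-convexity reduction of Lemma~\ref{lemma:eg3lowerbound} now applied to $y \mapsto y^4$: the extremiser subject to the first- and second-moment constraints is again piecewise constant, and the resulting estimate depends on $\epsilon$ \emph{linearly} rather than through $\sqrt\epsilon$. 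Combining this with Lemma~\ref{lemma:eg3lowerbound} and \eqref{eq:sumsquares} should yield the key improved inequality
\[ \sum_{i,j} b_{ij}^2 (1-b_{ij})^2 \leq O(\epsilon c), \]
which replaces the weaker \eqref{eq:skewness}; the squared factor is what delivers linear-in-$\epsilon$ control.

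With this inequality the final part of the argument mirrors the one in the paper. Letting $x_1 \geq \cdots \geq x_N$ be the sorted entries of $B$ and $m = |\{k : x_k \geq 1/2\}|$, the small entries satisfy $\sum_{k>m} x_k^2 \leq O(\epsilon c)$ and the large entries satisfy $\sum_{k\leq m}(1-x_k)^2 \leq O(\epsilon c)$; Cauchy--Schwarz then gives $\sum_{k\leq m}(1-x_k) \leq O(c\sqrt\epsilon)$, but the integrality of $m$ together with the exact identity $|\mathcal{A}| = c(n-1)!$ should upgrade this to $m = \round c$ and $|c - \round c| \leq O(\epsilon)$. The symmetric difference $|\mathcal{A} \bigtriangleup \bigcup_{l=1}^m T_{i_l j_l}|$ is then $O(\epsilon c)(n-1)!$, after accounting for the at most $\binom{m}{2}(n-2)!$ overlap among the $m$ cosets.

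The main obstacle I expect is the fourth-moment extremisation together with the disappearance of the additive $1/n$. The functional $\mathbb{E}[g^4]$ is not extremised at the same configuration that minimises $\mathbb{E}[g^3]$, so the two-value reduction has to be performed \emph{jointly} with the third-moment identity, effectively by solving a single linear programme in the parameters $(\theta, r, s, \delta)$ and arguing that the joint optimum is still attained by the Boolean function $F$ of Lemma~\ref{lemma:eg3lowerbound}. Moreover, the $1/n$ residue in Theorem~\ref{thm:main} arises because the reference values $F(n,c)$ and $G(n,c)$ themselves differ from their leading-order expressions by $O(c^2/n)$ and $O(c/n)$; the hope is that the extra fourth-moment identity supplies precisely the linear combination of the three moment identities needed to cancel these residues against one another. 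Implementing that cancellation cleanly, and converting the Cauchy--Schwarz step into the sharper integrality-based bound $|c - \round c| \leq O(\epsilon)$, are the genuinely delicate steps on which the conjecture stands or falls.
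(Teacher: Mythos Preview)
This statement is \emph{not} proved in the paper: it is explicitly listed as an open conjecture in Section~\ref{sec:conc}, intended as a strengthening of Theorem~\ref{thm:main} that the authors were unable to obtain. There is therefore no paper proof to compare against, and your proposal should be read as a programme for attacking an open problem, not as an alternative proof of a known result. You seem to be aware of this yourself, given the phrasing (``should yield'', ``the hope is'', ``the genuinely delicate steps on which the conjecture stands or falls'').

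That said, let me flag two concrete issues with the sketch. First, the complementation reduction is not free in the way you claim. You argue that the complement of a disjoint union $\bigcup_{j\in J}T_{ij}$ is again such a union; that is fine, but the conclusion of the conjecture concerns unions of $1$-cosets that need \emph{not} be disjoint, and the complement of a union of non-disjoint $1$-cosets is in general not a union of $1$-cosets at all. So knowing the conjecture for $\mathcal{A}^c$ (with $n-c\le n/2$) does not directly yield the desired conclusion for $\mathcal{A}$. Second, the fourth-moment route faces a structural obstacle beyond the ones you mention: unlike $\mathbb{E}[h^2]$ and $\mathbb{E}[h^3]$, the quantity $\mathbb{E}[h^4]$ does \emph{not} reduce to a single power sum $\sum_{i,j}b_{ij}^4$ via the row/column zero-sum identities; cross terms such as $(\sum_{i,j}b_{ij}^2)^2$ and products $\sum_i(\sum_j b_{ij}^2)^2$ survive, and these are of size $\Theta(c^2)$ rather than $\Theta(c)$. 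Controlling them well enough to isolate $\sum b_{ij}^2(1-b_{ij})^2$ with an $O(\epsilon c)$ error, while simultaneously cancelling the $O(c^2/n)$ residues coming from $F(n,c)$ and $G(n,c)$, is precisely the difficulty the authors could not overcome; your outline does not supply a mechanism for this beyond expressing the hope that the identities will conspire.
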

\noindent Note that this would be informative for all \(c \leq n\).

Likewise, we conjecture the following strengthening of Theorem \ref{thm:roughstability}.
\begin{conjecture}
There exists an absolute constant \(C_1>0\) such that the following holds. Let \(\mathcal{A} \subset S_n\) with \(|\mathcal{A}| = c(n-1)!\) for some $c \in \mathbb{N}$, and with
\[|\partial A| \leq \frac{|\mathcal{A}|(n!-|\mathcal{A}|)}{(n-1)!} + \delta n |\mathcal{A}|.\]
Then there exists a family \(\mathcal{B} \subset S_n\) such that \(\mathcal{B}\) is a union of \(c\) 1-cosets of \(S_n\), and
\[|\mathcal{A} \Delta \mathcal{B}| \leq C_1 c\delta (n-1)!.\]
\end{conjecture}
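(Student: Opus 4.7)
My approach parallels the proof of Theorem \ref{thm:roughstability}, with the preceding Conjecture \ref{conj:strengthening} substituted for Theorem \ref{thm:main}. Concretely, set $f = \mathbf{1}_{\mathcal{A}}$ and $f_1 = P_{U_1} f$. Lemma \ref{lemma:alonstability}, combined with the eigenvalue computation of Lemma \ref{lemma:evalues} ($\mu_2 = n$ and $\mu_M = 2n-2$), gives --- exactly as in Lemma \ref{lemma:close} --- that
$$
\|f - f_1\|_2^2 \leq \frac{\delta n}{n-2}\cdot\frac{c}{n} = (1 + O(1/n))\,\delta\cdot\frac{c}{n}.
$$
So the hypothesis of Conjecture \ref{conj:strengthening} holds with $\epsilon = (1 + O(1/n))\,\delta$. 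That conjecture then directly supplies a family $\mathcal{B} \subset S_n$ which is a union of $\round{c} = c$ $1$-cosets (using $c \in \mathbb{N}$ and $\epsilon$ small enough) with $\mathbb{E}[(f - \mathbf{1}_{\mathcal{B}})^2] \leq C_0\,\epsilon c/n$. Rescaling,
$$
|\mathcal{A} \Delta \mathcal{B}| \leq C_0\,\epsilon c\,(n-1)! \leq C_1\,c\delta\,(n-1)!,
$$
which is exactly the desired conclusion.

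Hence the entire burden reduces to proving Conjecture \ref{conj:strengthening}. This is the main obstacle. The spurious $1/n$ term in Theorem \ref{thm:main} enters in its combinatorial endgame: the approximating set $\mathcal{B}$ is built out of the $\round{c}$ pairs $(i_k, j_k)$ indexing the dominant entries of $B = (b_{ij})$, and the pairwise intersections $T_{i_k j_k} \cap T_{i_l j_l}$, each of size $\leq (n-2)!$, contribute $\binom{\round c}{2}(n-2)! = O(c^2)(n-2)!$ of irreducible slack. The current argument only controls $\sum b_{ij}^2$ and $\sum b_{ij}^3$, and these are already tight for the disjoint-coset model in~(\ref{eq:disjoint-cosets}), so any improvement would presumably need to exploit either a higher moment of the non-negative shift $g$ (its fourth moment is well-defined and controlled by the Boolean-ness of $f$), or to use Boolean-ness of $f$ in a local way --- e.g.\ once the candidate cosets have been identified, showing that on each overlap $T_{i_k j_k} \cap T_{i_l j_l}$ the function $f$ is essentially saturated to $1$ rather than contributing a genuine discrepancy.

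As a purely combinatorial fallback, one could try to sharpen the edge-boundary bootstrap in the proof of Theorem \ref{thm:roughstability} directly: its $O(c^2)(n-2)!$ slack originates from the lower bound $|\mathcal{B}| \geq c(n-1)! - \binom{c}{2}(n-2)!$ fed into Theorem \ref{thm:diaconis} for $|\partial \mathcal{B}|$, and from the bookkeeping estimate $|\mathcal{M} \setminus \bigcup_i \mathcal{N}_i| \leq \binom{c}{2}(n-2)!$. A tighter accounting of $|\partial\mathcal{B}|$ which recognises that points in $B_i \cap B_j$ already have many transposition-neighbours inside $\mathcal{B}$ could in principle absorb this slack, but the calculation looks considerably more delicate than the reduction via Conjecture \ref{conj:strengthening}.
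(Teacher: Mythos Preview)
The statement you are attempting is a \emph{conjecture} in the paper (the second open problem in Section~\ref{sec:conc}); the paper does not provide a proof, so there is nothing to compare against.

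Your proposal is not a proof but a reduction: you correctly observe that Conjecture~\ref{conj:strengthening} (the strengthened form of Theorem~\ref{thm:main}) would imply the present conjecture via the spectral argument of Lemma~\ref{lemma:close}. The reduction itself is sound --- indeed, since $c\in\mathbb{N}$ we have $\round{c}=c$ automatically, so the parenthetical ``and $\epsilon$ small enough'' is unnecessary, and large $\delta$ makes the conclusion vacuous anyway. But you yourself acknowledge that ``the entire burden reduces to proving Conjecture~\ref{conj:strengthening}'', and you do not prove it; the remainder of the proposal is a discussion of possible strategies (higher moments of $g$, local Boolean-ness on coset overlaps, sharper bookkeeping for $|\partial\mathcal{B}|$) without carrying any of them through.

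So the genuine gap is simply that the argument is incomplete: it rests on an unproved conjecture of comparable strength. Your diagnosis of where the $O(c^2/n)$ loss enters the existing proof of Theorem~\ref{thm:main} is accurate, but identifying the obstacle is not the same as removing it.
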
 

Ben Efraim's conjecture (Conjecture \ref{conj:benefraim}) remains one of the most natural open problems in the area. If this could be proved, it is likely that analogues of Theorem \ref{thm:roughstability} could be obtained for other set-sizes.

\subsubsection*{Acknowledgment} We wish to thank Gil Kalai for many useful conversations. We also wish to thank two anonymous referees for their careful reading of the paper and their helpful suggestions.


\begin{thebibliography}{99}
\bibitem{adfs} N. Alon, I. Dinur, E. Friedgut, B. Sudakov, `Graph products, Fourier analysis and spectral techniques', {\em Geometric and Functional Analysis} Volume 14 (2004), pp. 913--940.
\bibitem{alonmilman} N. Alon, V. D. Milman, `$\lambda_1$, isoperimetric inequalities for graphs, and superconcentrators', {\em Journal of Combinatorial Theory, Series B}, 38 (1985), pp. 73--88.
\bibitem{benefraim} L. Ben Efraim, Isoperimetric inequalities, Poincar{\'e} inequalities and concentration inequalities on graphs, Doctoral thesis, Hebrew University of Jerusalem, 2009.
\bibitem{bernstein} A. J. Bernstein, `Maximally connected arrays on the \textit{n}-cube', {\em SIAM Journal on Applied Mathematics} 15 (1967), pp. 1485--1489.
\bibitem{Bourgain} J. Bourgain, `On the distribution of the Fourier spectrum of boolean
functions', {\em Israel Journal of Mathematics} 131 (2002), pp. 269--276.
\bibitem{cameron} P.J. Cameron, C.Y. Ku, `Intersecting Families of Permutations', {\em European Journal of Combinatorics} 24 (2003) pp. 881--890.
\bibitem{dezafrankl} M. Deza, P. Frankl, `On the maximum number of permutations with given maximal or minimal distance', \emph{Journal of Combinatorial Theory, Series A} 22 (1977), pp. 352--360.
\bibitem{diaconis} P. Diaconis, M. Shahshahani, `Generating a random permutation with random transpositions', \emph{Z. Wahrsch. Verw. Gebeite}, Volume 57, Issue 2 (1981), pp. 159--179.
\bibitem{dodziuk} J. Dodziuk, `Difference equations, isoperimetric inequality and transience of certain random walks', {\em Transactions of the American Mathematical Society} 284 (1984), pp. 787--794.
\bibitem{cameronkuconj} D. Ellis, `A Proof of the Cameron-Ku Conjecture', {\em Journal of the London Mathematical Society} 85 (2012), pp. 165--190.
\bibitem{tstability} D. Ellis, Stability for \(t\)-intersecting families of permutations, {\em Journal of Combinatorial Theory, Series A} 118 (2011), pp. 208--227.
\bibitem{EFF} D. Ellis, Y. Filmus, E. Friedgut, `Triangle-intersecting families
  of graphs', {\em Journal of the European Mathematical Society} 14 (2012), pp. 841--885.
\bibitem{EFF2} D. Ellis, Y. Filmus, E. Friedgut, `A stability result for balanced dictatorships in $S_n$', {\em Random Structures and Algorithms} 46 (2015), pp. 494--530.
\bibitem{EFF3} D. Ellis, Y. Filmus, E. Friedgut, Low-degree Boolean functions on $S_n$, with an application to isoperimetry, submitted. arXiv:1511.08694.
\bibitem{tintersecting} D. Ellis, E. Friedgut and H. Pilpel, `Intersecting Families of Permutations', {\em Journal of the American Mathematical Society} 24 (2011), pp. 649--682.
\bibitem{tekr} P. Erd\H{o}s, C. Ko and R. Rado, `An Intersection Theorem for Systems of Finite Sets', {\em Quart. J. Math. Oxford, Ser. 2}, Volume 12 (1961), pp. 313--320.
\bibitem{F-note} Y. Filmus, A comment on `Intersecting Families of Permutations', manuscript, available at \url{http://www.cs.toronto.edu/~yuvalf/EFP-comment.pdf}.
\bibitem{FriedgutJunta} {E. Friedgut}, `Boolean Functions with Low Average Sensitivity Depend on Few Coordinates', {\em Combinatorica} 18 (1998), pp. 27--36.
\bibitem{fkn} E. Friedgut, G. Kalai, A. Naor, `Boolean functions whose Fourier transform is concentrated on the first two levels', {\em Advances in Applied Mathematics} 29 (2002), pp. 427--437.
\bibitem{GM} C. Godsil, K. Meagher, `A new proof of the Erd{\H{o}}s-Ko-Rado theorem for intersecting families of permutations', {\em European Journal of Combinatorics} 30 (2009), pp. 404--414.
\bibitem{harper} L. H. Harper, `Optimal assignments of numbers to vertices', {\em SIAM Journal on Applied Mathematics} 12 (1964), pp. 131--135.
\bibitem{hart} S. Hart, `A note on the edges of the \(n\)-cube', \emph{Discrete Mathematics} 14 (1976), pp. 157-163.
\bibitem{hatami} H. Hatami, M. Ghandehari, `Fourier analysis and large independent sets in powers of complete graphs', {\em Journal of Combinatorial Theory, Series B} 98 (2008), pp. 164--172.
\bibitem{hiltonmilner} A.J.W. Hilton, E.C. Milner, `Some intersection theorems for systems of finite sets', {\em Quart. J. Math. Oxford Series 2} 18 (1967), pp. 369--384.
\bibitem{hoffman} A. J. Hoffman, `On eigenvalues and colourings of graphs', \emph{Graph Theory and its Applications}, 1969.
\bibitem{KalaiArrow} G. Kalai, `A Fourier-Theoretic Perspective for the Condorcet Paradox and Arrow's theorem', {\em Advances in Applied Mathematics} 29 (2002), pp. 412--426
\bibitem{KindlerODonnell}  G. Kindler, R. O'Donnell, `Gaussian noise sensitivity and Fourier tails', 27th Annual Conference on Computational Complexity, 2012.
\bibitem{KindlerSafra} G. Kindler, S. Safra, `Noise resistant Boolean
functions are juntas', online manuscript, available at\\ \texttt{http://www.cs.huji.ac.il/~gkindler/papers/noise-stable-r-juntas.ps.}
\bibitem{larose} B. Larose and C. Malvenuto, `Stable sets of maximal size in Kneser-type graphs', {\em European Journal of Combinatorics} 25 (2004), pp. 657--673.
\bibitem{lindsey} J. H. Lindsey, II, `Assignment of numbers to vertices', {\em American Mathematical Monthly} 71 (1964), pp. 508--516.
\bibitem{NisanSzegedy} Noam Nisan and Mario Szegedy, `On the degree of Boolean functions as real polynomials', {\em Computational Complexity} 4 (1994), pp. 301--313.
\bibitem{renteln} P. Renteln, `On the Spectrum of the Derangement Graph', {\em Electronic Journal of Combinatorics} 14 (2007), R82.
\bibitem{serre} J.-P. Serre, {\em Linear Representations of Finite Groups}, Graduate Texts in Mathematics, Volume 42, Springer-Verlag. 
\bibitem{sagan} B. E. Sagan, \emph{The Symmetric Group: Representations, Combinatorial Algorithms and Symmetric Functions}, Springer-Verlag, New York, 1991. [2nd revised printing, 2001.]
\bibitem{wang} J. Wang, S. J. Zhang, `An Erd\H{o}s-Ko-Rado type theorem in Coxeter groups', {\em European Journal of Combinatorics} 29 (2008), pp. 1111--1115.
\end{thebibliography}
 \end{document}